\renewcommand{\vec}[1]{\boldsymbol{#1}}
\let\oldhat\hat
\renewcommand{\hat}[1]{\oldhat{\boldsymbol{#1}}}
\newcommand{\de}{\ensuremath{\partial}}						
\newcommand{\dee}{\ensuremath{\textrm{d}}}
\newcommand{\fdf}[1]{\ensuremath{ \frac{\dee}{\dee #1}}}
\newcommand{\inty}[4]{\ensuremath{ \int_{#1}^{#2} \! #3 \, \dee#4 }}
\newcommand{\field}[1]{\mathbb{#1}}
\newtheorem{assumption}{Assumption}
\newtheorem{remark}{Remark}
\newtheorem{theorem}{Theorem}
\newtheorem{lemma}{Lemma}
\newtheorem{corollary}{Corollary}
\numberwithin{lemma}{section}
\numberwithin{example}{section}
\numberwithin{proposition}{section}
\numberwithin{equation}{section}
\numberwithin{theorem}{section}
\numberwithin{corollary}{section}
\numberwithin{remark}{section}
\numberwithin{definition}{section}
\numberwithin{assumption}{section}
\numberwithin{figure}{section}
\newtheorem*{definition*}{Definition}		
\newtheorem*{assumption*}{Assumption}
\newtheorem*{remark*}{Remark}
\newtheorem*{theorem*}{Theorem}
\newtheorem*{lemma*}{Lemma}
\newtheorem*{proposition*}{Proposition}
\newtheorem*{corollary*}{Corollary}
\newtheorem*{example*}{Example}
\newtheorem*{conjecture*}{Conjecture}
\let\Re\relax
\DeclareMathOperator{\Re}{Re}
\let\Im\relax
\DeclareMathOperator{\Im}{Im}
\title{Defect resonances of truncated crystal structures}
\author{Jianfeng Lu, Jeremy L. Marzuola, and Alexander B. Watson}
\begin{document}

\begin{abstract}
Defects in the atomic structure of crystalline materials may spawn electronic bound states, known as \emph{defect states}, which decay rapidly away from the defect.  
Simplified models of defect states typically assume the defect is surrounded on all sides by an infinite perfectly crystalline material. In reality 
the surrounding structure must be finite, and in certain contexts the structure can be small enough that edge effects are significant. In this work we investigate these edge effects and prove the following result. Suppose that a one-dimensional infinite crystalline material hosting a positive energy defect state is truncated a distance $M$ from the defect. Then, for sufficiently large $M$, there exists a resonance \emph{exponentially close} (in $M$) to the bound state eigenvalue. It follows that the truncated structure hosts a metastable state with an exponentially long lifetime. Our methods allow both the resonance frequency and associated resonant state to be computed to all orders in $e^{-M}$. We expect this result to be of particular interest in the context of photonic crystals, where defect states are used for wave-guiding and structures are relatively small. Finally, under a mild additional assumption we prove that if the defect state has negative energy then the truncated structure hosts a bound state with exponentially-close energy. 
\end{abstract}

\maketitle

\tableofcontents

\section{Introduction}

\label{sec:intro}

In this work we seek to understand solutions of the following model partial differential equation 
\begin{equation} \label{eq:td_schro}
    i \de_t \psi = H \psi \quad H := D_x^2 + V(x), \quad D_x := - i \nabla_x.
\end{equation}
Here $\psi(x,t) : \field{R}^d \times \field{R} \rightarrow \field{C}$ is a complex function known as the wave-function, $H$ is the Hamiltonian, and $V(x)$ is a real function known as the potential. The PDE \eqref{eq:td_schro} arises in two application areas which are relevant to the present work. First, \eqref{eq:td_schro} models the wave-like dynamics of electrons in a $d$-dimensional material in the independent-electron approximation (see, e.g. \cite{ashcroft_mermin}). In this case, $\psi(x,t)$ denotes the wave-function of a single electron, and $V(x)$ denotes the electric potential due to the 
material environment. The other application area is in photonics, where the propagation of electromagnetic waves through media with spatially-varying refractive index can be modeled in the paraxial approximation by \eqref{eq:td_schro} (see, e.g. \cite{2000Agrawal}). In this case, $\psi(x,t)$ denotes the envelope of an electromagnetic wave-packet, and $V(x)$ is related to the refractive index of the medium.

In both of these application areas, there is considerable interest in building \emph{wave-guides}: structures which effectively confine waves to a given region or channel. The simplest way to confine waves is by building a structure which hosts a bound state, as we now briefly review. Suppose $H$ has a bound state, i.e. there exists $E \in \field{R}$ and $\phi(x) \in L^2(\field{R}^d)$ such that
\begin{equation}
    H \phi = E \phi.
\end{equation}
Then the solution of \eqref{eq:td_schro} with initial condition $\psi(x,0) = \phi(x)$ is 
\begin{equation}
    \psi(x,t) = e^{- i E t} \phi(x).
\end{equation}
Since the magnitude of the solution $|\psi(x,t)|^2 = | \phi(x) |^2$ is conserved with respect to time, and since $\phi(x)$ spatially decays, stationary solutions describe waves which remain confined for all $t > 0$.

A common strategy for creating a structure which hosts a bound state is to build a structure which is perfectly periodic except for a defect region. Bound states created by defects in an otherwise periodic structure are known as \emph{defect states}. Many previous works have proved existence of defect states across various models; see e.g. \cite{1976Simon_2,1986DeiftHempel,1993GesztesySimon,1997FigotinKlein,Figotin-Klein:98,Hoefer-Weinstein:11,2011BronskiRapti,duchene2015oscillatory,DVW:15}. For related numerical work, see \cite{2001FigotinGoren,2005Soussi}.
In recent years, ``topologically protected'' edge states, which decay away from the physical edge of a material, or away from extended line defects within a material, have also attracted attention for wave-guiding applications \cite{fefferman_leethorp_weinstein_memoirs,2017FeffermanLee-ThorpWeinstein_2,2018FeffermanWeinstein,2019Drouot_2,2018DrouotFeffermanWeinstein_pre,2019Lee-ThorpWeinsteinZhu,2019DrouotWeinstein,2020Drouot}. For related numerical work, see \cite{2018ThickeWatsonLu}. 

In general these works make the un-physical assumption that the periodic structure surrounding the defect, edge, or line defect, extends infinitely away from the defect. It follows that these models capture the dynamics of real wave-guides only approximately. In this work we investigate the validity of this approximation by investigating how bound states of infinite periodic structures supporting a defect state perturb when the structure is truncated, i.e. when the potential $V(x)$ is set equal to zero outside a bounded region.
Before we can state our result, we need to recall the basic theory of resonances (for more detail, see e.g. \cite{DyatlovZworski}). 

Resonances are poles of the resolvent $(H - z)^{-1}$ viewed as a function of $\lambda := \sqrt{z}$ and meromorphically continued to the lower half of the complex plane ($\Im \lambda < 0$). For our purposes, resonances are important because they control the rate of energy decay of solutions of \eqref{eq:td_schro} from bounded subsets of $\field{R}^d$. Specifically, suppose that $H$ has a resonance $z^*$ in the strip $0 < a \leq \Re z \leq b$ whose imaginary part is closer to the real axis than any other resonance in that strip. Let $\chi_R \in C^\infty_c$ denote a smooth cutoff which equals one on $B(0,R)$ for some $R > 0$, and let $\psi \in C^\infty_c$  denote a cutoff with $\text{supp } \psi = [a,b]$. Then one can prove a bound (roughly stated, see e.g. \cite{2001BurqZworski,DyatlovZworski} for more details)
\begin{equation} \label{eq:resonance_pole_exp}
\begin{split}
    \chi_R e^{- i t H} \chi_R \psi(H) = \; &\chi_R \text{Res}\left( e^{- i t \bullet} (H - \bullet)^{-1} , z^* \right) \chi_R \psi(H)     \\
    &+ \text{terms decaying faster in time.}
\end{split}
\end{equation}
It follows that the imaginary part of $z^*$ controls the rate of decay of solutions of \eqref{eq:td_schro} localized spatially in the ball $B(0,R)$ and spectrally in the interval $[a,b]$.

The main result proved in this work is as follows. We prove that when an infinite structure hosting a defect state with positive energy $E > 0$ is truncated sufficiently far from the defect and $d = 1$, the truncated structure hosts a resonance exponentially close (in the distance from the defect to the truncation) in the complex plane to $E$. Furthermore, our methods allow for precise estimates of the exponentially small corrections to all orders.

As a corollary we prove precise estimates on the rate of decay of waves localized at defects in truncations of infinite one-dimensional structures hosting defect states with positive energy $E > 0$. This result clarifies the validity of the un-physical approximation described above, where wave-trapping by finite structures is studied using models which are periodic away from the defect. For the precise statement of our main result in the simplest setting where reflection symmetry holds, see Section \ref{sec:main_result}. For the statement of our result in the general case where reflection symmetry may be broken, see Section \ref{sec:generalization_noparity}.

We hope that our result will inform the design of novel wave-guiding devices based on defect states. Another motivation and potential application of our result is to the design of efficient lasers. Lasers require the existence of a resonance very close to the real axis. Our work implies that truncations of infinite structures which host defect states with positive energy will lase efficiently if they are truncated sufficiently far from the defect. In recent years ``topological'' lasers, whose associated resonances arise because of edge modes, have been proposed and built \cite{2017St-Jeanetal,2018Zhaoetal,2018Partoetal,2018Hararietal,2018Bandresetal}.

By adapting our methods we can prove two further results. First, we prove that when the defect state has negative energy $E < 0$, under an additional assumption which we expect is generically satisfied (see \eqref{eq:extra} in the statement of Theorem \ref{th:main_theorem_bound}), the truncated structure also hosts a bound state whose eigenvalue is exponentially close to $E$ (see Section \ref{sec:trunc_bdstates}). Second, we prove that when semi-infinite structures hosting negative energy bound states at their edge (edge states) are truncated far from the edge, the truncated structure hosts a bound state whose eigenvalue is exponentially close to that of the original edge state eigenvalue (see Section \ref{sec:gen_edge}). The case where $E = 0$ is not immediately amenable to our methods; see Remark \ref{rem:E=0} for a discussion of this. 

The main ingredients of our proofs are the theory of ODE with periodic coefficients (Floquet theory) \cite{MagnusWinkler,Eastham} combined with a fixed point argument introduced by Dyatlov and Zworski (see Section 2.8.1 of \cite{DyatlovZworski}, where the argument is used to show that bound states of the harmonic oscillator become resonances when the potential is set to zero outside an interval). Although our proof does not generalize easily to $d > 1$ because it relies on ODE theory, we expect our result to also hold in that case and we make a general conjecture to this effect (see Section \ref{sec:conjecture}).

\subsection{Outline of paper}

The outline of the paper is as follows. We will start by presenting and proving our main result in the simplest case, where the structure is symmetric under reflection about the origin (parity symmetric). We will first present our result precisely in Section \ref{sec:main_result}, then present the main ideas of the proof in Sections \ref{sec:fixed} and \ref{sec:asymp}. We provide proofs of key estimates in Sections \ref{sec:bound_de_z_thet} and \ref{sec:bound_de_z_2_thet}. We derive a simplified expression for the leading-order part of the asymptotic expansion of the defect state resonance in Section \ref{sec:Theta}. We will then present our main result precisely in the case where reflection symmetry is broken in Section \ref{sec:generalization_noparity}, presenting the aspects of the proof which differ significantly from the reflection-symmetric case in Section \ref{sec:gen_no_parity}. We will finish by discussing two further results which can be proved using our methods and conjecturing generalizations of our results in Section \ref{sec:generalizations}.

\subsection{Related work} 

In this section we discuss existing literature related to scattering resonances of periodic structures in one dimension. Note that we already discussed the existing literature on bound states of perturbed periodic structures in Section~\ref{sec:intro}.

The scattering theory of discrete and continuum periodic structures on the whole or half line perturbed by local perturbations is well-studied, see e.g. \cite{1987Firsova,2011Korotyaev,2012KorotyaevSchmidt,2002DimassiZerzeri,2004Dimassi,dimassi2011spectral,2011IantchenkoKorotyaev,2012IantchenkoKorotyaev}. Regarding \emph{truncated} periodic structures, Klopp \cite{2012Klopp,2015Trinh,2015Trinh_2,klopp2016} and Trinh \cite{2015Trinh,2015Trinh_2} have studied the resonances of truncated periodic structures on the whole or half-line without defects. Duch\^ene-Vuki\'cevi\'c-Weinstein \cite{DVW:15,duchene2015oscillatory,duchene2014scattering,2011DucheneWeinstein} and Drouot \cite{2018Drouot} have studied the scattering, homogenization, and defect modes of highly oscillatory compactly supported potentials, which are equivalent to truncated periodic structures after a re-scaling. Christiansen \cite{2005Christiansen} has studied the resonances of ``steplike'' potentials which are asymptotically equal to constants with different values at large positive and negative $x$. Wave packet propagation in truncated periodic structures has been studied by Molchanov-Vainberg \cite{molchanov2004slowdown,molchanov2006slowing,molchanov2005slowing}.

In this work, we treat the previously unstudied (as far as we are aware) case of truncated periodic Schr\"odinger operators with a defect.  Our approach is motivated by the study of resonances for the semiclassical resolvent in the case of truncated trapping potentials, see \cite{DyatlovZworski}, Chapter $2.8$.  When the defect is such that a gap eigenstate arises in the infinite periodic model, this problem requires the use of transfer matrix machinery/Floquet theory tools to prove that the truncated model has a resonance very near the real axis at a nearby energy.  Our approach gives strong quantitative bounds on the expected lifespan of these states established a general framework for establishing resonances from defect states in $1$ dimensional models.

\subsection*{Acknowledgements} JL is supported in part by National Science Foundation via grant DMS-1454939. JLM is supported in part by NSF CAREER Grant DMS-1352353 and NSF Applied Math Grant DMS-1909035.  JLM also thanks Duke University and MSRI for hosting him during the outset of this research project. The authors would like to thank Mikael Rechtsmann, Semyon Dyatlov, Alexis Drouot, Dirk Hundertmark and Michael I. Weinstein for stimulating discussions which improved this work. 

\section{Statement of main result when parity symmetry holds} \label{sec:main_result}

We now present our main result in the simplest setting. We consider the one-dimensional continuum Schr\"odinger operator 
\begin{equation} \label{eq:Schro}
    H = D_x^2 + V(x) \quad D_x := - i \de_x,
\end{equation}
where $V$, the potential, is a real function. We start with the following regularity and symmetry assumptions on $V$. 
\begin{assumption}[Regularity and translation symmetry of $V$] \label{as:V_assump}
First, we assume $V$ is smooth: $V(x) \in C^\infty(\field{R})$. Then, we assume that $V$ can be written as
\begin{equation}
    V(x) = V_{\rm per}(x) + V_{\rm def}(x),
\end{equation}
where $V_{\rm per}$ is periodic, i.e. $V_{\rm per}(x + 1) = V_{\rm per}(x)$ for all $x \in \field{R}$, and $V_{\rm def}(x)$ is compactly supported, i.e. there exists $\rho \geq 0$ such that
\begin{equation} \label{eq:eventual_per}
    |x| > \rho \implies V_{\rm def}(x) = 0.
\end{equation}
\end{assumption}

\begin{remark}
The assumption that $V_{\rm per}$ has periodicity $1$ is made without loss of generality up to a possible re-scaling of $x$. 
\end{remark}

\begin{remark}
The eventual periodicity assumption \eqref{eq:eventual_per} is essential to our proof because it allows us to apply Floquet theory. Even so, we expect the result to still hold if this assumption is replaced by a weaker spectral gap condition: see Section~\ref{sec:conjecture}. 
\end{remark}

Now suppose that $H$ has a bound state with positive energy, i.e. that there exist $E > 0$ and $\Phi(x) \in L^2(\field{R})$ such that
\begin{equation} \label{eq:bound}
    H \Phi(x) = E \Phi(x).
\end{equation}
Our aim is to prove that when the structure modeled by $H$ is truncated sufficiently far from $x = 0$, the resulting structure supports a resonance. Specifically, we aim to prove that the operator
\begin{equation} \label{eq:H_trunc}
    H_{\rm trunc} := D_x^2 + V_{\rm trunc}(x),
\end{equation}
where
\begin{equation} \label{eq:V_trunc}
    V_{\rm trunc}(x) = \begin{cases} V(x), & |x| \leq M \\ 0, & |x| > M, \end{cases}
\end{equation}
acting on $L^2(\field{R})$ 
has a resonance $z^* \in \field{C}$ with $\text{Re }z > 0, \text{Im }z < 0$ nearby to $E$ in the complex plane for $M$ sufficiently large (in particular, such that $M > \rho$).

We will present our main result first under the following simplifying symmetry assumption. We emphasize that our techniques do not fundamentally rely on this assumption, although it simplifies the statement of our result and its proof. We discuss how our result generalizes to the case where Assumption \ref{as:parity} doesn't hold in Section \ref{sec:generalization_noparity}.
\begin{assumption}[Parity symmetry of $V$] \label{as:parity}
We assume that $V(x)$ is even, i.e.
\begin{equation} \label{eq:even}
    V(-x) = V(x)
\end{equation}
for all $x \in \field{R}$.
\end{assumption} 

To present our result we require some elementary calculations and some notation. First note that by Floquet theory (we recap the aspects of Floquet theory which are important for us in Section \ref{sec:bound_de_z_thet}), the ODE \eqref{eq:bound} can have at most one decaying solution for $|x| > \rho$, and hence the eigenvalue $E$ is non-degenerate. It then follows from even-ness of the potential $V(x)$ that $\Phi(x)$ is either even or odd, i.e.
\begin{equation}
    \Phi(-x) = \Phi(x) \text{ or } \Phi(-x) = - \Phi(x) \text{ for all $x \in \field{R}$.}
\end{equation}

When $\Phi$ is even, 
define $u_z(x) \in C^\infty([0,M])$ to be the solution of
\begin{equation} \label{eq:eq}
    ( D_x^2 + V(x) - z ) u_z = 0, \quad u_z(0) = 1, u_z'(0) = 0
\end{equation}
for arbitrary $z \in \field{C}$. When $\Phi$ is odd, we make the same definitions except the initial data for \eqref{eq:eq} should be changed to $u_z(0) = 0, u_z'(0) = 1$. Note that in either case $\Phi$ satisfies \eqref{eq:eq} when $z = E$. We define
\begin{equation} \label{eq:X}
    (X_1(z) , X_2(z)) := ( u_z(M) , u_z'(M) ),
\end{equation}
and
\begin{equation} \label{eq:theta}
    \Theta(z) := X_2(z) - i \sqrt{z} X_1(z).
\end{equation}
For the square root in \eqref{eq:theta}, 
we assume $z \in \field{C} \setminus (-\infty,0]$ and choose the branch of the square root such that $\pm \Im z > 0 \implies \pm \Im \sqrt{z} > 0$. With these definitions, $z^* \in \field{C}$ with $\Re z^* > 0$ and $\Im z^* < 0$ is a resonance whenever $\Theta(z^*) = 0$.

The main theorem underlying our results is the following:
\begin{theorem} \label{th:main_theorem}
Let $V(x)$ satisfy Assumptions \ref{as:V_assump} and \ref{as:parity}, and let $\Phi$ be a bound state with positive eigenvalue $E > 0$ as in \eqref{eq:bound}. Let $\rho > 0$ be as in equation \eqref{eq:eventual_per}. Then there exist $k > 0$ and an $M_0 > \rho \geq 0$ such that for all $M \geq M_0$,
\begin{enumerate}
\item $\Theta(z)$ defined by \eqref{eq:theta} has a unique root $z^*$ in the ball 
\begin{equation} \label{eq:Om_M}
    \Omega_M = \left\{ z : |z - E| \leq \frac{1}{M^2} e^{- k M} \right\}.
\end{equation}
\item The location of the root $z^*$ can be precisely characterized as 
\begin{equation} \label{eq:asymp}
    z^* = E - \frac{ \Theta(E) }{ \de_z \Theta(E) } + O(e^{- 4 k M}).
\end{equation}
\end{enumerate}
\end{theorem}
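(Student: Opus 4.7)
The plan is to set up a Newton-type fixed point iteration for $\Theta(z) = 0$ near $z = E$, adapting the strategy Dyatlov and Zworski use for resonances of truncated potentials (Section 2.8.1 of \cite{DyatlovZworski}). The guiding heuristic is that $\Theta(E)$ is exponentially small in $M$ while $|\de_z \Theta(E)|$ is exponentially \emph{large}, so that a map of the form $G(z) := z - \Theta(z)/\de_z\Theta(E)$ contracts very rapidly in a small ball around $E$. The first key observation is that $u_E(x)$ coincides, up to a multiplicative constant, with the bound state $\Phi(x)$ on $[0,M]$: both solve the same ODE, and the initial data in \eqref{eq:eq} are precisely those dictated by the parity of $\Phi$. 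Since $M > \rho$, the bound state satisfies the purely periodic Hill equation on $[\rho,\infty)$, and $L^2$-integrability forces it to coincide there with the decaying Floquet solution, of the form $e^{-\alpha x} p(x)$ for some Floquet exponent $\alpha > 0$ and $1$-periodic $p$. Consequently $\Theta(E) = \Phi'(M) - i\sqrt{E}\,\Phi(M) = O(e^{-\alpha M})$ up to the normalization constant.

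The derivative estimates, carried out in Sections \ref{sec:bound_de_z_thet} and \ref{sec:bound_de_z_2_thet}, come from differentiating \eqref{eq:eq} in $z$: $\de_z u_z$ and $\de_z^2 u_z$ satisfy inhomogeneous Hill-type equations with zero initial data, and variation of parameters against the decaying $\Phi$ and a growing Floquet solution $\Psi \sim e^{\alpha x}$ at energy $E$ yields bounds of the form
\begin{equation*}
    |\de_z \Theta(E)| \gtrsim M\, e^{\alpha M}, \qquad \sup_{z \in \Omega_M} |\de_z^2 \Theta(z)| \lesssim M^2\, e^{\alpha M}.
\end{equation*}
Any fixed $0 < k < \alpha$ then meets the requirements of the theorem. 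Given these estimates the contraction argument is immediate: $G'(E) = 0$ and $|G'(z)| \leq |\de_z^2\Theta(\xi)|\,|z-E|/|\de_z\Theta(E)| \lesssim M|z-E|$ on $\Omega_M$, while $|G(E) - E| = O(M^{-1} e^{-2\alpha M})$. Both quantities are much smaller than the radius $M^{-2}e^{-kM}$ of $\Omega_M$, so $G$ (holomorphic by the smooth $z$-dependence of $u_z(M)$) maps $\Omega_M$ into itself and is a strict contraction, producing a unique fixed point $z^* \in \Omega_M$ via the Banach fixed point theorem. Taylor expanding $0 = \Theta(z^*) = \Theta(E) + (z^*-E)\de_z\Theta(E) + \tfrac{1}{2}\de_z^2\Theta(\xi)(z^*-E)^2$ and solving for $z^* - E$ gives \eqref{eq:asymp}; since $|z^*-E| = O(M^{-1}e^{-2\alpha M})$, the quadratic remainder is of order $M^{-1}\, e^{-4\alpha M} = O(e^{-4kM})$ for any $k < \alpha$.

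The main technical obstacle is the \emph{lower} bound $|\de_z \Theta(E)| \gtrsim Me^{\alpha M}$. A priori, $\de_z u_z(M)$ at $z = E$ could be dominated by its decaying-Floquet component, or the growing-Floquet contributions to $\de_z X_1(E)$ and $\de_z X_2(E)$ could conspire to cancel in the combination $\de_z X_2(E) - i\sqrt{E}\,\de_z X_1(E)$ defining $\de_z\Theta(E)$. Ruling this out is the content of the Floquet computations in Sections \ref{sec:bound_de_z_thet} and \ref{sec:Theta}: variation of parameters shows the growing-Floquet coefficient of $\de_z u_z(M)$ to be proportional to $\int_0^M \Phi(x)^2\,\dee x$, which is of order $M$ because $\Phi$ is concentrated near the defect and $L^2$-normalized. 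The $-i\sqrt{z}$ factor in $\Theta$ then combines with the Floquet data in a way that precludes cancellation, and it is here that the choice of square-root branch (equivalently, the ``resonance'' rather than ``bound state'' nature of the problem) enters decisively. Parity streamlines this check; the non-symmetric case in Section \ref{sec:gen_no_parity} requires the analogous analysis run simultaneously for even and odd components.
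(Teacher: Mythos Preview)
Your proposal is correct and follows essentially the same fixed-point strategy as the paper (Sections \ref{sec:fixed}--\ref{sec:asymp}): the same Newton map $z\mapsto z-\Theta(z)/\de_z\Theta(E)$, the same trio of estimates on $\Theta(E)$, $\de_z\Theta(E)$, and $\de_z^2\Theta$, and the same mechanism (reality of $\de_z u_E(M)$, $\de_z u'_E(M)$ combined with the purely imaginary factor $i\sqrt{E}$) for the crucial lower bound. One slip: $\int_0^M \Phi(x)^2\,\dee x$ is $O(1)$, not $O(M)$, since $\Phi\in L^2$---what actually matters is that it is bounded \emph{below} by a positive constant (see \eqref{eq:simpp})---and accordingly the paper's estimates (Lemmas \ref{lem:bound_de_z_thet}--\ref{lem:bound_de_z_2_thet}, proved via Picard iteration rather than crude variation-of-parameters bounds) carry no polynomial prefactors, so one may take $k$ equal to the Floquet exponent itself rather than any $k<\alpha$.
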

We explain the main ideas of the proof of Theorem \ref{th:main_theorem} in Sections \ref{sec:fixed} and \ref{sec:asymp}, postponing proofs of key estimates to Sections \ref{sec:bound_de_z_thet} and \ref{sec:bound_de_z_2_thet}. The overall idea of the proof is to adapt a fixed point argument introduced by Dyatlov and Zworski, who used it to prove an analogous result to Theorem \ref{th:main_theorem} for the truncated harmonic oscillator in the semiclassical limit (see Chapter 2.8.1 of \cite{DyatlovZworski}). In our setting, largeness of the truncation length $M$ can be understood as playing the role of smallness of the semiclassical parameter in Dyatlov and Zworski's proof. The most difficult step in the proof is to prove a particular map $\Psi : \field{C} \rightarrow \field{C}$ is a contraction in the ball $\Omega_M$ \eqref{eq:Om_M}. The existence part of Theorem \ref{th:main_theorem} (part (1)) then follows from the Banach fixed point theorem. The asymptotic formula for the resonance (part (2) of Theorem \ref{th:main_theorem}) using the formula for the fixed point of $\Psi$ as the limit $\lim_{n \rightarrow \infty} \Psi^n(z)$ for any $z \in \Omega$. The proof that $\Psi$ is a contraction requires precise estimates on solutions of \eqref{eq:Schro} which we obtain using ODE methods (specifically, Floquet theory). As a result, our proof does not generalize in a straightforward way to higher dimensions.

\begin{remark}
For $x \geq \rho$, the ODE \eqref{eq:eq} has periodic coefficients. Since by assumption the structure modeled by \eqref{eq:eq} has a bound state with energy $E$, Floquet theory implies the ODE must have real characteristic multipliers whose product is $1$. The $k > 0$ which appears in Theorem \ref{th:main_theorem} is simply the logarithm of the larger of these characteristic multipliers. For more details, see Section \ref{sec:bound_de_z_thet}.
\end{remark}

The asymptotic formula for the root \eqref{eq:asymp} is not very explicit. We can derive a more explicit formula as follows. When $\Phi$ is even, we define $v_z(x) \in C^\infty([0,M])$ to be the solution of
\begin{equation} \label{eq:eq_v}
    ( D_x^2 + V(x) - z ) v_z = 0, \quad v_z(0) = 0, v_z'(0) = 1
\end{equation}
for arbitrary $z \in \field{C}$. When $\Phi$ is odd, we make the same definitions except the initial data for \eqref{eq:eq_v} should be changed to $v_z(0) = -1, v_z'(0) = 0$. In either case $u_z(x)$ \eqref{eq:eq} and $v_z(x)$ form a fundamental solution set of the ODE (with Wronskian $u_z v_z' - u_z' v_z = 1$) appearing in \eqref{eq:eq_v}. We then have the following formula for the first-order correction appearing in \eqref{eq:asymp}:

\begin{corollary} \label{cor:imag_asymp}
The first correction term appearing in \eqref{eq:asymp} satisfies 
\begin{equation} \label{eq:correction}
    \frac{ \Theta(E) }{ \de_z \Theta(E) } = \frac{ i \sqrt{E} - \left[ u_E'(M) v_E'(M) + E u_E(M) v_E(M) \right] }{ \left[ ( v_E'(M) )^2 + E ( v_E(M) )^2 \right] \inty{0}{M}{ u_E^2 (y) }{y} } + O(M e^{- 4 k M}),
\end{equation}
\end{corollary}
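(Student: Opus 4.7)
The plan is to compute $\Theta(E)$ and $\de_z \Theta(E)$ explicitly in terms of $u_E,v_E$ evaluated at $M$, isolate the dominant contribution using Floquet-theoretic size estimates, and then rationalize the leading quotient using the Wronskian identity $u_E v_E' - u_E' v_E = 1$.

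First, from $\Theta(z) = u_z'(M) - i\sqrt{z}\,u_z(M)$, differentiating in $z$ yields
$$\de_z\Theta(z) = \de_z u_z'(M) - i\sqrt{z}\,\de_z u_z(M) - \tfrac{i}{2\sqrt{z}}\,u_z(M).$$
Let $w(x) := \de_z u_z(x)|_{z=E}$. Differentiating the defining ODE \eqref{eq:eq} and its initial conditions in $z$ shows $w$ solves the inhomogeneous problem $(D_x^2 + V - E)w = u_E$ with $w(0) = w'(0) = 0$. Applying variation of parameters with the fundamental pair $(u_E,v_E)$, whose Wronskian is $1$ in both the even and odd parity cases by a direct check from the initial data, gives
$$w(M) = u_E(M)\int_0^M u_E v_E\,dy - v_E(M)\int_0^M u_E^2\,dy, \quad w'(M) = u_E'(M)\int_0^M u_E v_E\,dy - v_E'(M)\int_0^M u_E^2\,dy.$$
Substituting into the expression for $\de_z\Theta(E)$ and grouping the $u_E(M), u_E'(M)$ factors back into $\Theta(E)$ yields
$$\de_z\Theta(E) = \Theta(E)\int_0^M u_E v_E\,dy - [v_E'(M) - i\sqrt{E}\,v_E(M)]\int_0^M u_E^2\,dy - \tfrac{i}{2\sqrt{E}}\,u_E(M).$$

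Next I invoke the Floquet size estimates from Section \ref{sec:bound_de_z_thet}. Matching of parities and one-dimensionality of the decaying subspace of the periodic ODE in a spectral gap imply that on $[0,M]$, $u_E$ is a nonzero scalar multiple of the bound state $\Phi$, so $u_E(M), u_E'(M) = O(e^{-kM})$ and hence $\Theta(E) = O(e^{-kM})$. Linear independence forces $v_E$ to carry the growing Floquet mode, so $v_E(M), v_E'(M) = O(e^{kM})$, while the product $u_E v_E$ is bounded and thus $\int_0^M u_E v_E\,dy = O(M)$. Of the three terms in the displayed expression for $\de_z\Theta(E)$, the middle one has magnitude $O(e^{kM})$ and the other two are smaller by a relative factor $O(M e^{-2kM})$.

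Finally I rationalize the leading quotient by multiplying numerator and denominator by $v_E'(M) + i\sqrt{E}\,v_E(M)$. Expanding the numerator as
$$[u_E'(M) - i\sqrt{E}\,u_E(M)][v_E'(M) + i\sqrt{E}\,v_E(M)] = u_E'(M)v_E'(M) + E\,u_E(M)v_E(M) - i\sqrt{E}\,(u_E v_E' - u_E' v_E)\big|_M,$$
and applying the Wronskian identity to the last parenthesis produces $u_E'(M)v_E'(M) + E\,u_E(M)v_E(M) - i\sqrt{E}$. The denominator becomes $-[(v_E'(M))^2 + E\,(v_E(M))^2]\int_0^M u_E^2\,dy$, and combining signs yields the stated formula. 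The leading quotient itself is of size $O(e^{-2kM})$, so the $O(M e^{-2kM})$ relative error from the two discarded terms produces the claimed $O(M e^{-4kM})$ absolute remainder. The main technical point is the sharp growth estimate for $v_E(M)$ — the growing Floquet mode cannot drop out of $v_E$, for otherwise $v_E \propto u_E$ and the Wronskian would vanish.
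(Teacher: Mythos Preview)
Your proof is correct and follows essentially the same approach as the paper: variation of parameters to express $\de_z u_z(M)$ and $\de_z u_z'(M)$ in terms of $u_E,v_E$, isolation of the dominant $v_E$ contribution via the Floquet growth/decay estimates, and rationalization of the resulting quotient using the Wronskian identity. Your regrouping of $\de_z\Theta(E)$ as $\Theta(E)\int_0^M u_E v_E\,dy$ plus the dominant term plus a small remainder is a slightly cleaner bookkeeping than the paper's, but the argument is the same.
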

\begin{proof}
The proof is a straightforward manipulation. See Section \ref{sec:Theta} for details.
\end{proof}

The consequences of Theorem \ref{th:main_theorem} and Corollary \ref{cor:imag_asymp} can now be summarized as follows:
\begin{corollary} \label{cor:summary}
Let $V, \Phi$, $E > 0$, $\rho > 0$, $u_z$, and $v_z$ be as in Theorem \ref{th:main_theorem} and Corollary \ref{cor:imag_asymp}, and let $H_{\rm trunc}$ be as in \eqref{eq:H_trunc}. Then there exists $k > 0$ and an $M_0 > \rho \geq 0$ such that for all $M \geq M_0$,
$H_{\rm trunc}$ has a resonance $z^*$ such that 
\begin{equation} \label{eq:res_pol}
\begin{split}
    \text{\emph{Re} } z^* &= E + \frac{ u_E'(M) v_E'(M) + E u_E(M) v_E(M) }{ \left[ ( v_E'(M) )^2 + E ( v_E(M) )^2 \right] \inty{0}{M}{ u_E^2 (y) }{y} } + O(M e^{- 4 k M})     \\
    \text{\emph{Im} } z^* &= \frac{ - \sqrt{E} }{ \left[ ( v_E'(M) )^2 + E ( v_E(M) )^2 \right] \inty{0}{M}{ u_E^2 (y) }{y} } + O(M e^{- 4 k M}).     \\
\end{split}
\end{equation}
The associated resonant state $\Phi^*(x)$ equals 
\begin{equation}
    \Phi^*(x) = \begin{cases} u_{z^*}(x), & 0 \leq x \leq M \\ u_{z^*}(M) e^{i \sqrt{z^*} (x - M)}, & \phantom{|}x\phantom{|} > M \end{cases}
\end{equation}
for $x \geq 0$. When $\Phi(x)$ is even (resp. odd), $\Phi^*(x)$ is even (resp. odd).
\end{corollary}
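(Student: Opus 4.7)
The plan is to deduce this corollary directly by combining Theorem \ref{th:main_theorem} and Corollary \ref{cor:imag_asymp} with the definition of $\Theta(z)$ and the outgoing boundary conditions that characterize resonances. First, I would verify that each root of $\Theta(z)$ produced by Theorem \ref{th:main_theorem} corresponds to a genuine resonance of $H_{\rm trunc}$. For $|x| > M$ the equation $(D_x^2 - z) u = 0$ has the outgoing branch $e^{i \sqrt{z} x}$ on $x > M$ (and its mirror $e^{-i \sqrt{z} x}$ on $x < -M$, used for the parity extension). Requiring the interior solution $u_z(x)$ on $[0,M]$ to match $A e^{i \sqrt{z} (x-M)}$ in value and derivative at $x=M$ gives $A = X_1(z)$ and $i \sqrt{z} A = X_2(z)$, so the matching condition is precisely $\Theta(z) = X_2(z) - i \sqrt{z} X_1(z) = 0$. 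Hence the unique root $z^* \in \Omega_M$ produced by Theorem \ref{th:main_theorem} is a resonance with $\Re z^* > 0$ and $\Im z^* < 0$.

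Next, I would substitute the explicit formula \eqref{eq:correction} for $\Theta(E)/\de_z \Theta(E)$ into the asymptotic expansion \eqref{eq:asymp} and split the result into real and imaginary parts. The key observation is that $u_E(x)$ and $v_E(x)$ are real-valued (they solve a real ODE with real initial data at real energy $z=E$), so all the quantities $u_E(M)$, $u_E'(M)$, $v_E(M)$, $v_E'(M)$, and $\int_0^M u_E^2(y)\,\dee y$ are real, and the denominator is in fact positive. The only imaginary contribution in \eqref{eq:correction} is the explicit $i \sqrt{E}$ in the numerator. Taking real and imaginary parts of \eqref{eq:asymp} and absorbing the error terms of size $O(M e^{-4kM})$ then yields exactly the two expressions in \eqref{eq:res_pol}, with the correct sign on $\Im z^*$ (negative, since the denominator is positive and $\sqrt{E} > 0$).

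For the resonant state, I would define $\Phi^*(x) := u_{z^*}(x)$ on $[0,M]$, where by construction it solves $(D_x^2 + V(x) - z^*) u = 0$. On $x > M$, set $\Phi^*(x) := u_{z^*}(M)\, e^{i \sqrt{z^*}(x-M)}$, which solves $(D_x^2 - z^*) u = 0$ since $V_{\rm trunc}$ vanishes there. The condition $\Theta(z^*) = 0$ is exactly what ensures $C^1$ matching of these two pieces at $x = M$. The extension to $x < 0$ is then dictated by the parity of the interior initial data for $u_z$: the choice $(u_z(0), u_z'(0)) = (1,0)$ in the even case and $(0,1)$ in the odd case guarantees that $u_{z^*}$ has the same parity as $\Phi$ on $[-M,M]$, and the symmetric outgoing extension on $|x| > M$ (using $e^{-i \sqrt{z^*}(x+M)}$ on $x < -M$) preserves that parity globally.

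The main obstacle here is essentially bookkeeping rather than analysis: almost all the difficulty has already been absorbed into Theorem \ref{th:main_theorem} (existence, uniqueness, contraction argument) and Corollary \ref{cor:imag_asymp} (the explicit first-order correction). The only genuine point to be checked carefully is that the $O(e^{-4kM})$ error in \eqref{eq:asymp}, once multiplied through by $\de_z \Theta(E)$ and combined with the $O(M e^{-4kM})$ error of \eqref{eq:correction}, produces purely $O(M e^{-4kM})$ corrections separately in the real and imaginary parts, with no hidden contributions of larger order masquerading as part of the error. Given that the leading-order term of \eqref{eq:correction} is explicit and real apart from the single $i \sqrt{E}$ term, this verification is direct.
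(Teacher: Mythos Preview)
Your proposal is correct and matches the paper's implicit approach: the paper presents Corollary~\ref{cor:summary} without proof, simply as a direct summary of Theorem~\ref{th:main_theorem} and Corollary~\ref{cor:imag_asymp}, and you have correctly spelled out the straightforward bookkeeping (real/imaginary splitting via the reality of $u_E,v_E$, outgoing matching at $x=M$, and parity extension) that this entails. One minor quibble: in your last paragraph the phrase ``multiplied through by $\partial_z\Theta(E)$'' is not quite what happens---you simply add the $O(e^{-4kM})$ error from \eqref{eq:asymp} to the $O(Me^{-4kM})$ error from \eqref{eq:correction}, and the latter dominates---but this does not affect the argument.
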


\begin{remark}
As $M \to \infty$ the resonance pole \eqref{eq:res_pol} will converge to the defect state eigenvalue, which explains why one observes that $\Omega_M$ decreases as $M \to \infty$.  In general, we suspect also that the transmission and reflection coefficients for plane wave scattering in the truncated problem converges to demonstrate the bands and the gaps of the model continuous spectrum in the infinite system (meaning the transmission coefficient should converge to $0$ where there are gaps in the fully periodic model spectrum).  Near energy $E$, the $M \to \infty$ model will have a gap, but, due to the existence of this resonance, convergence in that region may be non-uniform as observed in the related results \cite{DVW:15,duchene2015oscillatory}.  Questions of solutions for the transmission/reflection coefficients are related to solving for the so-called ``distorted Fourier Transform'' for all positive energies $E = k^2$ in a truncated lattice model with a defect state.  We leave computing these scattering state solutions to future works. 
\end{remark}

\section{Fixed-point argument} \label{sec:fixed}

In this section we will show how existence of a resonance nearby in the complex plane to the bound state eigenvalue $E$ can be deduced from a fixed point argument. Recall the definitions \eqref{eq:eq}-\eqref{eq:theta} of $u_z(x)$ and $\Theta(z)$.  Assuming that $\de_z \Theta(E) \neq 0$ (we will verify this below, see \eqref{eq:bound_de_z_thet}), $\Theta(z^*) = 0$ is equivalent to
\begin{equation} \label{eq:psi_z}
    \Psi(z^*) = z^*, \text{ where } \Psi(z) := z - \frac{ \Theta(z) }{ \de_z \Theta(E) }.
\end{equation}
In other words, $z^*$ is a resonance if and only if $z^*$ is a fixed point of the map $\Psi : \field{C} \rightarrow \field{C}$ defined by \eqref{eq:psi_z}. 

We will prove the existence of a fixed point nearby to $E$ by showing that $\Psi$ is a contraction in an appropriate ball centered at $E$ for $M$ sufficiently large. We start by defining a ball centered at $E$ with radius $f(M)$:
\begin{equation} \label{eq:omega_M}
    \Omega_M := \left\{ | z - E | \leq f(M) \right\}.
\end{equation}
Here we assume $f(M)$ is a function such that $\lim_{M \rightarrow \infty} f(M) = 0$ but don't define its precise form yet. By the mean value theorem, $\Psi$ is a contraction as long as $\Psi : \Omega_M \rightarrow \Omega_M$ and
\begin{equation} \label{eq:contract}
    \left| \de_z \Psi(z) \right| \leq \frac{1}{2} \text{ for all } z \in \Omega_M.
\end{equation}
Using the definition of $\Psi$ \eqref{eq:psi_z}, the condition for $\Psi$ to be a contraction \eqref{eq:contract} will be satisfied if
\begin{equation} \label{eq:bd}
    | \de_z \Theta(z) - \de_z \Theta(E) | \leq \frac{1}{2} | \de_z \Theta(E) | \text{ for all } z \in \Omega_M.
\end{equation}
Invoking the mean value theorem again, we have that
\begin{equation} \label{eq:MVT}
    | \de_z \Theta(z) - \de_z \Theta(E) | \leq f(M)  \sup_{z \in \Omega_M} \left| \de_z^2 \Theta(z) \right|  \text{ for all } z \in \Omega_M.
\end{equation}
Combining \eqref{eq:MVT} and \eqref{eq:bd} establishes the following criterion for $\Psi$ \eqref{eq:psi_z} to be a contraction in the ball $\Omega_M$ \eqref{eq:omega_M}.
\begin{lemma} \label{lem:contract}
Let $u_z$ be as in \eqref{eq:eq}, $\Theta$ be as in \eqref{eq:theta}, $\Psi$ be as in \eqref{eq:psi_z}, and $\Omega_M$ be as in \eqref{eq:omega_M}. Then $\Psi : \Omega_M \rightarrow \Omega_M$ is a contraction if
\begin{equation} \label{eq:for_fixed}
    f(M)  \sup_{z \in \Omega_M} \left| \de_z^2 \Theta(z) \right| \leq \frac{1}{2} | \de_z \Theta(E) |.
\end{equation}
\end{lemma}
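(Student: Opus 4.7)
The proof is a direct two-step application of the mean value theorem, and much of it is already spelled out in the paragraphs preceding the statement. My plan is as follows.

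First I would compute directly from $\Psi(z) = z - \Theta(z)/\de_z\Theta(E)$ that $\de_z \Psi(z) = [\de_z\Theta(E) - \de_z\Theta(z)]/\de_z\Theta(E)$, so that controlling $|\de_z \Psi|$ on $\Omega_M$ reduces to controlling the numerator. Since $\Omega_M$ is convex, I would apply MVT to $\de_z \Theta$ along the segment from $E$ to an arbitrary $z \in \Omega_M$, yielding $|\de_z\Theta(z) - \de_z\Theta(E)| \leq f(M)\sup_{w \in \Omega_M}|\de_z^2\Theta(w)|$, which is exactly \eqref{eq:MVT}. Hypothesis \eqref{eq:for_fixed} bounds the right-hand side by $\tfrac{1}{2}|\de_z\Theta(E)|$, so $|\de_z\Psi(z)| \leq \tfrac{1}{2}$ throughout $\Omega_M$. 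A second application of MVT along segments in $\Omega_M$ then converts this pointwise derivative bound into the Lipschitz estimate $|\Psi(z) - \Psi(w)| \leq \tfrac{1}{2}|z - w|$, which is the contraction inequality.

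There is no real obstacle within the lemma itself --- the content is purely algebraic together with two invocations of MVT. The one subtlety I would flag is that the stated conclusion ``$\Psi : \Omega_M \to \Omega_M$'' also requires $\Psi$ to preserve the ball. Combining the contraction inequality with the triangle inequality gives $|\Psi(z) - E| \leq |\Psi(z) - \Psi(E)| + |\Psi(E) - E| \leq \tfrac{1}{2}f(M) + |\Theta(E)/\de_z\Theta(E)|$, so the self-mapping property requires the supplementary estimate $|\Theta(E)/\de_z\Theta(E)| \leq \tfrac{1}{2}f(M)$. I expect this to either be folded into the hypothesis for use later or verified separately at the point where the Banach fixed point theorem is invoked. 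The genuinely hard work, postponed to Sections \ref{sec:bound_de_z_thet} and \ref{sec:bound_de_z_2_thet}, is producing the quantitative Floquet-theoretic bounds on $\Theta$ and its derivatives --- in particular a lower bound on $|\de_z\Theta(E)|$, an upper bound on $\sup_{\Omega_M}|\de_z^2\Theta|$, and exponential smallness of $\Theta(E)$ itself --- that jointly make all these inequalities attainable at the scale $f(M) \sim M^{-2}e^{-kM}$ appearing in Theorem \ref{th:main_theorem}.
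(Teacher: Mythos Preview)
Your proposal is correct and matches the paper's approach essentially line for line: the paper derives \eqref{eq:bd} from the definition of $\Psi$, applies the mean value theorem to $\de_z\Theta$ to get \eqref{eq:MVT}, and combines these to obtain the criterion \eqref{eq:for_fixed}. Your flag about the self-mapping property is also apt --- the paper defers that verification to the proof of part (1) of Theorem \ref{th:main_theorem}, where the estimate $|\Theta(E)/\de_z\Theta(E)| \leq C e^{-2kM}$ from Lemma \ref{lem:bound_de_z_thet} is invoked exactly as you anticipate.
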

We will prove the criterion \eqref{eq:for_fixed} holds for sufficiently large $M$ and $f(M) = \frac{1}{M^2} e^{- k M}$ by proving the following lemmas
\begin{lemma} \label{lem:bound_de_z_thet}
There exist positive real constants $C$ and $k$ and $M_0 > \rho > 0$ such that for all $M \geq M_0$, 
\begin{equation} \label{eq:bound_thet}
    | \Theta(E) | \leq C e^{- k M},
\end{equation}
and
\begin{equation} \label{eq:bound_de_z_thet}
    | \de_z \Theta(E) | \geq C e^{k M}. 
\end{equation}
\end{lemma}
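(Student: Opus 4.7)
The plan is to exploit Floquet theory applied to the ODE $(D_x^2 + V_{\rm per} - E) u = 0$ on the region $|x| > \rho$. Since the bound state $\Phi$ decays, its Floquet decomposition involves only the decaying multiplier, so the existence of $\Phi$ forces the characteristic multipliers of this periodic ODE at energy $E$ to be real reciprocals $e^{\pm k}$ for some $k > 0$. Fix the Floquet basis of solutions $\phi_-(x), \phi_+(x)$ with $\phi_-(x) \sim e^{-k x}$ and $\phi_+(x) \sim e^{+k x}$ (modulated by periodic factors) for $x \geq \rho$. By the uniqueness of the decaying solution and the parity-based choice of initial data in \eqref{eq:eq}, $u_E(x)$ must be a scalar multiple of $\phi_-(x)$ on $x \geq \rho$, and similarly $v_E(x)$ necessarily has a nontrivial $\phi_+$ component (for otherwise the Wronskian $u_E v_E' - u_E' v_E = 1$ would fail for large $x$). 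Consequently $|u_E(M)|, |u_E'(M)| \leq C e^{-k M}$ while $|v_E(M)|, |v_E'(M)| \geq c e^{+k M}$, with $(v_E'(M))^2 + E (v_E(M))^2 \geq c' e^{2kM}$ for some $c'>0$.

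For the first inequality \eqref{eq:bound_thet}, I would use the definition $\Theta(E) = u_E'(M) - i\sqrt{E}\,u_E(M)$ together with the decay bounds on $u_E(M)$ and $u_E'(M)$ from the previous paragraph, which immediately gives $|\Theta(E)| \leq C e^{-kM}$.

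For the second inequality \eqref{eq:bound_de_z_thet}, I would first derive a closed-form expression for $\partial_z u_z$. Differentiating $(D_x^2 + V - z) u_z = 0$ in $z$ and using the fact that $\partial_z u_z$ satisfies zero initial data at $x = 0$ (since the initial data in \eqref{eq:eq} is $z$-independent), the variation-of-parameters formula with the fundamental pair $(u_z, v_z)$ of Wronskian one gives
\begin{equation}
\partial_z u_z(x) = v_z(x) \inty{0}{x}{u_z(y)^2}{y} - u_z(x) \inty{0}{x}{u_z(y) v_z(y)}{y},
\end{equation}
and similarly for $\partial_z u_z'(x)$ with $v_z, u_z$ replaced by their derivatives in the prefactors. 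Evaluating at $z = E$ and $x = M$, the first term dominates because $v_E$ grows like $e^{kM}$ while $u_E$ decays, and $\inty{0}{M}{u_E^2}{y}$ converges to the (nonzero) squared $L^2$-norm of the bound state (up to a constant) and is therefore bounded above and bounded below by positive constants uniformly in $M$. Substituting into $\partial_z \Theta(E) = \partial_z u_z'(M)\big|_{z=E} - i\sqrt{E}\,\partial_z u_z(M)\big|_{z=E} - \tfrac{i}{2\sqrt{E}} u_E(M)$, the leading contribution is
\begin{equation}
\bigl[v_E'(M) - i\sqrt{E}\,v_E(M)\bigr] \inty{0}{M}{u_E(y)^2}{y},
\end{equation}
whose modulus is $\sqrt{v_E'(M)^2 + E\,v_E(M)^2}\cdot \inty{0}{M}{u_E^2}{y} \geq c\, e^{kM}$ by the Floquet lower bound, while the remaining terms are $O(1)$ or decay.

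The main obstacle is verifying that no accidental cancellation of the $e^{kM}$ growth can occur in $\partial_z\Theta(E)$. The decomposition above handles this because the quantity $v_E'(M) - i\sqrt{E}\,v_E(M)$ has modulus exactly $\sqrt{v_E'(M)^2 + E v_E(M)^2}$ (both $v_E(M), v_E'(M)$ are real and $E > 0$), and this cannot vanish since $v_E$ is a nontrivial real solution. Bounding the subleading $u_E \inty{}{}{u_E v_E}{}$ term requires showing $\inty{0}{M}{u_E v_E}{y} = O(M)$ — which follows since $u_E v_E = \phi_- \phi_+$ is bounded by a constant (modulated by a periodic function) — so this contribution multiplied by $u_E(M) \sim e^{-kM}$ is exponentially small. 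Careful bookkeeping of these Floquet asymptotics, and verifying they are uniform in $M$, is where the technical work lies.
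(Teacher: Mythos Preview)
Your proposal is correct and follows essentially the same route as the paper: Floquet decay/growth of $u_E$ and $v_E$, variation of parameters for $\partial_z u_z$, isolation of the dominant $v_E$ contribution in $\partial_z\Theta(E)$, and the key observation that $|v_E'(M) - i\sqrt{E}\, v_E(M)|^2 = v_E'(M)^2 + E\, v_E(M)^2$ (real data, $E>0$) rules out cancellation. One minor slip: the \emph{individual} lower bounds $|v_E(M)|, |v_E'(M)| \geq c\, e^{kM}$ can fail since the periodic Floquet factor may vanish at isolated points, but only the combined quantity is needed, and the paper secures that via the Wronskian identity $1 = |u_E' v_E - u_E v_E'| \leq (u_E^2 + u_E'^2)^{1/2}(v_E^2 + v_E'^2)^{1/2}$.
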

\begin{proof}
Given in Section \ref{sec:bound_de_z_thet}.
\end{proof}
\begin{lemma} \label{lem:bound_de_z_2_thet}
Let $k$ be as in Lemma \ref{lem:bound_de_z_thet} and $f(M)$ in the definition of $\Omega_M$ be $\frac{1}{M^2} e^{- k M}$. Then there exist positive constants $C$ and $M_0 > \rho \geq 0$ such that for all $M \geq M_0$,
\begin{equation}
    \sup_{z \in \Omega_M} \left| \de_z^2 \Theta(z) \right| \leq C e^{k M}.
\end{equation}
\end{lemma}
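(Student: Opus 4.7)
The plan is to follow the same road map as in Lemma \ref{lem:bound_de_z_thet}: derive explicit integral representations for $\partial_z^2 u_z(M)$ and $\partial_z^2 u_z'(M)$ via variation of parameters, then invoke Floquet-theoretic pointwise bounds on the fundamental pair $\{u_z, v_z\}$ uniformly over a complex neighborhood of $E$ containing $\Omega_M$. Differentiating $\Theta(z) = u_z'(M) - i\sqrt{z}\, u_z(M)$ twice in $z$ reduces the task to bounding $\partial_z^j u_z(M)$ and $\partial_z^j u_z'(M)$ for $j = 0,1,2$ by a constant multiple of $e^{kM}$, uniformly on $\Omega_M$.

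To that end, differentiating the ODE $(D_x^2 + V - z) u_z = 0$ in $z$ yields the inhomogeneous problems $(D_x^2 + V - z)\partial_z u_z = u_z$ and $(D_x^2 + V - z)\partial_z^2 u_z = 2 \partial_z u_z$, each with vanishing Cauchy data at $x = 0$. Since $u_z$ and $v_z$ have Wronskian one, variation of parameters gives
\begin{equation*}
\partial_z u_z(x) = u_z(x)\int_0^x v_z(y) u_z(y)\, dy - v_z(x)\int_0^x u_z(y)^2\, dy,
\end{equation*}
together with an iterated double-integral version of the same identity for $\partial_z^2 u_z(x)$, and analogous formulas for the $x$-derivatives by differentiating in $x$. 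The critical input, already used in Lemma \ref{lem:bound_de_z_thet}, is that because the bound state $\Phi$ decays, $E$ lies strictly inside a spectral gap of the periodic operator, so the Floquet monodromy at $z = E$ has real reciprocal eigenvalues $e^{\pm k}$. By holomorphy of the monodromy in $z$, there is a fixed complex neighborhood $U$ of $E$ on which one has uniform bounds $|u_z(x)| + |u_z'(x)| \leq C e^{-kx}$ and $|v_z(x)| + |v_z'(x)| \leq C e^{kx}$ for all $x \geq 0$ and $z \in U$.

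Substituting these exponential bounds into the variation-of-parameters formulas, and using that $\int_0^\infty u_z(y)^2\, dy$ converges, the dominant contribution comes from the $v_z(M)$ factor outside the integral, yielding $|\partial_z u_z(M)|, |\partial_z u_z'(M)| \leq C e^{kM}$; iterating the same estimate for $\partial_z^2 u_z(M)$ and $\partial_z^2 u_z'(M)$ gives the desired $C e^{kM}$ bound. Any polynomial-in-$M$ prefactor arising from the doubled integration is harmless since the fixed-point criterion \eqref{eq:for_fixed} carries the extra factor $f(M) = M^{-2} e^{-kM}$, and in any case may be absorbed into a marginally enlarged value of the constant $k$. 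The main obstacle is establishing the Floquet bounds uniformly in $z \in U$ rather than only at the real point $z = E$; the key observation that rescues the argument is that $\Omega_M$ contracts to $\{E\}$ as $M \to \infty$, so $U$ may be fixed once and for all independently of $M$, with $\Omega_M \subset U$ for all $M$ sufficiently large.
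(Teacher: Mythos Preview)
Your argument has a genuine gap at the step where you claim that holomorphy of the monodromy furnishes a fixed neighborhood $U$ of $E$ on which $|u_z(x)| + |u_z'(x)| \leq C e^{-kx}$ uniformly. This is false. The function $u_z$ is defined by the fixed Cauchy data $u_z(0)=1,\ u_z'(0)=0$, not as the decaying Floquet solution. At $z=E$ the two coincide---that is precisely the bound-state condition---but for any $z\neq E$ the IVP solution $u_z$ acquires a nonzero component along the growing Floquet branch and therefore \emph{grows} like $e^{kx}$ as $x\to\infty$. Holomorphy of the monodromy only says the Floquet exponents and Floquet solutions vary analytically in $z$; it says nothing about the coefficient of the growing solution in the expansion of $u_z$, which is $O(z-E)$ but strictly nonzero off $E$. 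Consequently your assertion that $\int_0^\infty u_z(y)^2\,dy$ converges, and the subsequent cascade of estimates, collapses.

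The paper's proof avoids this trap by never claiming decay of $u_z$ for $z\neq E$. Instead it writes the equation as $(D_x^2+V-E)u_z=(z-E)u_z$, applies Duhamel with the explicit propagator $U_E$ (bounded in operator norm by $C^*(e^{kx}+1)$), and sums the resulting Picard series to obtain the \emph{growth} bound $|u_z(x)|,|u_z'(x)|\leq 2C^*(e^{kx}+1)$ valid whenever $|z-E|\leq \tfrac{1}{2xC^*(e^{kx}+1)}$. The same iteration, applied to the inhomogeneous problems for $\partial_z u_z$ and $\partial_z^2 u_z$, yields the same $e^{kx}$ growth bound for those quantities. The hypothesis $f(M)=M^{-2}e^{-kM}$ is exactly what guarantees $\Omega_M$ sits inside the admissible region $|z-E|\leq \tfrac{1}{2MC^*(e^{kM}+1)}$ for large $M$; this is where the shrinking of $\Omega_M$ is actually used, not merely to place it inside a fixed $U$. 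To repair your argument you would need to replace the claimed decay of $u_z$ with a growth bound of this type and redo the variation-of-parameters estimates accordingly---at which point you have essentially reproduced the paper's approach.
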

\begin{proof}
Given in Section \ref{sec:bound_de_z_2_thet}.
\end{proof}
We can now prove the existence part of Theorem \ref{th:main_theorem}.
\begin{proof}[Proof of part (1) of Theorem \ref{th:main_theorem}]
Assuming their hypotheses, Lemmas \ref{lem:bound_de_z_thet} and \ref{lem:bound_de_z_2_thet} clearly imply the condition \eqref{eq:for_fixed}. To see that $\Psi : \Omega_M \rightarrow \Omega_M$, note that using \eqref{eq:contract} we have
\begin{equation}
    \left| \Psi(z) - E \right| = \left| \Psi(z) - \Psi(E) + \Psi(E) - E \right| \leq \frac{1}{2} |z - E| + \left| \frac{\Theta(E)}{\de_z \Theta(E)} \right|.
\end{equation}
If $z \in \Omega_M$, then by definition and by Lemma \ref{lem:bound_de_z_thet} we have that
\begin{equation}
    \left| \Psi(E) - E \right| \leq \frac{1}{2 M^2} e^{- k M} + C e^{- 2 k M},
\end{equation}
and hence for sufficiently large $M$ we have $\Psi(z) \in \Omega_M$. Hence by Lemma \ref{lem:contract} we are done.
\end{proof}
The proofs of Lemmas \ref{lem:bound_de_z_thet} and \ref{lem:bound_de_z_2_thet} involve careful estimation of solutions of \eqref{eq:eq} and are the subjects of Sections \ref{sec:bound_de_z_thet} and \ref{sec:bound_de_z_2_thet}. 

\section{Asymptotics of the resonance} \label{sec:asymp}

We now show how to prove the asymptotic formula part of Theorem \ref{th:main_theorem} assuming Lemmas \ref{lem:bound_de_z_thet} and \ref{lem:bound_de_z_2_thet}. The idea is to use the formula for the resonance in terms of iterates of the contraction $\Psi$:
\begin{equation} \label{eq:iterate}
    z^* = \lim_{n \rightarrow \infty} \Psi^n(E),
\end{equation}
although just one iterate will be enough to capture the leading-order asymptotics. 

\begin{proof}[Proof of part (2) of Theorem \ref{th:main_theorem}]
Let $z^{(n)} := \Psi^n(E)$. Since $\Psi$ is a contraction we have that
\begin{equation}
    | z^{(n+1)} - z^{(n)} | \leq \frac{1}{2} | z^{(n)} - z^{(n-1)} |.
\end{equation}
By a telescoping argument it now follows that 
\begin{equation} \label{eq:tele}
    | z^* - z^{(1)} | = \left| \sum_{n = 1}^\infty z^{(n+1)} - z^{(n)} \right| \leq 2 | z^{(2)} - z^{(1)} |.
\end{equation}
By definition,
\begin{equation} \label{eq:z1}
    z^{(1)} = E - \frac{ \Theta(E) }{ \de_z \Theta(E) },
\end{equation}
and 
\begin{equation} \label{eq:z2}
    z^{(2)} = z^{(1)} - \frac{ \Theta( z^{(1)} ) }{ \de_z \Theta(E) }.
\end{equation}
Substituting \eqref{eq:z1} and \eqref{eq:z2} into \eqref{eq:tele} implies
\begin{equation} \label{eq:almost}
    \left| z^* -  \left( E - \frac{ \Theta(E) }{ \de_z \Theta(E) } \right) \right| \leq 2 \left| \frac{ \Theta\left( E - \frac{ \Theta(E) }{ \de_z \Theta(E) } \right) }{ \de_z \Theta(E) } \right|.
\end{equation}
The asymptotic formula \eqref{eq:asymp} will follow immediately if we can prove that the right-hand side of \eqref{eq:almost} is $O(e^{- 4 k M})$. To this end, note that second-order Taylor expansion implies that
\begin{equation} \label{eq:taylor}
\begin{split}
    \left| \Theta\left( E - \frac{ \Theta(E) }{ \de_z \Theta(E) } \right) \right| &= \left| \Theta\left( E - \frac{ \Theta(E) }{ \de_z \Theta(E) } \right) - \Theta(E) + \frac{ \Theta(E) }{ \de_z \Theta(E) } \de_z \Theta(E) \right|    \\
    &\leq \frac{1}{2} \left| \frac{ \Theta(E) }{ \de_z \Theta(E) } \right|^2 \sup_{z \in \Omega_M} |\de_z^2 \Theta(z)|.
\end{split}
\end{equation}
Substituting \eqref{eq:taylor} into \eqref{eq:almost} now gives
\begin{equation} \label{eq:almost_2}
    \left| z^* -  \left( E - \frac{ \Theta(E) }{ \de_z \Theta(E) } \right) \right| \leq \frac{1}{ |\de_z \Theta(E)| } \left| \frac{ \Theta(E) }{ \de_z \Theta(E) } \right|^2 \sup_{z \in \Omega_M} |\de_z^2 \Theta(z)|.
\end{equation}
Under the hypotheses of Lemmas \ref{lem:bound_de_z_thet} and \ref{lem:bound_de_z_2_thet} we have that $| \de_z \Theta(E) | \geq C e^{k M}$, $\sup_{z \in \Omega_M} | \de_z^2 \Theta(z) | \leq C e^{k M}$, and $| \Theta(E) | \leq C e^{- k M}$, for positive constants $C > 0$. The asymptotic formula \eqref{eq:asymp} now follows immediately from substituting these estimates into \eqref{eq:almost_2}.
\end{proof}

\section{Proof of Lemma \ref{lem:bound_de_z_thet}: Bounding $\Theta(E)$ from above and $\de_z \Theta(E)$ from below} \label{sec:bound_de_z_thet}

In this section we will prove Lemma \ref{lem:bound_de_z_thet}. WLOG we assume that the bound state $\Phi(x)$ is even, since the case where the bound state is odd is similar. Recall that we define 
\begin{equation} \label{eq:thet}
    \Theta(z) = u_z'(M) - i \sqrt{z} u_z(M), 
\end{equation}
where $u_z(x) \in C^\infty([0,M])$ is defined for any $z \in \field{C}$ by
\begin{equation} \label{eq:eq_2}
    ( D_x^2 + V(x) - z ) u_z = 0, \quad u_z(0) = 1, u_z'(0) = 0.
\end{equation}

Basic Floquet theory (see e.g. \cite{MagnusWinkler}) implies that the ODE appearing in \eqref{eq:eq_2} has a fundamental solution set $u_{1,z}(x)$ and $u_{2,z}(x)$ which satisfy
\begin{equation} \label{eq:quasi}
    u_{1,z}(x + 1) = \lambda(z) u_{1,z}(x) \quad u_{2,z}(x + 1) = \lambda(z)^{-1} u_{2,z}(x) \text{ for all $x \geq \rho$} 
\end{equation}
for some $\lambda(z) \in \field{C}$ depending on $z$. For $H$ to have a bound state with energy $E$, it must be that \eqref{eq:eq_2} has a decaying solution when $z = E$ and hence $|\lambda(E)| \neq 1$. Without loss of generality, we will assume $0 < \lambda(E) < 1$ so that $u_{1,E}(x)$ decays, and $u_{2,E}(x)$ grows, as $x \rightarrow \infty$. It is convenient to introduce a new constant $k > 0$ such that
\begin{equation}
    \lambda(E) = e^{- k},
\end{equation}
so that when $z = E$, \eqref{eq:quasi} becomes
\begin{equation} \label{eq:quasi_2}
    u_{1,E}(x + 1) = e^{- k} u_{1,E}(x) \quad u_{2,E}(x + 1) = e^k u_{2,E}(x) \text{ for all $x \geq \rho$}.
\end{equation}

Since the bound state must decay as $x \rightarrow \infty$, and (by the assumption that the bound state is even) satisfy the initial conditions \eqref{eq:eq_2}, we have that (perhaps after multiplying $u_{1,E}(x)$ by a constant)
\begin{equation}
    u_E(x) = u_{1,E}(x)
\end{equation}
for all $x \in [0,M]$. Furthermore, letting $v_z(x)$ denote the solution of the following initial value problem
\begin{equation}
    ( D_x^2 + V(x) - z ) v_z = 0, \quad v_z(0) = 0, v_z'(0) = 1,
\end{equation}
we must have that (perhaps after multiplying $u_{2,E}(x)$ by a constant)
\begin{equation}
    v_E(x) = u_{2,E}(x).
\end{equation}

We can now state two equivalent identities which are important in the coming proofs. For $\rho \leq x \leq M$, let $[x]$ be defined by the conditions
\begin{equation} \label{eq:x_sq}
    [x] \in [\rho,\rho+1) \text{ and } \exists m \in \field{N} \text{ such that } [x] + m = x.
\end{equation}
Then \eqref{eq:quasi_2} implies immediately that
\begin{equation} \label{eq:sols_1}
    u_E(x) = e^{- k (x - [x])} u_E([x]), \text{ and } v_E(x) = e^{k (x - [x])} v_E([x]).
\end{equation}
Equivalently, there exist 1-periodic functions $p(x), q(x) \in C^\infty([\rho,M])$ such that
\begin{equation} \label{eq:sols_2}
    u_{E}(x) = e^{- k (x - \rho)} p(x), \text{ and } v_{E}(x) = e^{k (x - \rho)} q(x) \text{ for all $x \geq \rho$}.
\end{equation}

The following lemma is now clear 
\begin{lemma} \label{lem:bound_uz}
There exist positive constants $C_1, C_2, k$ such that for all $M \geq \rho$
\begin{equation}
    \left| u_E(M) \right| \leq C_1 e^{- k M}, \quad \left| u_E'(M) \right| \leq C_2 e^{- k M}.
\end{equation}
\end{lemma}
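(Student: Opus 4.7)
The plan is to extract the bounds directly from the Floquet normal form already established in equations \eqref{eq:sols_1}--\eqref{eq:sols_2}. Writing $u_E(x) = e^{-k(x-\rho)} p(x)$ with $p$ smooth and $1$-periodic, the key observation is that a continuous $1$-periodic function on $\field{R}$ attains its supremum on the compact fundamental period $[\rho,\rho+1]$, so $\|p\|_{L^\infty([\rho,M])} \leq \|p\|_{L^\infty([\rho,\rho+1])} =: \tilde C_1$ uniformly in $M$. Evaluating at $x = M$ gives
\begin{equation*}
    |u_E(M)| \leq \tilde C_1\, e^{-k(M-\rho)} = \bigl( \tilde C_1 e^{k\rho}\bigr) e^{-kM},
\end{equation*}
which is the first bound with $C_1 := \tilde C_1 e^{k\rho}$.

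For the derivative, I would differentiate the representation to obtain
\begin{equation*}
    u_E'(x) = e^{-k(x-\rho)} \bigl( p'(x) - k\, p(x) \bigr).
\end{equation*}
Since $p \in C^\infty$ is $1$-periodic, $p'$ is also continuous and $1$-periodic, hence bounded on $[\rho,M]$ by $\|p'\|_{L^\infty([\rho,\rho+1])} =: \tilde C_2'$. Therefore
\begin{equation*}
    |u_E'(M)| \leq (\tilde C_2' + k\tilde C_1)\, e^{-k(M-\rho)} = C_2\, e^{-kM},
\end{equation*}
with $C_2 := (\tilde C_2' + k\tilde C_1) e^{k\rho}$.

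There is essentially no obstacle here: the lemma is a one-line consequence of the Floquet decomposition combined with the boundedness of a continuous periodic function on its fundamental period. The exponential rate $k = -\log \lambda(E) > 0$ is the same one introduced in \eqref{eq:quasi_2}, and all constants depend only on $V_{\rm per}$, $E$, and $\rho$, not on $M$, which is precisely what is needed for the later application in Lemma \ref{lem:bound_de_z_thet}. Only a remark is perhaps worth adding: the regularity $V \in C^\infty$ guarantees $p \in C^\infty$ via standard ODE theory, so derivatives of all orders of $u_E$ would admit analogous bounds — a fact that will be useful when estimating $\de_z \Theta$ and $\de_z^2 \Theta$ later.
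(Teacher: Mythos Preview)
Your proof is correct and follows exactly the approach of the paper, which simply states that the lemma is ``clear'' from the Floquet representations \eqref{eq:sols_1}--\eqref{eq:sols_2}; you have merely spelled out the one-line argument in detail.
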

We can now give the proof of assertion \eqref{eq:bound_thet} of Lemma \ref{lem:bound_de_z_thet}.
\begin{proof}[Proof of assertion \eqref{eq:bound_thet} of Lemma \ref{lem:bound_de_z_thet}]
The assertion follows immediately from directly bounding \eqref{eq:thet} using the triangle inequality and Lemma \ref{lem:bound_uz}.
\end{proof}

Towards a proof of assertion \eqref{eq:bound_de_z_thet} of Lemma \ref{lem:bound_de_z_thet}, we differentiate $\Theta(z)$ with respect to $z$:
\begin{equation}
    \de_z \Theta(z) := \de_z u_z'(M) - i \sqrt{z} \de_z u_z(M) - i \frac{1}{2 \sqrt{z}} u_z(M).
\end{equation}
Differentiating \eqref{eq:eq_2} with respect to $z$ we see that $\de_z u_z(x)$ satisfies
\begin{equation} \label{eq:nonhom_eq}
    ( D_x^2 + V(x) - z ) \de_z u_z = u_z, \quad \de_z u_z (0) = 0, \de_z u_z'(0) = 0
\end{equation}
for all $z \in \field{C}$. When $z = E$, we can construct the solution of \eqref{eq:nonhom_eq} in terms of $u_E(x)$ and $v_E(x)$ using variation of parameters. This will allow us to prove the following Lemma which is the key step in bounding $\de_z \Theta(E)$ below. 
\begin{lemma} \label{lem:for_bd_bel}
Let $k > 0$ be as in Lemma \ref{lem:bound_uz}. Then there exist constants $C > 0$ and $M_0 \geq \rho$ such that for all $M \geq M_0$,
\begin{equation} \label{eq:bd_below}
    \left( \left( \left. \de_z u_z'(M) \right|_{z = E} \right)^2 + E \left( \left. \de_z u_z(M) \right|_{z = E} \right)^2 \right)^{1/2} \geq C e^{k M}.
\end{equation}
\end{lemma}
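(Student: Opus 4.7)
The plan is to convert the claimed lower bound on a pair of derivatives into a Floquet-theoretic statement about the exponentially growing fundamental solution $v_E$. First I would solve the inhomogeneous initial value problem \eqref{eq:nonhom_eq} at $z = E$ explicitly via variation of parameters, using that $(u_E, v_E)$ is a fundamental solution set with Wronskian $u_E v_E' - u_E' v_E = 1$. A routine computation with zero initial data at $x = 0$ yields
\begin{equation}
\left. \de_z u_z(x) \right|_{z=E} = u_E(x) J(x) - v_E(x) I(x), \qquad \left. \de_z u_z'(x) \right|_{z=E} = u_E'(x) J(x) - v_E'(x) I(x),
\end{equation}
where $I(x) := \inty{0}{x}{u_E(y)^2}{y}$ and $J(x) := \inty{0}{x}{u_E(y) v_E(y)}{y}$. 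Abbreviating $I_M := I(M)$, $J_M := J(M)$ and expanding squares, the square of the left-hand side of \eqref{eq:bd_below} becomes
\begin{equation}
A_M J_M^2 - 2 B_M I_M J_M + C_M I_M^2,
\end{equation}
with $A_M := u_E'(M)^2 + E \, u_E(M)^2$, $B_M := u_E'(M) v_E'(M) + E \, u_E(M) v_E(M)$, and $C_M := v_E'(M)^2 + E \, v_E(M)^2$.

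Next I would control these three factors using Lemma \ref{lem:bound_uz} together with the Floquet representations \eqref{eq:sols_2}. Since $u_E \in L^2(\field{R})$, the limit $I_M \to \inty{0}{\infty}{u_E^2}{y} > 0$ gives $I_M \geq c > 0$ uniformly for $M$ large. On $[\rho, M]$ the product $u_E v_E$ equals $p q$, which is $1$-periodic and bounded, so $|J_M| \leq C M$. Combined with $|u_E(M)|, |u_E'(M)| \leq C e^{-kM}$ from Lemma \ref{lem:bound_uz} and the matching upper bound $|v_E(M)|, |v_E'(M)| \leq C e^{kM}$ from \eqref{eq:sols_2}, one obtains $|A_M J_M^2| \leq C M^2 e^{-2kM}$ and $|B_M I_M J_M| \leq C M$ (note that the exponential factors in $B_M$ cancel). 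The main term is therefore $C_M I_M^2$, and the entire lemma reduces to a lower bound of the form $C_M \geq c \, e^{2 k M}$.

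The decisive step, which I expect to be the main obstacle, is this lower bound on $C_M$. Writing $v_E(x) = e^{k(x - \rho)} q(x)$ with $q$ smooth and $1$-periodic, one computes
\begin{equation}
C_M = e^{2 k (M - \rho)} \Bigl[ \bigl( k q(M) + q'(M) \bigr)^2 + E \, q(M)^2 \Bigr].
\end{equation}
The bracketed expression is continuous and $1$-periodic in $M$, and I would argue it is strictly positive: if it vanished at some $x_0$, then (using $E > 0$) both $q(x_0) = 0$ and $k q(x_0) + q'(x_0) = 0$, which translates into $v_E(x_0) = v_E'(x_0) = 0$ and forces $v_E \equiv 0$ by uniqueness for the homogeneous ODE, contradicting the linear independence of $u_E$ and $v_E$. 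Compactness of a single period then supplies a uniform positive lower bound. Plugging back in, for $M \geq M_0$ sufficiently large the square of the left-hand side of \eqref{eq:bd_below} is at least $\tfrac{1}{2} c \, e^{2 k M} I_M^2 - \bigO{M} \geq c' e^{2 k M}$, and taking square roots yields the claimed bound.
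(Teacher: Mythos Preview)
Your argument is correct and follows essentially the same route as the paper: variation of parameters to express $\de_z u_z(M)$ and $\de_z u_z'(M)$ at $z=E$ in terms of $u_E,v_E$, then use the Floquet representation \eqref{eq:sols_2} to isolate the exponentially growing contribution coming from $v_E$, and finally bound the periodic prefactor uniformly away from zero. The only notable difference is in the last step: you argue positivity of the periodic factor $(kq+q')^2+Eq^2$ directly via ODE uniqueness (if it vanished at $x_0$ then $v_E(x_0)=v_E'(x_0)=0$, contradiction), whereas the paper instead bounds $v_E(x)^2+v_E'(x)^2$ below quantitatively using the Wronskian identity together with Cauchy--Schwarz, obtaining $\bigl(v_E^2+v_E'^2\bigr)^{1/2}\geq \bigl(\sup_{[\rho,\rho+1)}(u_E^2+u_E'^2)\bigr)^{-1/2}$. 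Both arguments are short and equivalent in strength for this purpose; the Wronskian version has the mild advantage of giving an explicit constant.
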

\begin{proof}
Given in Section \ref{sec:ests_2}.
\end{proof}
We can now prove the second assertion of Lemma \ref{lem:bound_de_z_thet}.
\begin{proof}[Proof of assertion \eqref{eq:bound_de_z_thet} of Lemma \ref{lem:bound_de_z_thet}]
Since every term of \eqref{eq:nonhom_eq} is real when $z = E$, $\left. \de_z u_z(M) \right|_{z = E}$ and $\left. \de_z u_z'(M) \right|_{z = E}$ are real. It then follows that 
\begin{equation} 
    \left| \de_z u_z'(M) - i \sqrt{z} \de_z u_z(M) \right|_{z = E} = \left( \left( \left. \de_z u_z'(M) \right|_{z = E} \right)^2 + E \left( \left. \de_z u_z(M) \right|_{z = E} \right)^2 \right)^{1/2}.
\end{equation}
Now using Lemmas \ref{lem:bound_uz} and \ref{lem:for_bd_bel} and the reverse triangle inequality we can compute
\begin{equation}
\begin{split}
    | \de_z \Theta(E) | &= \left| \de_z u_z'(M) - i \sqrt{z} \de_z u_z(M) - i \frac{1}{2 \sqrt{z}} u_z(M) \right|_{z = E}    \\
    &\geq \left| \de_z u_z'(M) - i \sqrt{z} \de_z u_z(M) \right|_{z = E} + O(e^{-k M})  \\
    &\geq C e^{k M}
\end{split}
\end{equation}
for sufficiently large $M$.
\end{proof}

We now give the proof of Lemma \ref{lem:for_bd_bel}.

\subsection{Proof of Lemma \ref{lem:for_bd_bel}} \label{sec:ests_2}

Using variation of parameters and noting that the Wronskian of $u_z$ and $v_z$ is 1, we have for any $z \in \field{C}$ that 
\begin{equation}
    \de_z u_z(x) = \left( \inty{0}{x}{ v_z(y) u_z(y) }{y} \right) u_z(x) - \left( \inty{0}{x}{ u_z^2(y) }{y} \right) v_z(x).
\end{equation}
Differentiating with respect to $x$ gives
\begin{equation}
\begin{split}
    \de_z u_z'(x) &= \left( v_z(x) u_z(x) \right) u_z(x) + \left( \inty{0}{x}{ v_z(y) u_z(y) }{y} \right) u_z'(x) \\
    & \qquad - \left( u_z^2(x) \right) v_z(x) - \left( \inty{0}{x}{ u_z^2(y) }{y} \right) v_z'(x)  \\
    &= \left( \inty{0}{x}{ v_z(y) u_z(y) }{y} \right) u_z'(x) - \left( \inty{0}{x}{ u_z^2(y) }{y} \right) v_z'(x).
\end{split}
\end{equation}
Evaluating these expressions at $x = M$ and $z = E$ we have
\begin{equation}
\begin{split}
    \left. \de_z u_z(M) \right|_{z = E} &= \left( \inty{0}{M}{ v_E(y) u_E(y) }{y} \right) u_E(M) - \left( \inty{0}{M}{ u_E^2(y) }{y} \right) v_E(M)  \\
    \left. \de_z u_z'(M) \right|_{z = E} &= \left( \inty{0}{M}{ v_E(y) u_E(y) }{y} \right) u_E'(M) - \left( \inty{0}{M}{ u_E^2(y) }{y} \right) v_E'(M).
\end{split}
\end{equation}
Using \eqref{eq:sols_2} we have that $v_E(x) u_E(x)$ is 1-periodic for $x \geq \rho$. Combining this with Lemma \ref{lem:bound_uz} we have that
\begin{equation} \label{eq:dis}
\begin{split}
    \left. \de_z u_z(M) \right|_{z = E} &= - \left( \inty{0}{M}{ u_E^2(y) }{y} \right) v_E(M) + O( M e^{- k M} )  \\
    \left. \de_z u_z'(M) \right|_{z = E} &= - \left( \inty{0}{M}{ u_E^2(y) }{y} \right) v_E'(M) + O( M e^{- k M} ).
\end{split}
\end{equation}
We can now start to prove \eqref{eq:bd_below}. Using \eqref{eq:dis} we have that 
\begin{equation} \label{eq:to_bb}
\begin{split}
    &\left( \left( \left. \de_z u_z'(M) \right|_{z = E} \right)^2 + E \left( \left. \de_z u_z(M) \right|_{z = E} \right)^2 \right)^{1/2} \\
    &= \left( \left[ \left( \inty{0}{M}{ u_E^2(y) }{y} \right) v_E(M) + O( M e^{- k M} ) \right]^2  \right.  \\
    & \hspace{.5cm} \left. + \left[ \left( \inty{0}{M}{ u_E^2(y) }{y} \right) v_E'(M) + O( M e^{- k M} ) \right]^2 \right)^{1/2}.
\end{split}
\end{equation}
Using the reverse triangle inequality we have that 
\begin{align} 
&  \text{Equation \eqref{eq:to_bb}} \geq \notag \\
&  \left| \left( \left[ \left( \inty{0}{M}{ u_E^2(y) }{y} \right) v_E(M) \right]^2 + \left[ \left( \inty{0}{M}{ u_E^2(y) }{y} \right) v_E'(M) \right]^2 \right)^{1/2} - O(M e^{- k M}) \right|.  \label{eq:nex}
\end{align}
We are done if we can show the first term inside the absolute value signs can be bounded below by $C e^{k M}$ where $C > 0$ is a positive constant independent of $M$. 

We start by noting that $\inty{0}{M}{ u_E^2(y) }{y}$ is clearly a non-zero and increasing function of $M$, and can hence can be bounded below by a constant independent of $M$, for example
\begin{equation} \label{eq:simpp}
    \inty{0}{M}{ u_E^2(y) }{y} \geq \inty{0}{\rho}{ u_E^2(y) }{y} > 0.
\end{equation}
It follows that
\begin{equation} \label{eq:cont}
\begin{split}
    &\left( \left[ \left( \inty{0}{M}{ u_E^2(y) }{y} \right) v_E(M) \right]^2 + \left[ \left( \inty{0}{M}{ u_E^2(y) }{y} \right) v_E'(M) \right]^2 \right)^{1/2}     \\
    &\geq \left( \inty{0}{\rho}{ u_E^2(y) }{y} \right) \left( \left[ v_E(M) \right]^2 + \left[ v_E'(M) \right]^2 \right)^{1/2}.
\end{split}
\end{equation}
Using the identity \eqref{eq:sols_1} we have 
\begin{equation} \label{eq:exp_behavior}
\begin{split}
    &v_E(M) = e^{k (M - [M])} v_E([M]), \text{ and } v_E'(M) = e^{k (M - [M])} v_E'([M]) \\
    \implies& \; |v_E(M)| \geq e^{k M} e^{- k (\rho + 1)} \inf_{x \in [\rho,\rho+1)} |v_E(x)| \\ 
    &\text{ and } |v_E'(M)| \geq e^{k M} e^{- k (\rho + 1)} \inf_{x \in [\rho,\rho+1)} |v_E'(x)|,
\end{split}
\end{equation}
and hence \eqref{eq:cont} can be bounded below by
\begin{equation} \label{eq:bbd}
\begin{split}
    &\geq \left( \inty{0}{\rho}{ u_E^2(y) }{y} \right) e^{k M} e^{- k (\rho + 1)} \inf_{x \in [\rho,\rho+1)} \left( \left[ v_E(x) \right]^2 + \left[ v_E'(x) \right]^2 \right)^{1/2}.
\end{split}
\end{equation}
We can bound $\left( [v_E(x)]^2 + [v_E'(x)]^2 \right)^{1/2}$ below uniformly in $x \in [\rho,\rho+1)$ using the Wronskian. For any $x \in [\rho,\rho+1)$, we have by the Cauchy-Schwarz inequality
\begin{align}
    & 1 = \left| u_E'(x) v_E(x) - u_E(x) v_E'(x) \right| \notag  \\
   & \hspace{1cm} \leq \left( [u_E(x)]^2 + [u_E'(x)]^2 \right)^{1/2} \left( [v_E(x)]^2 + [v_E'(x)]^2 \right)^{1/2}, \notag
\end{align}
and hence
\begin{equation} \label{eq:v_bd_bel}
    \left( [v_E(x)]^2 + [v_E'(x)]^2 \right)^{1/2} \geq \frac{1}{ \sup_{x \in [\rho,\rho+1)} \left( [u_E(x)]^2 + [u_E'(x)]^2 \right)^{1/2} }.
\end{equation}
Combining \eqref{eq:v_bd_bel} with \eqref{eq:bbd} and substituting into \eqref{eq:nex} we are done.

\section{Proof of Lemma \ref{lem:bound_de_z_2_thet}: Bounding $\de_z^2 \Theta(z)$ uniformly for $z \in \Omega_M$}
\label{sec:bound_de_z_2_thet}

We now seek to bound $\de_z^2 \Theta(z)$ uniformly in $z$ in a ball centered at $E$. Differentiating $\Theta(z)$ gives
\begin{equation} \label{eq:to_bd}
    \de_z^2 \Theta(z) = \de_z^2 u_z'(M) + \frac{1}{4} i z^{-3/2} u_z(M) - i z^{-1/2} \de_z u_z(M) - i z^{1/2} \de_z^2 u_z(M),
\end{equation}
and hence we are done if we can bound $\de_z^2 u_z'(M)$, $u_z(M)$, $\de_z u_z(M)$, and $\de_z^2 u_z(M)$ uniformly in $z$ for $z$ nearby to $E$. 

By differentiating the initial value problem \eqref{eq:eq} we have that $\de_z^2 u_z$ satisfies
\begin{equation} \label{eq:dz2_u_zee}
    ( D_x^2 + V(x) - z ) \de_z^2 u_z = 2 \de_z u_z \quad \de_z^2 u(0) = \de_z^2 u'(0) = 0,
\end{equation}
while $\de_z u_z$ and $u_z$ satisfy 
\begin{equation} \label{eq:dz_u_zee}
    ( D_x^2 + V(x) - z ) \de_z u_z = u_z, \quad \de_z u_z (0) = 0, \de_z u_z'(0) = 0,
\end{equation}
\begin{equation} \label{eq:u_zee}
    ( D_x^2 + V(x) - z ) u_z = 0, \quad u_z (0) = 1, u_z'(0) = 0,
\end{equation}
respectively. The key step in the proof of Lemma \ref{lem:bound_de_z_thet} is the following:
\begin{lemma} \label{lem:bds}
Let $x > 0$ be arbitrary, and let $z \in \field{C}$ be such that
\begin{equation}
    |z - E| \leq \frac{1}{2 x C^* (e^{k x} + 1)}.
\end{equation}
Then there exists a positive constant $C^* > 0$ such that the following estimates hold
\begin{equation} \label{eq:bound_on_u}
    \max \left\{ | {u}_z(x) |, | u_z'(x) | \right\} \leq 2 C^* ( e^{k x} + 1).
\end{equation}
\begin{equation} \label{eq:bound_dz_u}
    \max \left\{ | \de_z {u}_z(x) |, | \de_z u_z'(x) | \right\} \leq 4 C^* ( e^{k x} + 1)
\end{equation}
\begin{equation} \label{eq:bound_dz2_u}
    \max \left\{ | \de_z^2 {u}_z(x) |, | \de_z^2 u_z'(x) | \right\} \leq 8 C^* ( e^{k x} + 1).
\end{equation}
\end{lemma}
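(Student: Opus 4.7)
The plan is to derive Volterra integral equations for $u_z$, $\partial_z u_z$, and $\partial_z^2 u_z$ by variation of parameters using the fundamental set $\{u_E, v_E\}$ of $L_E w = 0$, with $L_E := -\partial_x^2 + (V-E)$, and then close a Neumann-series bootstrap using the smallness hypothesis on $|z-E|$. Rewriting the ODE $(D_x^2 + V - z) u_z = 0$ as $L_E u_z = (z-E) u_z$ and applying variation of parameters (using the Wronskian $u_E v_E' - u_E' v_E = 1$) produces
\begin{equation*}
u_z(x) = u_E(x) + (z-E) \int_0^x K(x,y) u_z(y)\,\dee y, \qquad K(x,y) := u_E(x) v_E(y) - v_E(x) u_E(y).
\end{equation*}
The Floquet identities \eqref{eq:sols_2}, together with analogous representations for $u_E'$ and $v_E'$, yield $|K(x,y)| \leq C^{**} e^{k(x-y)}$ on $0 \leq y \leq x$, and since $K(x,x) = 0$ the boundary term in $\partial_x$ vanishes, so $\partial_x K$ obeys the same form of estimate.

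First I would choose $C^* > 0$ large enough compared to the Floquet constants from Lemma \ref{lem:bound_uz} and to $C^{**}/k$. Using the elementary bound $\tanh(kx/2)/(kx) \leq 1/2$, the hypothesis $|z-E| \leq 1/(2xC^*(e^{kx}+1))$ implies, for every $y \in [0,x]$,
\begin{equation*}
|z-E| \int_0^y |K(y,s)|\,\dee s \leq \frac{C^{**}(e^{kx}-1)}{2kxC^*(e^{kx}+1)} \leq \frac{C^{**}}{4C^*} \leq \frac{1}{2},
\end{equation*}
so the Volterra operator $T_z f(x) := (z-E) \int_0^x K(x,y) f(y)\,\dee y$ is a contraction of factor $\leq 1/2$ in the sup-norm on $[0,x]$. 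The Neumann series $u_z = (I - T_z)^{-1} u_E$ then produces the \emph{uniform} estimate $|u_z(y)| \leq 2|u_E|_\infty \leq 2C^*$, which implies \eqref{eq:bound_on_u}. Differentiating the Volterra equation in $x$, the boundary term vanishes since $K(x,x) = 0$, and the same argument using $\partial_x K$ gives $|u_z'(y)| \leq 2C^*$ uniformly.

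Next, differentiating the Volterra equation once and twice in $z$ yields, with $T f(x) := \int_0^x K(x,y) f(y)\,\dee y$,
\begin{equation*}
\partial_z u_z = T u_z + (z-E) T \partial_z u_z, \qquad \partial_z^2 u_z = 2 T \partial_z u_z + (z-E) T \partial_z^2 u_z.
\end{equation*}
The source term $T u_z$ is estimated using the uniform bound $|u_z| \leq 2C^*$: $|T u_z(x)| \leq 2 C^* C^{**} (e^{kx}-1)/k \leq 2C^*(e^{kx}+1)$, after absorbing $C^{**}/k$ into $C^*$. The self-referential term $(z-E) T \partial_z u_z$ has norm $\leq 1/2$ exactly as before, so the Neumann series gives $|\partial_z u_z(x)| \leq 4C^*(e^{kx}+1)$. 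Feeding this bound into the source $2T\partial_z u_z$ of the second equation yields $|\partial_z^2 u_z(x)| \leq 8C^*(e^{kx}+1)$, and the $x$-derivative versions $\partial_z u_z'$ and $\partial_z^2 u_z'$ follow identically using $\partial_x K$ in place of $K$.

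The main technical obstacle is careful bookkeeping of constants so that the bootstrap closes with the stated doubling factors $2, 4, 8$. The crucial observation is that each source term $T u_z$, $T\partial_z u_z$, \ldots\ must be controlled using a \emph{uniform} (in $y$) bound on the preceding function rather than the loose stated bound of the form $2^n C^* (e^{ky}+1)$; using the latter would introduce an extra factor of $x$ from the integration that would destroy the exponential weight. Once the uniform bounds are extracted at each step, the smallness of $|z-E|$ encoded in the hypothesis guarantees that the self-referential Volterra operator has norm $\leq 1/2$, the Neumann series closes at each stage, and the Floquet constants can be absorbed into a single $C^*$ chosen once at the outset.
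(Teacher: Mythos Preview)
Your Volterra-equation setup is essentially the paper's Picard iteration recast in scalar form, and your kernel estimate $|K(x,y)|\leq C^{**}e^{k(x-y)}$ is genuinely sharper than the crude propagator bound the paper uses. This pays off at the first step: you correctly obtain the \emph{uniform} bound $|u_z(y)|\leq 2C^*$ on $[0,x]$, which is stronger than \eqref{eq:bound_on_u}, and feeding this into $Tu_z$ gives $|\partial_z u_z(y)|\leq C e^{ky}$ with no polynomial loss, matching \eqref{eq:bound_dz_u} in order of magnitude.

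The gap is in the passage to \eqref{eq:bound_dz2_u}. Your ``crucial observation''---that each source $T(\partial_z^j u_z)$ is to be controlled via a \emph{uniform-in-$y$} bound on $\partial_z^j u_z$---requires $\partial_z u_z$ itself to be uniformly bounded on $[0,x]$, and it is not: already at $z=E$ one has $\partial_z u_E(y)\sim -\bigl(\int_0^\infty u_E^2\bigr)\,v_E(y)$, which grows like $e^{ky}$. Feeding the only available pointwise bound $|\partial_z u_z(s)|\leq C e^{ks}$ into $2T\partial_z u_z$ produces $\int_0^x e^{k(x-s)}e^{ks}\,\dee s = x e^{kx}$, i.e.\ precisely the extra factor of $x$ you set out to avoid. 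This loss is not an artifact of the method: a direct variation-of-parameters computation shows that $\partial_z^2 u_E(x)$ generically grows like $xe^{kx}$. The paper's own derivation also picks up polynomial factors (the final display of Section~\ref{sec:dz_u_zee} ends with $4xC^*(e^{kx}+1)$, not $4C^*(e^{kx}+1)$), so the stated constants in the lemma should really carry powers of $x$; this is harmless for the application in Lemma~\ref{lem:bound_de_z_2_thet} because the $M^{-2}$ in the radius of $\Omega_M$ absorbs them, but your attempt to eliminate those factors cannot succeed as written.
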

\begin{proof}
The proofs of \eqref{eq:bound_on_u} and \eqref{eq:bound_dz_u} are given in Sections \ref{sec:u_zee} and \ref{sec:dz_u_zee}. Since the proof of \eqref{eq:bound_dz2_u} is so similar to that of \eqref{eq:bound_dz_u}, we omit it.
\end{proof}
We can now give the proof of Lemma \ref{lem:bound_de_z_2_thet}.
\begin{proof}[Proof of Lemma \ref{lem:bound_de_z_2_thet}]
If $f(M) = \frac{1}{M^2} e^{- k M}$, then there exists a $M_0 > 0$ such that for all $M \geq M_0$, $z \in \Omega_M$ implies that
\begin{equation}
    |z - E| \leq \frac{1}{2 M C^* (e^{k M} + 1)}.
\end{equation}
Applying Lemma \ref{lem:bds} to \eqref{eq:to_bd} and using the triangle inequality yields
\begin{align}
     | \de_z \Theta(z) | & \leq 8 C^* (e^{k M} + 1) + \frac{1}{4} |z|^{-3/2} 2 C^* (e^{k M} + 1)  \notag \\
   & \hspace{.25cm} + |z|^{-1/2} 4 C^* (e^{k M} + 1) + |z|^{1/2} 8 C^* (e^{k M} + 1). \notag
\end{align}
To bound the terms involving $z$ we note that
\begin{equation}
    |z|^{1/2} = |z - E + E|^{1/2} \leq |E|^{1/2} + C |z - E|
\end{equation}
where $C = \sup_{z \in \Omega_M} \fdf{z} |z|^{1/2}$. Since we assume $E > 0$, we have that (perhaps after taking $M_0$ larger)
\begin{equation}
    | \de_z \Theta(z) | \leq C e^{k M}
\end{equation}
for some constant $C > 0$.
\end{proof}

\subsection{Proof of assertion (\ref{eq:bound_on_u}) of Lemma \ref{lem:bds}: Bounding solutions of (\ref{eq:u_zee}) uniformly for $z \in \Omega_M$}
\label{sec:u_zee}

The idea is to use Picard iteration to solve \eqref{eq:u_zee} perturbatively about $z = E$. We start by writing \eqref{eq:u_zee} as
\begin{equation} 
    ( D_x^2 + V(x) - E ) u_z = (z - E) u_z, \quad u_z (0) = 1, u_z'(0) = 0,
\end{equation}
and then as a first-order system for $\vec{u}_z := (u_z(x),u_z'(x))^\top$:
\begin{equation} \label{eq:first_ord}
   \vec{u}_z ' = H(x) \vec{u}_z + \tilde{H} \vec{u}_z, \quad \vec{u}_z(0) = (1,0)^\top,
\end{equation}
where
\begin{equation} \label{eq:H_H_tilde}
    H(x) := \begin{pmatrix} 0 & 1 \\ V(x) - E & 0 \end{pmatrix}, \quad \tilde{H} := \begin{pmatrix} 0 & 0 \\ E - z & 0 \end{pmatrix}.
\end{equation}
Note that $H(x + 1) = H(x)$ for $x \geq \rho$. 

When $z = E$ so that $\tilde{H} = 0$, \eqref{eq:first_ord} has the solution
\begin{equation}
    \vec{u}_z(x) = U_E(x) \vec{u}_z(0) 
\end{equation}
where $U_E(x)$ is the solution operator
\begin{equation}
    U_E(x) = \begin{pmatrix} u_E(x) & v_E(x) \\ u_E'(x) & v_E'(x) \end{pmatrix}.
\end{equation}
Recalling \eqref{eq:sols_2}, we have that 
\begin{equation} 
    u_E(x) = e^{- k (x - \rho)} p(x), \quad u_E'(x) = e^{- k (x - \rho)} \left( - k p(x) + p'(x) \right),
\end{equation}
where $p(x) \in C^\infty(\field{R})$ is 1-periodic, and $k > 0$, for all $x \geq \rho$. Similarly, we have that 
\begin{equation}
    v_E(x) = e^{k (x - \rho)} q(x), \quad v_E'(x) = e^{k (x - \rho)} \left( k q(x) + q'(x) \right),
\end{equation}
where $q(x)$ is 1-periodic, and $k > 0$, for all $x \geq \rho$. It is now trivial to prove pointwise bounds on each of the entries of $U_E(x)$, for example 
\begin{equation}
    |v_E(x)| \leq \sup_{0 \leq x \leq \rho} |v_E(x)| + e^{k (x - \rho)} \sup_{\rho \leq x \leq \rho + 1} |q(x)|.
\end{equation}
From these pointwise bounds it is clear that there exists a constant $C^* > 0$ such that 
\begin{equation}
\begin{split}
    \left\| U_E(x) \vec{f} \right\|_\infty &= \sup\left\{ \left| u_E(x) f_1 + v_E(x) f_2 \right| , \left| u_E'(x) f_1 + v_E'(x) f_2 \right| \right\}    \\
    &\leq 2 \sup \left\{ |u_E(x)|, |v_E(x)|, |u_E'(x)|, |v_E'(x)| \right\} \| \vec{f} \|_{\infty}   \\
    &\leq C^* \left( e^{k x} + 1 \right) \| \vec{f} \|_\infty.
\end{split}
\end{equation}

We now seek to solve \eqref{eq:first_ord} for $|z - E|$ small by Picard iteration. Using Duhamel's formula, we can re-write \eqref{eq:first_ord} as the fixed point equation
\begin{equation}
    \vec{u}_z(x) = U_E(x) \vec{u}_z(0) + \inty{0}{x}{ U_E(x - y) \tilde{H} \vec{u}_z(y) }{y}.
\end{equation}
For any $X \geq 0$, we define an operator
\begin{equation}
    T_X : \vec{f}(x) \mapsto U(x) \vec{f}(0) + \inty{0}{x}{ U(x-y) \tilde{H} \vec{f}(y) }{y} 
\end{equation}
acting on the Banach space 
\begin{equation} \label{eq:banach}
    B_X := \left\{ \vec{f} \in C([0,X];\field{C}^2) : \vec{f}(0) = \vec{u}_z(0) \right\},
\end{equation}
equipped with the sup norm. To see when this operator is a contraction we consider 
\begin{equation}
   \| T_X \vec{f}(x) - T_X \vec{g}(x) \|_{\infty} = \left\| \inty{0}{x}{ U(x-y) \tilde{H} \left( \vec{f}(y) - \vec{g}(y) \right) }{y} \right\|_{\infty}.
\end{equation}

Since $\| \tilde{H} \|_{\infty}$ is clearly bounded by $|z - E|$, we have
\begin{equation}
\begin{split}
   \| T_X \vec{f}(x) - T_X \vec{g}(x) \|_{\infty} &= \left\| \inty{0}{x}{ U(x-y) \tilde{H} \left( \vec{f}(y) - \vec{g}(y) \right) }{y} \right\|_{\infty} \\
   &\leq X \sup_{0 \leq x \leq X} \| U(x) \|_{\infty} |z - E| \| \vec{f}(y) - \vec{g}(y) \|_\infty \\
   &\leq C^* X (e^{k X} + 1) |z - E| \| \vec{f}(x) - \vec{g}(x) \|_{\infty}.
\end{split}
\end{equation}
It follows that $T_X$ is a contraction as long as 
\begin{equation}
    |z - E| < \frac{1}{ C^* X (e^{k X} + 1) }.
\end{equation}
In this case we have the following formula for $\vec{u}_z(x)$:
\begin{equation} \label{eq:form}
    \vec{u}_z(x) = \lim_{n \rightarrow \infty} T^n \vec{u}_{z,0},
\end{equation}
where $\vec{u}_{z,0}$ denotes the function equal to the constant $\vec{u}_z(0)$ on the interval $[0,X]$. \eqref{eq:form} written out is
\begin{equation} \label{eq:form_2}
\begin{split}
    \vec{u}_z(x)  &= U_E(x) \vec{u}_z(0) + \inty{0}{x}{ U_E(x-y) \tilde{H} U_E(y) \vec{u}_z(0) }{y} \\
    & + \inty{0}{x}{ U_E(x-y) \tilde{H} \inty{0}{y}{ U_E(y-y_1) \tilde{H} U_E(y_1) \vec{u}_z(0) }{y_1} }{y} + ...
\end{split}
\end{equation}
The second term on the right-hand side can be bounded as
\begin{equation}
\begin{split}
    \left| \inty{0}{x}{ U_E(x-y) \tilde{H} U_E(y) \vec{u}_z(0) }{y} \right| &\leq x \sup_{0 \leq y \leq x} \left| U_E(x-y) \tilde{H} U_E(y) \vec{u}_z(0) \right|   \\
    &\leq x C^* ( e^{k x} + 1 ) |z - E| C^* ( e^{k x} + 1 )  \\
    &= x |z - E| \left( C^* (e^{k x} + 1) \right)^2.
\end{split}
\end{equation}
Bounding the other terms on the right-hand side similarly, we have
\begin{equation}
\begin{split}
    & | \vec{u}_z(x) | \leq C^*( e^{k x} + 1) + x |z - E| (C^* (e^{k x} + 1) )^2 + x^2 |z - E|^2 ( C^* (e^{k x} + 1) )^3 + ...  \\
    &\hspace{.35cm} = C^*( e^{k x} + 1) \left( 1 + x |z - E| (C^* (e^{k x} + 1) ) + x^2 |z - E|^2 ( C^* (e^{k x} + 1) )^2 + ... \right).
\end{split}
\end{equation}
Hence, whenever $x |z - E| C^* (e^{k x} + 1) < 1$, we have that
\begin{equation} 
    | \vec{u}_z(x) | \leq \frac{ C^* ( e^{k x} + 1) }{ 1 - x |z - E| C^* ( e^{k x} + 1 ) }.
\end{equation}
By taking 
\begin{equation}
    |z - E| \leq \frac{1}{2 x C^* (e^{k x} + 1) }
\end{equation}
we have the desired bound \eqref{eq:bound_on_u}
\begin{equation} 
    | \vec{u}_z(x) | \leq 2 C^* ( e^{k x} + 1 ).
\end{equation}

\subsection{Proof of assertion (\ref{eq:bound_dz_u}) of Lemma \ref{lem:bds}: Bounding solutions of (\ref{eq:dz_u_zee}) uniformly for $z \in \Omega_M$}
\label{sec:dz_u_zee}

We start by re-writing \eqref{eq:dz_u_zee} as
\begin{equation} \label{eq:to_bddd}
    ( D_x^2 + V(x) - E ) \de_z u_z = (z - E) \de_z u_z + u_z, \quad \de_z u_z (0) = 0, \de_z u_z'(0) = 0.
\end{equation}
We can write \eqref{eq:to_bddd} as a first-order system just as in \eqref{eq:first_ord}. Using Duhamel's formula and the fact that $\de_z \vec{u}_z(0) = 0$, we have that
\begin{equation}
    \de_z \vec{u}_z(x) = \inty{0}{x}{ U(x - y) \left[ \tilde{H} \de_z \vec{u}_z(y) + \vec{u}_z(y) \right] }{y},
\end{equation}
where $\tilde{H}$ is as in \eqref{eq:H_H_tilde}. The map
\begin{equation}
    T_X' : \vec{f}(x) \mapsto \inty{0}{x}{ U_E(x-y) \left[ \tilde{H} \vec{f}(y) + \vec{u}_z(y) \right] }{y}
\end{equation}
is clearly a contraction on the Banach space of functions on $0 \leq x \leq X$ with $\vec{f}(0) = 0$ equipped with the sup norm under the same conditions as before i.e. as long as $X C^* ( e^{k X} + 1 ) |z - E| < 1$. Starting the iteration with the function equal to $0$ over the whole interval, we have the analogous representation as \eqref{eq:form_2} for the solution of \eqref{eq:to_bddd}:
\begin{equation} \label{eq:analogous}
\begin{split}
    &\de_z \vec{u}_z(x) = \inty{0}{x}{ U_E(x-y) \vec{u}_z(y) }{y} \\
    & + \inty{0}{x}{ U_E(x-y) \tilde{H} \inty{0}{y}{ U_E(y-y_1) \vec{u}_z(y_1) }{y_1} }{y} + ...
\end{split}
\end{equation}
Replacing $\vec{u}_z(y)$ everywhere by its expansion \eqref{eq:form_2}, we have for the first term on the right-hand side in \eqref{eq:analogous}
\begin{equation}
\begin{split}
    &\inty{0}{x}{ U_E(x-y) \vec{u}_z(y) }{y} \\
    &= \inty{0}{x}{ U_E(x) \vec{u}_z(0) }{y} + \inty{0}{x}{ U_E(x-y) \inty{0}{y}{ U_E(y - y_1) \tilde{H} U_E(y_1) \vec{u}_z(0) }{y_1} }{y} + ...,
\end{split}
\end{equation}
and hence
\begin{equation}
\begin{split}
    \inty{0}{x}{ U_E(x - y) \vec{u}_z(y) }{y} &\leq x C^* (e^{k x} + 1) + x^2 \left( C^* (e^{k x} + 1) \right)^2 |z - E| + ...   \\
    &\leq 2 x C^* (e^{k x} + 1)
\end{split}
\end{equation}
whenever $ x C^* (e^{k x} + 1) |z - E| \leq \frac{1}{2}$. As for the second term in \eqref{eq:analogous} we have
\begin{equation}
\begin{split}
    &\inty{0}{x}{ U_E(x-y) \tilde{H} \inty{0}{y}{ U_E(y-y_1) \vec{u}_z(y_1) }{y_1} }{y} \\
    &= \inty{0}{x}{ U_E(x-y) \tilde{H} \inty{0}{y}{ U_E(y) \vec{u}_z(0) }{y_1} }{y} \\
    & \hspace{.125cm} + \inty{0}{x}{ U_E(x-y) \tilde{H} \inty{0}{y}{ U_E(y - y_1) \inty{0}{y_1}{ U_E(y_1 - y_2) \tilde{H} U_E(y_2) \vec{u}_z(0) }{y_2} }{y_1} }{y} + ...,
\end{split}
\end{equation}
and hence
\begin{equation}
\begin{split}
    &\left| \inty{0}{x}{ U_E(x-y) \tilde{H} \inty{0}{y}{ U_E(y-y_1) \vec{u}_z(y_1) }{y_1} }{y} \right| \\
    &\leq x^2 \left( C^* ( e^{k x} + 1 ) \right)^2 |z - E| + x^3 \left( C^* ( e^{k x} + 1 ) \right)^3 |z - E|^2 + ... \\
    &\leq 2 x^2 \left( C^* (e^{k x} + 1) \right)^2 |z - E|
\end{split}
\end{equation}
whenever $x C^* (e^{k x} + 1) |z - E| < \frac{1}{2}$. Bounding successive terms of \eqref{eq:analogous} similarly, we have that
\begin{equation}
\begin{split}
    & \left| \de_z \vec{u}_z(x) \right| \leq \\
    & \hspace{.25cm} 2 x C^* (e^{k x} + 1) + 2 x^2 \left( C^* (e^{k x} + 1) \right)^2 |z - E| + 2 x^3 \left( C^* (e^{k x} + 1) \right)^3 |z - E|^2 + ... \\
    & \hspace{1cm} \leq 4 x C^* ( e^{k x } + 1 )
\end{split}
\end{equation}
whenever $x C^* (e^{k x} + 1) |z - E| < \frac{1}{2}$, as desired.

\section{Proof of Corollary \ref{cor:imag_asymp}} \label{sec:Theta}

In this section we prove Corollary \ref{cor:imag_asymp}. By definition we have that
\begin{equation}
    \frac{\Theta(E)}{\de_z \Theta(E)} = \frac{ u_E'(M) - i \sqrt{E} u_E(M) }{ \left. \de_z u_z'(M) \right|_{z = E} - i \left( \frac{1}{2 \sqrt{E}} \right) u_E(M) - i \sqrt{E} \left. \de_z u_z(M) \right|_{z = E} }.
\end{equation}
From equation \eqref{eq:dis} and Lemma \ref{lem:bound_uz} we have
\begin{equation} \label{eq:defff}
    \frac{\Theta(E)}{\de_z \Theta(E)} = \frac{ u_E'(M) - i \sqrt{E} u_E(M) }{ - \left( \inty{0}{M}{ u_E^2 (y) }{y} \right) \left( v_E'(M) - i \sqrt{E} v_E(M) \right) + O(M e^{- k M}) }.
\end{equation}
Since the denominator grows exponentially (recall equations \eqref{eq:nex}-\eqref{eq:v_bd_bel}) and using Lemma \ref{lem:bound_uz} we have from Taylor's theorem that
\begin{equation} \label{eq:defff_2}
    \frac{\Theta(E)}{\de_z \Theta(E)} = - \left( \inty{0}{M}{ u_E^2 (y) }{y} \right)^{-1} \frac{ u_E'(M) - i \sqrt{E} u_E(M) }{ v_E'(M) - i \sqrt{E} v_E(M) } + O(M e^{- 4 k M}).
\end{equation}
An elementary calculation shows that
\begin{equation}
\begin{split}
    &\frac{ u_E'(M) - i \sqrt{E} u_E(M) }{ v_E'(M) - i \sqrt{E} v_E(M) } = \\ 
    &\frac{ u_E'(M) v_E'(M) + E u_E(M) v_E(M) - i \sqrt{E} \left( u_E(M) v_E'(M) - u_E'(M) v_E(M) \right) }{ ( v_E'(M) )^2 + E ( v_E(M) )^2 },
\end{split}
\end{equation}
where we recognize the imaginary part in the numerator as the Wronskian of $u_E(x)$ and $v_E(x)$ evaluated at $M$, which equals $1$. We conclude finally that
\begin{equation}
    \frac{ \Theta(E) }{ \de_z \Theta(E) } = \frac{ i \sqrt{E} - \left[ u_E'(M) v_E'(M) + E u_E(M) v_E(M) \right] }{ \left[ ( v_E'(M) )^2 + E ( v_E(M) )^2 \right] \inty{0}{M}{ u_E^2 (y) }{y} } + O(M e^{- 4 k M}).
\end{equation}

\section{Generalization of our main result when parity symmetry does not necessarily hold} \label{sec:generalization_noparity}

In this section we present a generalization of the result presented in Section \ref{sec:main_result} which covers the case where the parity symmetry assumption (Assumption \ref{as:parity}) does not hold. Although many of the ideas presented in the previous sections, particularly the use of Floquet theory, carry over to this setting, there are significant differences between the results and proofs. We review the parts of the proof which are significantly different in this case in Section \ref{sec:gen_no_parity}.

We again consider the one-dimensional Schr\"odinger operator \eqref{eq:Schro}, assuming the potential $V$ is smooth and 1-periodic outside the interval $[-\rho,\rho]$ for some $\rho > 0$ (Assumption \ref{as:V_assump}). We again assume the existence of a bound state $\Phi(x)$ with positive eigenvalue $E > 0$ \eqref{eq:bound}. When $\Phi(0) \neq 0$ (note that after multiplying by a constant we can assume $\Phi(0) = 1$), define $u_{\vec{\zeta}}(x) \in C^\infty([-M,M])$ and $v_{\vec{\zeta}}(x) \in C^\infty([-M,M])$ as the solutions of
\begin{equation} \label{eq:eq_nosymmetry_phi_non-zero}
\begin{split}
    &( D_x^2 + V(x) - z ) u_{\vec{\zeta}} = 0, \quad u_{\vec{\zeta}}(0) = 1, u_{\vec{\zeta}}'(0) = w    \\
    &( D_x^2 + V(x) - z ) v_{\vec{\zeta}} = 0, \quad v_{\vec{\zeta}}(0) = \frac{ - w }{ 1 + w^2 }, v_{\vec{\zeta}}'(0) = \frac{ 1 }{ 1 + w^2 },
\end{split}
\end{equation}
for arbitrary $\vec{\zeta} = (w,z) \in \field{C}^2$, and let $\vec{\eta} = (\Phi'(0),E)$. Note that with these definitions $u_{\vec{\zeta}}$ and $v_{\vec{\zeta}}$ form a fundamental solution set whose Wronskian is $1$ and $u_{\vec{\eta}}(x) = \Phi(x)$ for $|x| \leq M$. When $\Phi(0) = 0$ (note that after multiplying by a constant we can assume $\Phi'(0) = 1$), define $u_{\vec{\zeta}}(x)$ and $v_{\vec{\zeta}}(x)$ by
\begin{equation} \label{eq:eq_nosymmetry_phiprime_non-zero}
\begin{split}
    &( D_x^2 + V(x) - z ) u_{\vec{\zeta}} = 0, \quad u_{\vec{\zeta}}(0) = -w, u_{\vec{\zeta}}'(0) = 1    \\
    &( D_x^2 + V(x) - z ) v_{\vec{\zeta}} = 0, \quad v_{\vec{\zeta}}(0) = \frac{ - 1 }{ 1 + w^2 }, v_{\vec{\zeta}}'(0) = \frac{ - w }{ 1 + w^2 },
\end{split}
\end{equation}
and let $\vec{\eta} = (0,E)$ (see Remark \ref{rem:convention} for an explanation of the convention we choose in \eqref{eq:eq_nosymmetry_phiprime_non-zero}). Again, $u_{\vec{\zeta}}$ and $v_{\vec{\zeta}}$ form a fundamental solution set and $u_{\vec{\eta}}(x) = \Phi(x)$ for $|x| \leq M$. Since $\Phi(x)$ is a bound state, via Floquet theory we have that $u_{\vec{\eta}}$ must exponentially decay and $v_{\vec{\eta}}$ must exponentially grow. 
We define
\begin{equation} \label{eq:X_again}
    ( X^\pm_1(\vec{\zeta}) , X^\pm_2(\vec{\zeta}) ) := \left( u_{\vec{\zeta}}(\pm M), u_{\vec{\zeta}}'(\pm M) \right),
\end{equation}
and let 
\begin{equation} \label{eq:thet_again}
    \Theta^\pm(\vec{\zeta}) := X^\pm_2(\vec{\zeta}) \mp i \sqrt{z} X^\pm_1(\vec{\zeta}).
\end{equation}
With the same conventions on the square root as in the case where parity symmetry holds (see below \eqref{eq:theta}), we have that when $E > 0$, $z^*$ is a resonance of $H_{\rm trunc}$ if and only if 
\begin{equation} \label{eq:condition}
    \vec{\Theta}(\vec{\zeta}^*) = 0, \text{ where } \vec{\Theta}(\vec{\zeta}) := \begin{pmatrix} \Theta^+(\vec{\zeta}) \\ \Theta^-(\vec{\zeta}) \end{pmatrix},
\end{equation}
where $\vec{\zeta}^* = (w^*,z^*)$ for some $w^* \in \field{C}$. 

In this setting, the counterpart of Theorem \ref{th:main_theorem} is
\begin{theorem} \label{th:main_theorem_noparity}
Let $V(x)$ satisfy Assumption \ref{as:V_assump}, let $\rho > 0$ be as in \eqref{eq:eventual_per}, and let $\Phi$ be a bound state with eigenvalue $E > 0$ as in \eqref{eq:bound}. Then there exist $k > 0$ and an $M_0 > \rho \geq 0$ such that for all $M \geq M_0$,
\begin{enumerate}
\item $\vec{\Theta}(\vec{\zeta})$ given by \eqref{eq:condition} has a unique root $\vec{\zeta}^*$ in the ball
\begin{equation}
    \Omega_M = \left\{ \vec{\zeta} : | \vec{\zeta} - \vec{\eta} | \leq \frac{ 1 }{ M^2 } e^{- k M} \right\}.
\end{equation}
\item The location of the root $\vec{\zeta}^*$ can be precisely characterized as
\begin{equation} \label{eq:res_expansion}
    \vec{\zeta}^* = \vec{\eta} - \Xi \vec{\Theta}(\vec{\eta}) + O(e^{- 4 k M}),
\end{equation}
where $\Xi$ is the matrix
\begin{equation} \label{eq:Xi}
    \Xi := \frac{1}{\mathcal{N}(\vec{\eta})} \begin{pmatrix} \de_z \Theta^-(\vec{\eta}) & - \de_z \Theta^+(\vec{\eta}) \\ - \de_w \Theta^-(\vec{\eta}) & \de_w \Theta^+(\vec{\eta}) \end{pmatrix}, \\
\end{equation}
where $\mathcal{N} (\vec{\eta}) := \de_w \Theta^+(\vec{\eta}) \de_z \Theta^-(\vec{\eta}) - \de_z \Theta^+(\vec{\eta}) \de_w \Theta^-(\vec{\eta})$.
\end{enumerate}
\end{theorem}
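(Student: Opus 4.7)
The plan is to extend the scalar fixed-point argument of Sections \ref{sec:fixed}--\ref{sec:asymp} to a vector fixed-point argument on $\vec{\zeta} = (w,z) \in \field{C}^2$. Observe that the matrix $\Xi$ defined in \eqref{eq:Xi} is precisely $[D\vec{\Theta}(\vec{\eta})]^{-1}$ whenever $\mathcal{N}(\vec{\eta}) \neq 0$, where
\begin{equation}
    D\vec{\Theta}(\vec{\eta}) := \begin{pmatrix} \de_w \Theta^+(\vec{\eta}) & \de_z \Theta^+(\vec{\eta}) \\ \de_w \Theta^-(\vec{\eta}) & \de_z \Theta^-(\vec{\eta}) \end{pmatrix}.
\end{equation}
Setting
\begin{equation}
    \vec{\Psi}(\vec{\zeta}) := \vec{\zeta} - \Xi \, \vec{\Theta}(\vec{\zeta}),
\end{equation}
we have $\vec{\Theta}(\vec{\zeta}^*) = 0$ if and only if $\vec{\Psi}(\vec{\zeta}^*) = \vec{\zeta}^*$, and by construction $D\vec{\Psi}(\vec{\eta}) = 0$. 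I will apply the Banach fixed point theorem to $\vec{\Psi}$ on $\Omega_M$ to obtain part (1); the asymptotic formula \eqref{eq:res_expansion} then follows from the telescoping iterate argument of Section \ref{sec:asymp} applied to $\vec{\zeta}^{(n+1)} := \vec{\Psi}(\vec{\zeta}^{(n)})$ starting from $\vec{\zeta}^{(0)} := \vec{\eta}$.

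The contraction property reduces to three estimates analogous to Lemmas \ref{lem:bound_de_z_thet} and \ref{lem:bound_de_z_2_thet}: the upper bound $|\vec{\Theta}(\vec{\eta})| \leq C e^{-kM}$; the lower bound $|\mathcal{N}(\vec{\eta})| \geq C e^{2kM}$ together with the upper bounds $|\de_w \Theta^\pm(\vec{\eta})|, |\de_z \Theta^\pm(\vec{\eta})| \leq C e^{kM}$, which jointly yield $\|\Xi\| \leq C e^{-kM}$; and the uniform bound $\|\partial^2 \vec{\Theta}(\vec{\zeta})\| \leq C e^{kM}$ on $\Omega_M$. Granted these, the identity $D\vec{\Psi}(\vec{\zeta}) = \Xi \bigl( D\vec{\Theta}(\vec{\eta}) - D\vec{\Theta}(\vec{\zeta}) \bigr)$ combined with the mean value theorem gives $\|D\vec{\Psi}(\vec{\zeta})\| \leq \|\Xi\| \, |\vec{\zeta} - \vec{\eta}| \sup_{\Omega_M} \|\partial^2 \vec{\Theta}\| = O(e^{-kM}/M^2)$, below $1/2$ for $M$ large, while $|\vec{\Psi}(\vec{\eta}) - \vec{\eta}| = |\Xi \, \vec{\Theta}(\vec{\eta})| = O(e^{-2kM})$, well within the radius $\frac{1}{M^2} e^{-kM}$ of $\Omega_M$. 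The upper bound on $|\vec{\Theta}(\vec{\eta})|$ is immediate since $u_{\vec{\eta}} = \Phi$ decays at the Floquet rate $k$ in both directions (Lemma \ref{lem:bound_uz}). The upper bounds on the first and second derivatives are obtained by adapting the Picard iteration of Section \ref{sec:bound_de_z_2_thet} to the two-parameter system about $\vec{\zeta} = \vec{\eta}$, using that $\de_w u_{\vec{\zeta}}|_{\vec{\eta}}$ solves the homogeneous ODE and is therefore a linear combination of $u_{\vec{\eta}}$ and $v_{\vec{\eta}}$, while $\de_z u_{\vec{\zeta}}|_{\vec{\eta}}$ admits a variation-of-parameters representation as in \eqref{eq:dis}.

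The main obstacle is the lower bound $|\mathcal{N}(\vec{\eta})| \geq C e^{2kM}$. In the symmetric case only a single growing derivative was required, whereas here both columns of $D\vec{\Theta}(\vec{\eta})$ have entries of the same order $e^{kM}$, and I must rule out asymptotic linear dependence. A direct computation (treating the generic case $\Phi(0) \neq 0$; the case $\Phi(0) = 0$ is analogous using \eqref{eq:eq_nosymmetry_phiprime_non-zero}) gives $\de_w u_{\vec{\zeta}}|_{\vec{\eta}} = v_{\vec{\eta}} + c \, u_{\vec{\eta}}$ for an explicit constant $c$, while one-sided variation of parameters as in Section \ref{sec:ests_2} gives
\begin{equation}
    \de_z u_{\vec{\zeta}}|_{\vec{\eta}}(M) = -J_+ v_{\vec{\eta}}(M) + O(M e^{-kM}), \quad \de_z u_{\vec{\zeta}}|_{\vec{\eta}}(-M) = J_- v_{\vec{\eta}}(-M) + O(M e^{-kM}),
\end{equation}
where $J_+ := \int_0^M u_{\vec{\eta}}^2 \, \mathrm{d}y$ and $J_- := \int_{-M}^0 u_{\vec{\eta}}^2 \, \mathrm{d}y$, with analogous formulas for the $x$-derivatives. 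Substituting into the columns of $D\vec{\Theta}(\vec{\eta})$ yields the leading-order structure
\begin{equation}
    D\vec{\Theta}(\vec{\eta}) = \begin{pmatrix} a_+ & -J_+ a_+ \\ a_- & \phantom{-}J_- a_- \end{pmatrix} + \text{lower order},
\end{equation}
where $a_\pm := v_{\vec{\eta}}'(\pm M) \mp i \sqrt{E} \, v_{\vec{\eta}}(\pm M)$, and hence
\begin{equation}
    \mathcal{N}(\vec{\eta}) = a_+ a_- (J_+ + J_-) + \text{lower order}.
\end{equation}
The factor $J_+ + J_- = \int_{-M}^M u_{\vec{\eta}}^2 \, \mathrm{d}y \to \|\Phi\|_{L^2}^2 > 0$, while $|a_\pm|^2 = (v_{\vec{\eta}}'(\pm M))^2 + E (v_{\vec{\eta}}(\pm M))^2 \geq C e^{2kM}$ by the Wronskian--Cauchy--Schwarz argument of \eqref{eq:v_bd_bel} applied separately on each half-line, producing $|\mathcal{N}(\vec{\eta})| \geq C e^{2kM}$ as required.
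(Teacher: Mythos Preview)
Your proposal is correct and follows essentially the same approach as the paper: the Newton-type map $\vec{\Psi}(\vec{\zeta}) = \vec{\zeta} - \Xi\,\vec{\Theta}(\vec{\zeta})$, the three estimates (exponential smallness of $\vec{\Theta}(\vec{\eta})$, the lower bound $|\mathcal{N}(\vec{\eta})|\geq C e^{2kM}$, and the $O(e^{kM})$ bounds on first and second derivatives), and the identification of the dominant term of $\mathcal{N}(\vec{\eta})$ as $a_+ a_-\int_{-M}^{M} u_{\vec{\eta}}^2$ all coincide with the paper's Section~\ref{sec:gen_no_parity}. The only cosmetic difference is that you package the contraction estimate via the matrix identity $D\vec{\Psi}(\vec{\zeta}) = \Xi\bigl(D\vec{\Theta}(\vec{\eta}) - D\vec{\Theta}(\vec{\zeta})\bigr)$ and operator norms, whereas the paper writes out and bounds each entry of the Jacobian of $\vec{\Psi}$ separately.
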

The overall idea of the proof of Theorem \ref{th:main_theorem_noparity} is the same as that of Theorem \ref{th:main_theorem}. We give a sketch of the proof here, postponing a discussion of the details which differ from the proof of Theorem \ref{th:main_theorem} to Section \ref{sec:gen_no_parity}. Define $\Psi: \field{C}^2 \rightarrow \field{C}^2$ by
\begin{equation} \label{eq:Psi_2}
    \Psi(\vec{\zeta}) := \vec{\zeta} - \Xi \vec{\Theta}(\vec{\zeta}),
\end{equation}
where $\Xi$ is defined by \eqref{eq:Xi}. Assuming $\mathcal{N}(\vec{\eta}) \neq 0$, then $\text{det } \Xi = \frac{1}{[\mathcal{N}(\vec{\eta})]^3}$ so that $\Xi$ is invertible, and hence 
\begin{equation}
    \Psi(\vec{\zeta}) = \vec{\zeta} \iff \vec{\Theta}(\vec{\zeta}) = 0.
\end{equation}
It remains to show that $\Psi$ is a contraction in the ball
\begin{equation} \label{eq:omega_M_2}
    \Omega_M := \left\{ \vec{\zeta} \in \field{C}^2 : | \vec{\zeta} - \vec{\eta} | < \frac{1}{M^2} e^{- k M} \right\},
\end{equation}
where $k$ characterizes the exponential decay of the bound state just as in the proof of Theorem \ref{th:main_theorem} (recall Section \ref{sec:bound_de_z_thet}). Parts (1) and (2) of the theorem then follow from the Banach fixed point theorem and the asymptotic formula for the fixed point as $\lim_{n \rightarrow \infty} \Psi^n(\vec{\eta})$ respectively. For the proofs that $\mathcal{N}(\vec{\eta}) \neq 0$ and that $\Psi$ defined by \eqref{eq:Psi_2} with $\Xi$ as in \eqref{eq:Xi} is a contraction in $\Omega_M$, see Section \ref{sec:gen_no_parity}.

\begin{remark}
The matrix $\Xi$ is the inverse of
\begin{equation}
    \begin{pmatrix} \de_w \Theta^+(\vec{\eta}) & \de_z \Theta^+(\vec{\eta}) \\ \de_w \Theta^-(\vec{\eta}) & \de_z \Theta^-(\vec{\eta}) \end{pmatrix},
\end{equation}
which is the Jacobian of the map
\begin{equation}
    \vec{\zeta} \mapsto \left( \Theta^+(\vec{\zeta}) , \Theta^-(\vec{\zeta}) \right)
\end{equation}
evaluated at $\vec{\zeta} = \vec{\eta}$. This is consistent with the parity symmetry case (compare \eqref{eq:res_expansion} with \eqref{eq:asymp}), where $\frac{1}{\de_z \Theta(E)}$ is the inverse of $\de_z \Theta(E)$, which is the Jacobian of the map
\begin{equation}
    z \mapsto \Theta(z)
\end{equation}
evaluated at $z = E$.
\end{remark}

Just as in Corollary \ref{cor:imag_asymp}, we can obtain an expression for the first order correction appearing in \eqref{eq:res_expansion} in terms of $u_{\vec{\zeta}}$ and $v_{\vec{\zeta}}$ (the solutions of \eqref{eq:eq_nosymmetry_phi_non-zero}). This expression implies the following asymptotics for $w^*$ and $z^*$, where $\vec{\zeta^*} = (w^*,z^*)$:
\begin{corollary} \label{cor:correction_noparity}
Let $\vec{\zeta^*} = (w^*,z^*)$ be as in \eqref{eq:res_expansion}. Then 
\begin{equation} \label{eq:w_star}
\begin{split}
    w^* &= w_0 - \left[ \inty{-M}{0}{ u_{\vec{\eta}}^2(y) }{y} \left( v'_{\vec{\eta}}(-M) + i \sqrt{E} v_{\vec{\eta}}(-M) \right) \left( u_{\vec{\eta}}'(M) - i \sqrt{E} u_{\vec{\eta}}(M) \right) \right.   \\
    &\phantom{=++} \left. + \inty{0}{M}{ u_{\vec{\eta}}^2(y) }{y} \left( v'_{\vec{\eta}}(M) - i \sqrt{E} v_{\vec{\eta}}(M) \right) \left( u_{\vec{\eta}}'(-M)  + i \sqrt{E} u_{\vec{\eta}}(-M) \right) \right]  \\
    &\phantom{=++} \times \left[ \inty{-M}{M}{ u_{\vec{\eta}}^2(y) }{y} \left( v_{\vec{\eta}}'(M) - i \sqrt{E} v_{\vec{\eta}}(M) \right) \left( v_{\vec{\eta}}'(-M) + i \sqrt{E} v_{\vec{\eta}}(-M) \right) \right]^{-1} \\
    &\phantom{=+++++} + O(e^{- 4 k M}),
\end{split}
\end{equation}
where $w_0 = \Phi'(0)$ when $\Phi(0) \neq 0$, $w_0 = 0$ when $\Phi(0) = 0$, and
\begin{equation} \label{eq:z_star}
\begin{split}
    z^* &= E - \left[ \left( v_{\vec{\eta}}'(M) - i \sqrt{E} v_{\vec{\eta}}(M) \right) \left( u_{\vec{\eta}}'(-M) + i \sqrt{E} u_{\vec{\eta}}(-M) \right) \right.     \\
    &\phantom{=++++} \left. - \left( v_{\vec{\eta}}'(-M) + i \sqrt{E} v_{\vec{\eta}}(-M) \right) \left( u_{\vec{\eta}}'(M) - i \sqrt{E} u_{\vec{\eta}}(M) \right) \right] \\
    &\phantom{=++} \times \left[ \inty{-M}{M}{ u_{\vec{\eta}}^2(y) }{y} \left( v_{\vec{\eta}}'(M) - i \sqrt{E} v_{\vec{\eta}}(M) \right) \left( v_{\vec{\eta}}'(-M) + i \sqrt{E} v_{\vec{\eta}}(-M) \right) \right]^{-1} \\
    &\phantom{=++++} + O(e^{- 4 k M}).
\end{split}
\end{equation}
\end{corollary}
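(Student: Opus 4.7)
The plan is to compute the leading-order correction $-\Xi \vec{\Theta}(\vec{\eta})$ from \eqref{eq:res_expansion} explicitly, in the same spirit as the derivation of Corollary \ref{cor:imag_asymp}. Reading off the two components of $-\Xi \vec{\Theta}(\vec{\eta})$ from the definition of $\Xi$ in \eqref{eq:Xi}, one has
\begin{equation*}
w^* - w_0 = -\frac{ \de_z \Theta^-(\vec{\eta}) \, \Theta^+(\vec{\eta}) - \de_z \Theta^+(\vec{\eta}) \, \Theta^-(\vec{\eta}) }{ \mathcal{N}(\vec{\eta}) } + O(e^{-4kM}),
\end{equation*}
and similarly for $z^* - E$ with $\partial_w$ in place of $\partial_z$ in the numerator. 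So the task reduces to computing, to leading order, the four partial derivatives $\de_z \Theta^\pm(\vec{\eta})$ and $\de_w \Theta^\pm(\vec{\eta})$, together with $\Theta^\pm(\vec{\eta})$ itself and $\mathcal{N}(\vec{\eta})$.

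Next I would compute each building block. For $\Theta^\pm(\vec{\eta}) = u_{\vec{\eta}}'(\pm M) \mp i\sqrt{E} u_{\vec{\eta}}(\pm M)$, we use that $u_{\vec{\eta}} = \Phi$ is the decaying Floquet solution on both sides, so $\Theta^\pm(\vec{\eta}) = O(e^{-kM})$ (the analogue of Lemma \ref{lem:bound_uz} on each half-line). For $\de_z \Theta^\pm(\vec{\eta})$, differentiating \eqref{eq:eq_nosymmetry_phi_non-zero} in $z$ gives an inhomogeneous equation with forcing $u_{\vec{\zeta}}$ and zero initial data, which is solved by variation of parameters exactly as in Section \ref{sec:ests_2}:
\begin{equation*}
\de_z u_{\vec{\zeta}}(x) = \left( \inty{0}{x}{ v_{\vec{\zeta}} u_{\vec{\zeta}} }{y} \right) u_{\vec{\zeta}}(x) - \left( \inty{0}{x}{ u_{\vec{\zeta}}^2 }{y} \right) v_{\vec{\zeta}}(x),
\end{equation*}
and the analogous formula for $\de_z u_{\vec{\zeta}}'$. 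Evaluating at $\vec{\zeta}=\vec{\eta}$ and $x = \pm M$ and dropping all terms involving a trailing $u_{\vec{\eta}}(\pm M)$ or $u_{\vec{\eta}}'(\pm M)$ (which are exponentially subdominant compared to terms trailing in $v_{\vec{\eta}}$), one obtains
\begin{equation*}
\de_z \Theta^\pm(\vec{\eta}) = -\Bigl( \inty{0}{\pm M}{ u_{\vec{\eta}}^2 }{y} \Bigr)\bigl( v_{\vec{\eta}}'(\pm M) \mp i\sqrt{E}\, v_{\vec{\eta}}(\pm M) \bigr) + O(M e^{-kM}).
\end{equation*}
For $\de_w \Theta^\pm(\vec{\eta})$, note that the ODE in \eqref{eq:eq_nosymmetry_phi_non-zero} does not depend on $w$; only the initial data do. So $\de_w u_{\vec{\zeta}}$ solves the homogeneous equation with specific initial data, and a short calculation using the initial conditions expresses it as a linear combination $a\, u_{\vec{\zeta}} + b\, v_{\vec{\zeta}}$ with explicit $a,b$. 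The dominant contribution at $\pm M$ again comes from the $v_{\vec{\eta}}$ piece, yielding $\de_w \Theta^\pm(\vec{\eta})$ proportional to $v_{\vec{\eta}}'(\pm M)\mp i\sqrt{E}\,v_{\vec{\eta}}(\pm M)$ up to exponentially smaller terms.

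With these leading expressions in hand, one assembles $\mathcal{N}(\vec{\eta}) = \de_w\Theta^+ \de_z \Theta^- - \de_z\Theta^+ \de_w\Theta^-$. The factors of $(v_{\vec{\eta}}'(M)-i\sqrt{E}v_{\vec{\eta}}(M))$ and $(v_{\vec{\eta}}'(-M)+i\sqrt{E}v_{\vec{\eta}}(-M))$ factor out of both $\de_z\Theta^\pm$ and $\de_w\Theta^\pm$ up to the subdominant remainder, and what remains is a combination of the $u_{\vec{\eta}}^2$ integrals over $[0,M]$, $[-M,0]$, and $[-M,M]$. The same factorization also appears in the numerators $\de_z\Theta^-\Theta^+ - \de_z\Theta^+\Theta^-$ and $\de_w\Theta^+\Theta^- - \de_w\Theta^-\Theta^+$, which lets one cancel common factors cleanly. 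For the $z^*$ formula, the two Wronskian identities $u_{\vec{\eta}}'(\pm M) v_{\vec{\eta}}(\pm M) - u_{\vec{\eta}}(\pm M) v_{\vec{\eta}}'(\pm M) = -1$ will be needed to collapse cross-terms into the simple numerator displayed in \eqref{eq:z_star}. Finally, one justifies the $O(e^{-4kM})$ remainder exactly as in the proof of Corollary \ref{cor:imag_asymp}: the denominator grows like $e^{2kM}$ on each side while the dropped terms are at most $Me^{-kM}$ smaller, and a single application of Taylor's theorem to a quotient of exponentially large and small quantities pushes the remainder down to $O(e^{-4kM})$.

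The main obstacle is purely organizational: tracking which of the four factors ($u_{\vec{\eta}}$ or $v_{\vec{\eta}}$, and at $+M$ or $-M$) contributes at leading order through each of the six quantities $\Theta^\pm,\de_z\Theta^\pm,\de_w\Theta^\pm$, and then verifying that after plugging into $-\Xi\vec{\Theta}(\vec{\eta})$ the cancellations conspire to produce the clean denominators in \eqref{eq:w_star} and \eqref{eq:z_star}. There is no new analytic input beyond the tools already developed in Sections \ref{sec:bound_de_z_thet} and \ref{sec:Theta}; the case $\Phi(0)=0$ is handled identically starting from \eqref{eq:eq_nosymmetry_phiprime_non-zero}, which is why $w_0 = 0$ in that case.
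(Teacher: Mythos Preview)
Your proposal is correct and follows essentially the same route as the paper's own proof in Section \ref{sec:proof_correction_noparity}: write out the two components of $\Xi\vec{\Theta}(\vec{\eta})$, substitute the variation-of-parameters formula \eqref{eq:de_z_u} for $\de_z u_{\vec{\zeta}}$ and the explicit linear combination \eqref{eq:de_w_u} for $\de_w u_{\vec{\zeta}}$, retain only the terms carrying a factor of $v_{\vec{\eta}}$ or $v_{\vec{\eta}}'$ at $\pm M$, and read off the result. One small correction: the Wronskian identities you mention are \emph{not} needed for the $z^*$ numerator in \eqref{eq:z_star}---that expression is exactly what falls out of $\de_w\Theta^+\Theta^- - \de_w\Theta^-\Theta^+$ once you keep only the $v_{\vec{\eta}}$-terms in $\de_w\Theta^\pm$, with no further simplification (the Wronskian trick is used only in the parity-symmetric reduction of Section \ref{sec:Theta}, where $u$ and $v$ appear at the \emph{same} endpoint).
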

\begin{proof} See Section \ref{sec:proof_correction_noparity}.
\end{proof}

The consequences of Theorem \ref{th:main_theorem_noparity} and Corollary \ref{cor:correction_noparity} can be summarized as follows:

\begin{corollary}
Let $V, \Phi, E > 0, \rho > 0, u_{\vec{\zeta}}$, and $v_{\vec{\zeta}}$ be as in Theorem \ref{th:main_theorem_noparity} and Corollary \ref{cor:correction_noparity}. Then there exist $k > 0$ and an $M_0 > \rho \geq 0$ such that for all $M \geq M_0$, $H_{\rm trunc}$ has a resonance $z^*$ given up to corrections of $O(e^{- 4 k M})$ by \eqref{eq:z_star}. The associated resonant state equals
\begin{equation}
    \Phi^*(x) = \begin{cases} u_{\vec{\zeta^*}}(-M) e^{- i \sqrt{z^*}(x - M)} & x \leq -M \\ u_{\vec{\zeta^*}}(x) & |x| \leq M \\ u_{\vec{\zeta^*}}(M) e^{i \sqrt{z^*}(x - M)} & x \geq M, \end{cases}
\end{equation}
where $\vec{\zeta^*} = (w^*,z^*)$, and $w^*$ is given up to corrections of $O(e^{- 4 k M})$ by \eqref{eq:w_star}.
\end{corollary}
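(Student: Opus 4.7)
The plan is to assemble the statement directly from the preceding results, adding only a short verification that the displayed $\Phi^*$ is indeed the resonant state attached to $z^*$. First, for $M \geq M_0$ Theorem \ref{th:main_theorem_noparity} produces a unique root $\vec{\zeta}^* = (w^*, z^*) \in \Omega_M$ of $\vec{\Theta}$. The paragraph surrounding \eqref{eq:condition} shows that $z^* \in \field{C}$ with $\Re z^* > 0$ and $\Im z^* < 0$ is a resonance of $H_{\rm trunc}$ precisely when such a $\vec{\zeta}^*$ with $\vec{\Theta}(\vec{\zeta}^*) = 0$ exists; hence $z^*$ is a resonance. The asymptotic identity \eqref{eq:res_expansion}, whose componentwise expansion is carried out in Corollary \ref{cor:correction_noparity}, yields exactly \eqref{eq:z_star} for $z^*$ and \eqref{eq:w_star} for $w^*$, up to errors of $O(e^{-4kM})$.

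Next, I would verify that $\Phi^*$ as displayed satisfies $H_{\rm trunc} \Phi^* = z^* \Phi^*$ in the sense appropriate for a resonant state. On $[-M, M]$, $\Phi^*(x) = u_{\vec{\zeta}^*}(x)$ solves $(D_x^2 + V - z^*) u_{\vec{\zeta}^*} = 0$ by definition (see \eqref{eq:eq_nosymmetry_phi_non-zero} or \eqref{eq:eq_nosymmetry_phiprime_non-zero}), and since $V_{\rm trunc} \equiv V$ on this interval we have $H_{\rm trunc}\Phi^* = z^*\Phi^*$ there. On $|x| > M$ the potential $V_{\rm trunc}$ vanishes, so one only needs the exterior exponentials to solve $D_x^2 \Phi^* = z^* \Phi^*$, which is immediate from the chosen branch of $\sqrt{z^*}$. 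Continuity at $x = \pm M$ is built into the formula; the remaining $C^1$-matching reduces to
\begin{equation*}
u_{\vec{\zeta}^*}'(M) - i \sqrt{z^*}\, u_{\vec{\zeta}^*}(M) = 0, \qquad u_{\vec{\zeta}^*}'(-M) + i \sqrt{z^*}\, u_{\vec{\zeta}^*}(-M) = 0,
\end{equation*}
which are precisely $\Theta^{+}(\vec{\zeta}^*) = 0$ and $\Theta^{-}(\vec{\zeta}^*) = 0$, already established.

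Finally, because $\Im z^* < 0$ the branch convention gives $\Im \sqrt{z^*} < 0$, so the two exterior pieces grow exponentially at $\pm \infty$. This confirms that $\Phi^*$ is purely outgoing and is a genuine resonant state rather than an $L^2$ eigenfunction, consistent with $z^*$ being a true resonance rather than a bound-state eigenvalue of $H_{\rm trunc}$. No substantive obstacle arises beyond the bookkeeping above: all of the analytic content has already been proved in Theorem \ref{th:main_theorem_noparity} and Corollary \ref{cor:correction_noparity}, and the present corollary is simply a matter of repackaging and interpreting the root $\vec{\zeta}^*$ in terms of the physical resonance and its associated resonant state.
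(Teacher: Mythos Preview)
Your proposal is correct and matches the paper's treatment: the paper states this corollary without proof, regarding it as an immediate repackaging of Theorem \ref{th:main_theorem_noparity} and Corollary \ref{cor:correction_noparity}, and your argument supplies exactly the routine bookkeeping (solving the ODE on $[-M,M]$, free propagation outside, and $C^1$-matching via $\Theta^\pm(\vec{\zeta}^*)=0$) that underlies this. One minor remark: the displayed formula for $\Phi^*$ in the statement has an apparent typo in the $x\le -M$ branch (the exponent should be $(x+M)$ rather than $(x-M)$ for continuity at $x=-M$), so your claim that ``continuity at $x=\pm M$ is built into the formula'' holds for the intended corrected version.
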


\begin{remark} \label{rem:reduction_to_parity}
It is important to check that the results of the present section reduce to those of Section \ref{sec:main_result} when parity symmetry holds. First, as our results in Section \ref{sec:main_result} confirm, the resonant state whose resonance is nearby in the complex plane to $E$ must be even (resp. odd) whenever the bound state is even (resp. odd). Hence the correction to $w_0$ in \eqref{eq:w_star} should vanish when parity symmetry holds. Second, the correction term in \eqref{eq:z_star} should reduce to \eqref{eq:correction} when parity symmetry holds. We confirm these expectations in Section \ref{sec:reduction_when_parity_holds}.
\end{remark}

\begin{remark}
We remark on further generalizations of our result which should follow from an essentially identical analysis. First, recall that we assume the structure is truncated at $x = \pm M$ \eqref{eq:V_trunc}. Our result could be generalized to the case where the structure is truncated at some $M_+ > 0$ and $M_- < 0$ where $M_+ \neq - M_-$, in the limit where $M_\pm \rightarrow \pm \infty$. Second, our result should even generalize to the case of differing periodic structures either side of the defect region, even with differing periods. Since extending our results to these settings would complicate the statement of our results and would not require substantial modifications of the proof, we do not consider these cases here.
\end{remark}

\section{Proof of Theorem \ref{th:main_theorem_noparity}: derivation of $\Xi$ and proof $\mathcal{N}(\vec{\eta}) \neq 0$} \label{sec:gen_no_parity}

In this section we present the parts of the proof of Theorem \ref{th:main_theorem_noparity} which differ substantially from the proof of Theorem \ref{th:main_theorem}. Specifically, we show that the form of $\Xi$ \eqref{eq:Xi} makes the map $\Psi$ \eqref{eq:Psi_2} a contraction in the ball $\Omega_M$ \eqref{eq:omega_M_2}, and prove that $\mathcal{N}(\vec{\eta}) \neq 0$ so that $\Xi$ is well-defined.

We will first show that the form of $\Xi$ \eqref{eq:Xi} makes $\Psi$ a contraction in the ball $\Omega_M$. Writing $\Psi(\vec{\zeta}) = ( \Psi_1(\vec{\zeta}),\Psi_2(\vec{\zeta}) )^\top$, by Taylor's theorem we have that $\Psi$ is a contraction if every element of the Jacobian matrix 
\begin{equation}
    J := \begin{pmatrix} \de_w \Psi_1 & \de_z \Psi_1 \\ \de_w \Psi_2 & \de_z \Psi_2 \end{pmatrix}
\end{equation}
can be bounded uniformly by $\frac{1}{2}$ in $\Omega_M$. Recalling the form of $\Psi$ \eqref{eq:Psi_2}, writing $\Xi$ as
\begin{equation}
    \Xi = \begin{pmatrix} \Xi_{11} & \Xi_{12} \\ \Xi_{21} & \Xi_{22} \end{pmatrix},
\end{equation}
and assuming $\Xi$ is independent of $\vec{\zeta}$, we have that 
\begin{equation}
    J = \begin{pmatrix} J_{11} & J_{12} \\ J_{21} & J_{22} \end{pmatrix} = \begin{pmatrix} 1 - \Xi_{11} \de_w \Theta^+ - \Xi_{12} \de_w \Theta^- & - \Xi_{11} \de_z \Theta^+ - \Xi_{12} \de_z \Theta^- \\ - \Xi_{21} \de_w \Theta^+ - \Xi_{22} \de_w \Theta^- & 1 - \Xi_{21} \de_z \Theta^+ - \Xi_{22} \de_z \Theta^- \end{pmatrix}.
\end{equation}
Substituting the form of $\Xi$ given in \eqref{eq:Xi} (we will prove in Lemma \ref{lem:bound_N_below} that $\mathcal{N}(\vec{\eta}) \neq 0$ so that $\Xi$ is well-defined), the diagonal entries of $J$ are
\begin{equation}
\begin{split}
    &J_{11} = 1 - \frac{ \de_w \Theta^+(\vec{\zeta}) \de_z \Theta^-(\vec{\eta}) - \de_z \Theta^+(\vec{\eta}) \de_w \Theta^-(\vec{\zeta}) }{ \mathcal{N}(\vec{\eta}) }, \\
    &J_{22} = 1 - \frac{ \de_w \Theta^+(\vec{\eta}) \de_z \Theta^-(\vec{\zeta}) - \de_z \Theta^+(\vec{\zeta}) \de_w \Theta^-(\vec{\eta}) }{\mathcal{N} (\vec{\eta})},
\end{split}
\end{equation}
while the off-diagonal terms are
\begin{equation}
\begin{split}
    &J_{12} = - \frac{ \de_z \Theta^+(\vec{\zeta}) \de_z \Theta^-(\vec{\eta}) - \de_z \Theta^+(\vec{\eta}) \de_z \Theta^-(\vec{\zeta}) }{ \mathcal{N}(\vec{\eta}) } \\
    &J_{21} = - \frac{ \de_w \Theta^+(\vec{\zeta}) \de_w \Theta^-(\vec{\eta}) - \de_w \Theta^+(\vec{\eta}) \de_w \Theta^-(\vec{\zeta}) }{ \mathcal{N}(\vec{\eta}) }.
\end{split}
\end{equation}
To see that this choice of $\Xi$ makes the entries of $J$ small for $z \in \Omega_M$ and $M$ sufficiently large, note that we can re-write the diagonal entries of $J$ as
\begin{equation} \label{eq:diagonal}
\begin{split}
    &J_{11} = \frac{1}{\mathcal{N} (\vec{\eta})} \left[ \left( \de_w \Theta^+(\vec{\eta}) - \de_w \Theta^+(\vec{\zeta}) \right) \de_z \Theta^-(\vec{\eta}) - \de_z \Theta^+(\vec{\eta}) \left( \de_w \Theta^-(\vec{\eta}) - \de_w \Theta^-(\vec{\zeta}) \right) \right]    \\
    &J_{22} = \frac{1}{\mathcal{N} (\vec{\eta})} \left[ \de_w \Theta^+(\vec{\eta}) \left( \de_z \Theta^-(\vec{\eta}) - \de_z \Theta^-(\vec{\zeta}) \right) - \left( \de_z \Theta^+(\vec{\eta}) - \de_z \Theta^+(\vec{\zeta}) \right) \de_w \Theta^-(\vec{\eta}) \right],  \\
\end{split}
\end{equation}
and the off-diagonal terms as 
\begin{equation} \label{eq:offdiagonal}
\begin{split}
    &J_{12} = \frac{1}{\mathcal{N} (\vec{\eta})} \left[ \de_z \Theta^+(\vec{\eta}) \left( \de_z \Theta^-(\vec{\zeta}) - \de_z \Theta^-(\vec{\eta}) \right) \right. \\
    & \hspace{2cm} \left. + \left( \de_z \Theta^+(\vec{\eta}) - \de_z \Theta^+(\vec{\zeta}) \right) \de_z \Theta^-(\vec{\eta}) \right] ,  \\
    &J_{21} = \frac{1}{\mathcal{N} (\vec{\eta})} \left[ \de_w \Theta^+(\vec{\eta}) \left( \de_w \Theta^-(\vec{\zeta}) - \de_w \Theta^-(\vec{\eta}) \right)  \right. \\
    & \hspace{2cm} \left. - \left( \de_w \Theta^+(\vec{\zeta}) - \de_w \Theta^+(\vec{\eta}) \right) \de_w \Theta^-(\vec{\eta}) \right].
\end{split}
\end{equation}
We see that every component of $J$ is a sum of two terms. We will show that each component can be bounded by $\frac{1}{2}$ by showing that each of these terms can be bounded by $\frac{1}{4}$. The proofs follow a similar logic to that given in Section \ref{sec:fixed}. To give the idea, we first consider a representative term before making the necessary estimates precise. We aim to prove that, for example,
\begin{equation} \label{eq:est_to_prove}
    \left| \frac{1}{\mathcal{N} (\vec{\eta})} \left( \de_z \Theta^+(\vec{\eta}) - \de_z \Theta^+(\vec{\zeta}) \right) \de_w \Theta^-(\vec{\eta}) \right| < \frac{1}{4},
\end{equation}
or equivalently
\begin{equation}
   \left| \de_z \Theta^+(\vec{\eta}) - \de_z \Theta^+(\vec{\zeta}) \right| \left| \de_w \Theta^-(\vec{\eta}) \right| < \frac{1}{4} \left| \mathcal{N}(\vec{\eta}) \right|,
\end{equation}
for all $z \in \Omega_M$. By the mean value theorem, we have that
\begin{equation}
    \left| \de_z \Theta^+(\vec{\eta}) - \de_z \Theta^+(\vec{\zeta}) \right| \leq | \vec{\zeta} - \vec{\eta} | \sup_{z \in \Omega_M} | \de_z^2 \Theta^+(\vec{\zeta}) |
\end{equation}
for $\vec{\zeta} \in \Omega_M$. Since for $\vec{\zeta} \in \Omega_M$, $|\vec{\zeta} - \vec{\eta}|$ is exponentially small in $M$, estimate \eqref{eq:est_to_prove} is proved if we can show that
\begin{equation} \label{eq:necessary}
    \sup_{z \in \Omega_M} | \de_z^2 \Theta^+(\vec{\zeta}) | \leq C e^{k M}, \left| \de_w \Theta^-(\vec{\eta}) \right| \leq C e^{k M}, \text{ and } \left| \mathcal{N}(\vec{\eta}) \right| \geq C e^{2 k M}
\end{equation}
for constants $k > 0$ and $C > 0$. It is clear that with analogous bounds as in \eqref{eq:necessary}, the other terms appearing in \eqref{eq:diagonal}-\eqref{eq:offdiagonal} can be bounded along the same lines.

Now that the basic idea has been established, we make precise the lemmas which are necessary to prove that $\Psi$ is a contraction. First, we require a lemma analogous to Lemma \ref{lem:bound_de_z_thet}. 
\begin{lemma} \label{lem:bound_N_below}
Let $\Theta^\pm(\vec{\zeta})$ be as in \eqref{eq:thet_again}. There exist positive constants $C$ and $k$ and $M_0 > \rho > 0$ such that for all $M \geq M_0$,
\begin{equation} \label{eq:N_formula}
    \mathcal{N}(\vec{\eta}) := \de_w \Theta^+(\vec{\eta}) \de_z \Theta^-(\vec{\eta}) - \de_z \Theta^+(\vec{\eta}) \de_w \Theta^-(\vec{\eta})
\end{equation}
satisfies
\begin{equation} \label{eq:bound_bel_N}
    \left| \mathcal{N}(\vec{\eta}) \right| \geq C e^{2 k M}.
\end{equation}
\end{lemma}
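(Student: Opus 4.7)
The plan is to adapt Section \ref{sec:bound_de_z_thet}: I would compute each of the four partial derivatives appearing in \eqref{eq:N_formula} to leading order in terms of the fundamental solutions $u_{\vec{\eta}} = \Phi$ and $v_{\vec{\eta}}$, substitute into \eqref{eq:N_formula}, and bound the resulting expression below by $e^{2kM}$. Here $k > 0$ denotes the larger Floquet exponent, exactly as in Section \ref{sec:bound_de_z_thet}; $u_{\vec{\eta}}$ decays on both sides at rate $k$, while $v_{\vec{\eta}}$ (being linearly independent from $u_{\vec{\eta}}$ and sharing the same periodic coefficients outside $[-\rho,\rho]$) must have nonzero projection onto the growing Floquet mode on each side, and hence grows at rate $k$ as $x \to \pm\infty$. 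For the $w$-derivative, differentiating the initial data in \eqref{eq:eq_nosymmetry_phi_non-zero} or \eqref{eq:eq_nosymmetry_phiprime_non-zero} shows that $\de_w u_{\vec{\zeta}}|_{\vec{\zeta}=\vec{\eta}}$ solves the homogeneous ODE at energy $E$, and matching initial data expresses it as $a\, u_{\vec{\eta}} + v_{\vec{\eta}}$ for an explicit constant $a$ depending on $\Phi'(0)$ (equal to $\Phi'(0)/(1+\Phi'(0)^2)$ in the first case and to $0$ in the second); since $u_{\vec{\eta}}(\pm M), u_{\vec{\eta}}'(\pm M) = O(e^{-kM})$, this yields $\de_w \Theta^\pm(\vec{\eta}) = v_{\vec{\eta}}'(\pm M) \mp i\sqrt{E}\, v_{\vec{\eta}}(\pm M) + O(e^{-kM})$. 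For the $z$-derivative, variation of parameters as in Section \ref{sec:ests_2}, together with the facts that $u_{\vec{\eta}} v_{\vec{\eta}}$ is asymptotically periodic (so $\int_0^{\pm M} u_{\vec{\eta}} v_{\vec{\eta}} = O(M)$) and that $\int_0^M u_{\vec{\eta}}^2$ and $\int_{-M}^0 u_{\vec{\eta}}^2$ are positive and bounded above, gives
\begin{equation}
\begin{split}
    \de_z \Theta^+(\vec{\eta}) &= -\left(\inty{0}{M}{u_{\vec{\eta}}^2(y)}{y}\right)\bigl[\, v_{\vec{\eta}}'(M) - i\sqrt{E}\, v_{\vec{\eta}}(M)\, \bigr] + O(M e^{-kM}), \\
    \de_z \Theta^-(\vec{\eta}) &= +\left(\inty{-M}{0}{u_{\vec{\eta}}^2(y)}{y}\right)\bigl[\, v_{\vec{\eta}}'(-M) + i\sqrt{E}\, v_{\vec{\eta}}(-M)\, \bigr] + O(M e^{-kM}).
\end{split}
\end{equation}

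Setting $A := v_{\vec{\eta}}'(M) - i\sqrt{E}\, v_{\vec{\eta}}(M)$ and $B := v_{\vec{\eta}}'(-M) + i\sqrt{E}\, v_{\vec{\eta}}(-M)$, substituting the four formulae into \eqref{eq:N_formula} and using that $\int_0^M u_{\vec{\eta}}^2 + \int_{-M}^0 u_{\vec{\eta}}^2 = \int_{-M}^M u_{\vec{\eta}}^2$, the leading terms collapse to
\begin{equation}
    \mathcal{N}(\vec{\eta}) = \left(\inty{-M}{M}{u_{\vec{\eta}}^2(y)}{y}\right) A\, B + O(M),
\end{equation}
where the $O(M)$ absorbs cross-terms of size $O(M e^{-kM}) \cdot O(e^{kM})$. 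The integral is bounded below by $\inty{-\rho}{\rho}{\Phi^2(y)}{y} > 0$ since $\Phi$ is nontrivial. For the factors $|A|$ and $|B|$, I would reuse the Wronskian argument of equations \eqref{eq:exp_behavior}--\eqref{eq:v_bd_bel}: since $u_{\vec{\eta}}$ and $v_{\vec{\eta}}$ are real, $|A|^2 = (v_{\vec{\eta}}'(M))^2 + E\, v_{\vec{\eta}}^2(M)$, and Cauchy--Schwarz applied to the Wronskian identity $u_{\vec{\eta}}(\pm M) v_{\vec{\eta}}'(\pm M) - u_{\vec{\eta}}'(\pm M) v_{\vec{\eta}}(\pm M) = 1$ combined with $u_{\vec{\eta}}^2(\pm M) + (u_{\vec{\eta}}'(\pm M))^2 \leq C e^{-2kM}$ gives $|A|, |B| \geq C e^{kM}$. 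Hence $|\mathcal{N}(\vec{\eta})| \geq C e^{2kM}$ for $M$ sufficiently large, with the $O(M)$ correction easily dominated.

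The main obstacle will be the bookkeeping: verifying that every cross-term in the expansion of $\mathcal{N}(\vec{\eta})$ really is only $O(M)$ rather than $O(e^{2kM})$ with a small but non-negligible coefficient that could spoil the leading-order balance, and checking that the two cases $\Phi(0) \neq 0$ and $\Phi(0) = 0$ yield the same leading formula. The latter point is essentially automatic because the case-dependent constant $a$ in $\de_w u_{\vec{\zeta}}|_{\vec{\zeta}=\vec{\eta}} = a u_{\vec{\eta}} + v_{\vec{\eta}}$ enters only through $a u_{\vec{\eta}}(\pm M) = O(e^{-kM})$ and is invisible at leading order.
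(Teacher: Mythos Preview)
Your proposal is correct and follows essentially the same route as the paper's own proof. Both arguments extract the quadratic-in-$v_{\vec\eta}$ leading part of $\mathcal N(\vec\eta)$, obtain the factorization $\bigl(\int_{-M}^M u_{\vec\eta}^2\bigr)\,A\,B$ with $A = v_{\vec\eta}'(M)-i\sqrt{E}\,v_{\vec\eta}(M)$ and $B = v_{\vec\eta}'(-M)+i\sqrt{E}\,v_{\vec\eta}(-M)$, and then bound each factor below via the Wronskian/Cauchy--Schwarz argument of \eqref{eq:exp_behavior}--\eqref{eq:v_bd_bel}; your explicit $O(M)$ tracking of the cross-terms is exactly the ``ignore the non-quadratic terms'' step that the paper states more tersely.
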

Second, we require a lemma analogous to Lemma \ref{lem:bound_de_z_2_thet}.
\begin{lemma} \label{lem:bound_above}
Let $k$ be as in Lemma \ref{lem:bound_N_below} and $\Omega_M$ be as in \eqref{eq:omega_M_2}. Then there exist positive constants $C$ and $M_0 > \rho > 0$ such that for all $M \geq M_0$ the following estimates hold
\begin{equation} \label{eq:de_z_bd}
    | \de_z \Theta^\pm(\vec{\eta}) | \leq C e^{k M}, | \de_w \Theta^\pm(\vec{\eta}) | \leq C e^{k M}, 
\end{equation}
\begin{equation} \label{eq:de_2_z_bd}
    \sup_{\vec{\zeta} \in \Omega_M} | \de_z^2 \Theta^\pm(\vec{\zeta}) | \leq C e^{k M}, \sup_{\vec{\zeta} \in \Omega_M} | \de_w^2 \Theta^\pm(\vec{\zeta}) | \leq C e^{k M}.
\end{equation}
\end{lemma}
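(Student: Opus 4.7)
The plan is to reduce each of the four estimates in \eqref{eq:de_z_bd}--\eqref{eq:de_2_z_bd} to bounds on $u_{\vec{\zeta}}$ and its $w$- and $z$-derivatives at $x = \pm M$, and then to control these via a direct extension of the Picard-iteration scheme of Sections \ref{sec:u_zee} and \ref{sec:dz_u_zee}, run separately on the two intervals $[0, M]$ and $[-M, 0]$. Since $\Theta^\pm(\vec{\zeta}) = u'_{\vec{\zeta}}(\pm M) \mp i\sqrt{z}\, u_{\vec{\zeta}}(\pm M)$, differentiating and applying the triangle inequality exactly as in \eqref{eq:to_bd} reduces everything to bounds of the form $|\de_z^n \vec u_{\vec{\zeta}}(\pm M)|, |\de_w^n \vec u_{\vec{\zeta}}(\pm M)| \leq C e^{kM}$ for $n = 0, 1, 2$, together with the harmless uniform bound on $|z|^{\pm 1/2}$ over $\Omega_M$ (available because $E > 0$ and $\Omega_M$ shrinks to $\vec\eta$ as $M \to \infty$).

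A crucial simplification comes from the $w$-direction. In both cases \eqref{eq:eq_nosymmetry_phi_non-zero} and \eqref{eq:eq_nosymmetry_phiprime_non-zero}, the initial data $(u_{\vec{\zeta}}(0), u'_{\vec{\zeta}}(0))$ depend \emph{linearly} on $w$, while the ODE itself is $w$-independent. Hence $u_{\vec{\zeta}}(x)$ is an affine function of $w$ for fixed $(z, x)$, which yields two consequences at once. First, $\de_w^2 u_{\vec{\zeta}} \equiv 0$, so $\de_w^2 \Theta^\pm \equiv 0$ and the second inequality in \eqref{eq:de_2_z_bd} holds trivially. Second, $\de_w u_{\vec{\zeta}}(x)$ is itself a solution of the homogeneous ODE with $\vec{\zeta}$-independent initial data, so at $\vec{\zeta} = \vec{\eta}$ one has $\de_w \vec u_{\vec{\eta}}(\pm M) = U_E(\pm M)\, \vec c$ for a fixed vector $\vec c$. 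The Floquet estimate $\|U_E(x)\| \leq C^*(e^{k|x|} + 1)$ from Section \ref{sec:u_zee} (with the obvious mirror-image version on $[-M, 0]$, since $V$ is periodic outside $[-\rho, \rho]$) then gives $|\de_w \Theta^\pm(\vec{\eta})| \leq C e^{kM}$.

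For the $z$-derivatives I would import the Picard machinery of Sections \ref{sec:u_zee}--\ref{sec:dz_u_zee} essentially verbatim, splitting $H(x, z) = H(x, E) + \tilde H(z)$ as in \eqref{eq:H_H_tilde} and iterating on each of the intervals $[0, M]$ and $[-M, 0]$. Since $\vec{\zeta} \in \Omega_M$ implies $|z - E| \leq M^{-2} e^{-kM}$, the contraction criterion $M|z - E| C^*(e^{kM} + 1) < 1/2$ is satisfied uniformly for $M$ large, and the geometric series of Picard corrections converges and produces $|\de_z^n \vec u_{\vec{\zeta}}(\pm M)| \leq C e^{kM}$ (up to polynomial-in-$M$ prefactors, absorbable into the constant exactly as in the passage from Lemma \ref{lem:bds} to Lemma \ref{lem:bound_de_z_2_thet}) for $n = 0, 1, 2$ and all $\vec{\zeta} \in \Omega_M$. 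Substituting these into the differentiated formulas for $\Theta^\pm$ and $\de_z \Theta^\pm$ completes the remaining bounds in \eqref{eq:de_z_bd}--\eqref{eq:de_2_z_bd}.

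The main obstacle is organizational rather than analytic: one must keep track of the two sides $\pm M$ and of the combined $(w, z)$-dependence, while ensuring the estimates are uniform on the whole ball $\Omega_M$ rather than pointwise at $\vec{\eta}$. No new tool is required beyond the Floquet--Picard apparatus already developed in Sections \ref{sec:bound_de_z_thet}--\ref{sec:bound_de_z_2_thet}, together with the linearity-in-$w$ observation, which sidesteps the $\de_w^2$ estimate entirely.
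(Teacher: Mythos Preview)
Your proposal is correct and follows essentially the same route as the paper: the first-derivative bounds at $\vec{\eta}$ come from writing $\de_w u_{\vec{\zeta}}$ and $\de_z u_{\vec{\zeta}}$ explicitly in terms of the Floquet basis $u_{\vec{\eta}}, v_{\vec{\eta}}$ (the paper uses the variation-of-parameters formula \eqref{eq:de_z_u} and the identity \eqref{eq:de_w_u}; you use the equivalent $U_E(\pm M)\vec c$ form), while the uniform second-derivative bounds on $\Omega_M$ are deferred to the Picard scheme of Section~\ref{sec:bound_de_z_2_thet}. Your explicit observation that $u_{\vec{\zeta}}$ is affine in $w$, so that $\de_w^2\Theta^\pm\equiv 0$, is a clean shortcut the paper leaves implicit under the blanket phrase ``the same argument as in Section~\ref{sec:bound_de_z_2_thet}''.
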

We explain the important points of these proofs, particularly those points which differ from the proofs of Lemmas \ref{lem:bound_de_z_thet} and \ref{lem:bound_de_z_2_thet}, in the following section.

\subsection{Proofs of Lemmas \ref{lem:bound_N_below} and \ref{lem:bound_above}} \label{sec:proofs_of_lems}

Using the definitions of $\Theta^\pm(\vec{\eta})$ \eqref{eq:X_again}-\eqref{eq:thet_again} we have that
\begin{equation} \label{eq:all_de_thet}
\begin{split}
    \de_z \Theta^\pm(\vec{\zeta}) &= \left. \de_z u_{\vec{\zeta}}'(\pm M) \right|_{\vec{\zeta}=\vec{\eta}}  \mp i \sqrt{E} \left. \de_z u_{\vec{\zeta}}(\pm M) \right|_{\vec{\zeta}=\vec{\eta}}  \mp i \frac{1}{2 \sqrt{E}} \left. u_{\vec{\zeta}}(\pm M) \right|_{\vec{\zeta}=\vec{\eta}}  \\
    \de_w \Theta^\pm(\vec{\zeta}) &= \left. \de_w u_{\vec{\zeta}}'(\pm M) \right|_{\vec{\zeta}=\vec{\eta}} \mp i \sqrt{E} \left. \de_w u_{\vec{\zeta}}(\pm M) \right|_{\vec{\zeta}=\vec{\eta}}.
\end{split}
\end{equation}
Suppose that $\Phi(0) \neq 0$ so that $u_{\vec{\zeta}}$ and $v_{\vec{\zeta}}$ are defined by \eqref{eq:eq_nosymmetry_phi_non-zero}. Differentiating the equation for $u_{\vec{\zeta}}$ we have that $\de_w u_{\vec{\zeta}}(x)$ satisfies
\begin{equation}
    \left( D_x^2 + V(x) - z \right) \de_w u_{\vec{\zeta}} = 0, \quad \de_w u_{\vec{\zeta}}(0) = 0, \de_w u_{\vec{\zeta}}'(0) = 1,
\end{equation}
which has the unique solution
\begin{equation} \label{eq:de_w_u}
    \de_w u_{\vec{\zeta}}(x) = \frac{w}{1 + w^2} u_{\vec{\zeta}}(x) + v_{\vec{\zeta}}(x).
\end{equation}
If instead $\Phi(0) = 0$, so that $u_{\vec{\zeta}}$ and $v_{\vec{\zeta}}$ are defined by \eqref{eq:eq_nosymmetry_phiprime_non-zero}, then we have that $\de_w u_{\vec{\zeta}}(x)$ satisfies
\begin{equation}
    \left( D_x^2 + V(x) - z \right) \de_w u_{\vec{\zeta}} = 0, \quad \de_w u_{\vec{\zeta}}(0) = 1, \de_w u_{\vec{\zeta}}'(0) = 0,
\end{equation}
which has the unique solution
\begin{equation} \label{eq:de_w_u_2}
    \de_w u_{\vec{\zeta}}(x) = \frac{ w }{1 + w^2} u_{\vec{\zeta}}(x) + v_{\vec{\zeta}}(x).
\end{equation}
\begin{remark} \label{rem:convention}
Note that by virtue of the convention we chose for $u_{\vec{\zeta}}$ and $v_{\vec{\zeta}}$ in \eqref{eq:eq_nosymmetry_phiprime_non-zero}, the formulas for $\de_w u_{\vec{\zeta}}$ in \eqref{eq:de_w_u} and \eqref{eq:de_w_u_2} match. This allows us to write formulas which are valid in both cases without modification.
\end{remark}
By variation of parameters as in Section \ref{sec:ests_2} we have that 
\begin{equation} \label{eq:de_z_u}
    \de_z u_{\vec{\zeta}}(x) = \left( \inty{0}{x}{ v_{\vec{\zeta}}(y) u_{\vec{\zeta}}(y) }{y} \right) u_{\vec{\zeta}}(x) - \left( \inty{0}{x}{ u_{\vec{\zeta}}^2(y) }{y} \right) v_{\vec{\zeta}}(x).
\end{equation}
We now argue as in Section \ref{sec:bound_de_z_thet}. Setting $\vec{\zeta} = \vec{\eta}$, by Floquet theory and the assumption that $u_{\vec{\eta}}(x) = \Phi(x)$ for $|x| \leq M$, we have that $u_{\vec{\eta}}(x)$ exponentially decays in $|x|$, i.e. that there exist constants $C > 0$ and $k > 0$ such that
\begin{equation} \label{eq:exp_dec}
    | u_{\vec{\eta}}(x) | \leq C e^{- k |x|}, \quad | u'_{\vec{\eta}}(x) | \leq C e^{- k |x|},
\end{equation}
and
\begin{equation} \label{eq:exp_gro}
    | v_{\vec{\eta}}(x) | \leq C e^{k |x|}, \quad | v'_{\vec{\eta}}(x) | \leq C e^{k |x|}.
\end{equation}
We can now give the proof of Lemma \ref{lem:bound_above}.
\begin{proof}[Proof of Lemma \ref{lem:bound_above}]
The estimates \eqref{eq:de_z_bd} follow immediately from substituting formulas \eqref{eq:de_w_u}, \eqref{eq:de_w_u_2}, and \eqref{eq:de_z_u} into \eqref{eq:all_de_thet} and applying the estimates \eqref{eq:exp_dec}-\eqref{eq:exp_gro}. The estimates \eqref{eq:de_2_z_bd} follow by the same argument as in Section \ref{sec:bound_de_z_2_thet}.  
\end{proof}
It remains to prove Lemma \ref{lem:bound_N_below}. 
\begin{proof}[Proof of Lemma \ref{lem:bound_N_below}]
Directly setting $\vec{\zeta} = \vec{\eta}$ and substituting \eqref{eq:de_w_u}, \eqref{eq:de_w_u_2}, and \eqref{eq:de_z_u} into \eqref{eq:N_formula} yields a long expression for $\mathcal{N}(\vec{\eta})$. To prove the bound below \eqref{eq:bound_bel_N}, we note that the terms in this expression which depend quadratically on $v_{\vec{\eta}}(x)$ will dominate for sufficiently large $M$ and hence we can ignore the other terms. 

Regardless of whether $\Phi(0) \neq 0$ or $\Phi(0) = 0$, the terms which depend quadratically on $v_{\vec{\eta}}$ are
\begin{equation}
\begin{split}
    &\left( v_{\vec{\eta}}'(M) - i \sqrt{E} v_{\vec{\eta}}(M) \right) \\
    & \times \left( \left( - \inty{0}{-M}{ u_{\vec{\eta}}^2(y) }{y} \right) v_{\vec{\eta}}'(-M) + i \sqrt{E} \left( - \inty{0}{-M}{ u_{\vec{\eta}}^2(y) }{y} \right) v_{\vec{\eta}}(-M) \right)    \\
    &- \left( \left( - \inty{0}{M}{ u_{\vec{\eta}}^2(y) }{y} \right) v_{\vec{\eta}}'(M) - i \sqrt{E} \left( - \inty{0}{M}{ u_{\vec{\eta}}^2(y) }{y} \right) v_{\vec{\eta}}(M) \right)  \\
    & \hspace{2cm} \times \left( v_{\vec{\eta}}'(-M) + i \sqrt{E} v_{\vec{\eta}}(-M) \right),
\end{split}
\end{equation}
which simplifies to
\begin{equation}
\begin{split}
    &\inty{-M}{M}{ u_{\vec{\eta}}^2(y) }{y} \left( v_{\vec{\eta}}'(M) v_{\vec{\eta}}'(-M) - i \sqrt{E} v_{\vec{\eta}}(M) v_{\vec{\eta}}'(-M) \right. \\
    & \hspace{2cm}  \left. + i \sqrt{E} v_{\vec{\eta}}'(M) v_{\vec{\eta}}(-M) + E v_{\vec{\eta}}(M) v_{\vec{\eta}}(-M) \right)    \\
    &= \inty{-M}{M}{ u_{\vec{\eta}}^2(y) }{y} \left( v_{\vec{\eta}}'(M) - i \sqrt{E} v_{\vec{\eta}}(M) \right) \left( v_{\vec{\eta}}'(-M) + i \sqrt{E} v_{\vec{\eta}}(-M) \right).
\end{split}
\end{equation}
The estimate \eqref{eq:bound_bel_N} now follows since $\inty{-M}{M}{ u_{\vec{\eta}}^2(y) }{y}$ can be bounded below uniformly in $M$, and the two factors depending on $v_{\vec{\eta}}$ can be bounded below by $C e^{k M}$ exactly as in the proof of Lemma \ref{lem:for_bd_bel} (note that both factors $\neq 0$ since $v_{\vec{\eta}}(x)$ is real, and see in particular equations \eqref{eq:exp_behavior}-\eqref{eq:v_bd_bel}).
\end{proof}

\section{Proof of Corollary \ref{cor:correction_noparity}} \label{sec:proof_correction_noparity}

The correction term to be simplified is as follows
\begin{equation} \label{eq:expression}
    \Xi \Theta(\vec{\eta}) = \frac{1}{\mathcal{N} (\vec{\eta})} \begin{pmatrix} \de_z \Theta^-(\vec{\eta}) \Theta^+(\vec{\eta}) - \de_z \Theta^+(\vec{\eta}) \Theta^-(\vec{\eta}) \\ \de_w \Theta^+(\vec{\eta}) \Theta^-(\vec{\eta}) - \de_w \Theta^-(\vec{\eta}) \Theta^+(\vec{\eta}) \end{pmatrix},
\end{equation}
where $\mathcal{N}(\vec{\eta}) = \de_w \Theta^+(\vec{\eta}) \de_z \Theta^-(\vec{\eta}) - \de_z \Theta^+(\vec{\eta}) \de_w \Theta^-(\vec{\eta})$. Whether $\Phi(0) \neq 0$ or $\Phi(0) = 0$, we substitute expressions \eqref{eq:all_de_thet}, \eqref{eq:de_w_u}, and \eqref{eq:de_z_u} into \eqref{eq:expression}. Just as in the proof of Lemma \ref{lem:bound_N_below} in Section \ref{sec:proofs_of_lems}, it suffices to consider the dominant terms i.e. those depending linearly or quadratically on $v_{\vec{\eta}}$. We have already seen from the proof of Lemma \ref{lem:bound_N_below} in Section \ref{sec:proofs_of_lems} that 
\begin{equation}
    \mathcal{N}(\vec{\eta}) = \inty{-M}{M}{ u_{\vec{\eta}}^2(y) }{y} \left( v_{\vec{\eta}}'(M) - i \sqrt{E} v_{\vec{\eta}}(M) \right) \left( v_{\vec{\eta}}'(-M) + i \sqrt{E} v_{\vec{\eta}}(-M) \right) + O(1).
\end{equation}
Similarly, ignoring terms which are independent of $v_{\vec{\eta}}$ gives
\begin{equation}
\begin{split}
    &\de_z \Theta^-(\vec{\eta}) \Theta^+(\vec{\eta}) - \de_z \Theta^+(\vec{\eta}) \Theta^-(\vec{\eta}) \\
    &= \inty{-M}{0}{ u_{\vec{\eta}}^2(y) }{y} \left( v'_{\vec{\eta}}(-M) + i \sqrt{E} v_{\vec{\eta}}(-M) \right) \left( u_{\vec{\eta}}'(M) - i \sqrt{E} u_{\vec{\eta}}(M) \right)    \\
    &\phantom{=} + \inty{0}{M}{ u_{\vec{\eta}}^2(y) }{y} \left( v'_{\vec{\eta}}(M) - i \sqrt{E} v_{\vec{\eta}}(M) \right) \left( u_{\vec{\eta}}'(-M) + i \sqrt{E} u_{\vec{\eta}}(-M) \right) + O(e^{- 2 k M}),
\end{split}
\end{equation}
and
\begin{equation}
\begin{split}
    &\de_w \Theta^+(\vec{\eta}) \Theta^-(\vec{\eta}) - \de_w \Theta^-(\vec{\eta}) \Theta^+(\vec{\eta})   \\
    &= \left( v_{\vec{\eta}}'(M) - i \sqrt{E} v_{\vec{\eta}}(M) \right) \left( u_{\vec{\eta}}'(-M) + i \sqrt{E} u_{\vec{\eta}}(-M) \right) \\
    &\phantom{=} - \left( v_{\vec{\eta}}'(-M) + i \sqrt{E} v_{\vec{\eta}}(-M) \right) \left( u_{\vec{\eta}}'(M) - i \sqrt{E} u_{\vec{\eta}}(M) \right) + O(e^{- 2 k M}).
\end{split}
\end{equation}
Putting everything together and writing $\Xi \Theta(\vec{\eta}) = ( (\Xi \Theta(\vec{\eta}) )_1, (\Xi \Theta(\vec{\eta}) )_2 )^\top$ we have 

\begin{equation} \label{eq:Xi_1}
\begin{split}
    \left( \Xi \Theta(\vec{\eta}) \right)_1 &= \left[ \inty{-M}{0}{ u_{\vec{\eta}}^2(y) }{y} \left( v'_{\vec{\eta}}(-M) + i \sqrt{E} v_{\vec{\eta}}(-M) \right) \left( u_{\vec{\eta}}'(M) - i \sqrt{E} u_{\vec{\eta}}(M) \right) \right.   \\
    &\left. + \inty{0}{M}{ u_{\vec{\eta}}^2(y) }{y} \left( v'_{\vec{\eta}}(M) - i \sqrt{E} v_{\vec{\eta}}(M) \right) \left( u_{\vec{\eta}}'(-M)  + i \sqrt{E} u_{\vec{\eta}}(-M) \right) \right]  \\
    & \times \left[ \inty{-M}{M}{ u_{\vec{\eta}}^2(y) }{y} \left( v_{\vec{\eta}}'(M) - i \sqrt{E} v_{\vec{\eta}}(M) \right) \left( v_{\vec{\eta}}'(-M) + i \sqrt{E} v_{\vec{\eta}}(-M) \right) \right]^{-1} \\
    &\phantom{=++} + O(e^{- 4 k M}),
\end{split}
\end{equation}
and 
\begin{equation} \label{eq:Xi_2}
\begin{split}
    \left( \Xi \Theta(\vec{\eta}) \right)_2 &= \left[ \left( v_{\vec{\eta}}'(M) - i \sqrt{E} v_{\vec{\eta}}(M) \right) \left( u_{\vec{\eta}}'(-M) + i \sqrt{E} u_{\vec{\eta}}(-M) \right) \right.     \\
    & \hspace{.75cm} \left. - \left( v_{\vec{\eta}}'(-M) + i \sqrt{E} v_{\vec{\eta}}(-M) \right) \left( u_{\vec{\eta}}'(M) - i \sqrt{E} u_{\vec{\eta}}(M) \right) \right] \\
    & \times \left[ \inty{-M}{M}{ u_{\vec{\eta}}^2(y) }{y} \left( v_{\vec{\eta}}'(M) - i \sqrt{E} v_{\vec{\eta}}(M) \right) \left( v_{\vec{\eta}}'(-M) + i \sqrt{E} v_{\vec{\eta}}(-M) \right) \right]^{-1} \\
    &\phantom{=++} + O(e^{- 4 k M}).
\end{split}
\end{equation}

\subsection{Verification that result of Corollary \ref{cor:correction_noparity} reduces to that of Corollary \ref{cor:imag_asymp} when parity symmetry holds} \label{sec:reduction_when_parity_holds}

Suppose that parity symmetry, i.e. Assumption \ref{as:parity}, holds. When the bound state $\Phi$ is even (resp. odd), we can assume that $u_{\vec{\eta}}$ and $v_{\vec{\eta}}$ are even (resp. odd) and odd (resp. even). Since the other case is identical, we consider only the case where $u_{\vec{\eta}}$ is even and $v_{\vec{\eta}}$ is odd. We have then that for all $x \in [0,M]$,
\begin{equation}
\begin{aligned}
    &u_{\vec{\eta}}(-x) = u_{\vec{\eta}}(x), & &v_{\vec{\eta}}(-x) = - v_{\vec{\eta}}(x),    \\
    &u_{\vec{\eta}}'(-x) = - u_{\vec{\eta}}'(x), & &v_{\vec{\eta}}'(-x) = v_{\vec{\eta}}'(x).
\end{aligned}
\end{equation}
Substituting these identities into \eqref{eq:Xi_1} and \eqref{eq:Xi_2} and repeating the simplifications of Section \ref{sec:Theta} we have that
\begin{equation}
    \Xi \Theta(\vec{\eta}) = \begin{pmatrix} 0 \\ \frac{ i \sqrt{E} - \left[ u_E'(M) v_E'(M) + E u_E(M) v_E(M) \right] }{ \left[ ( v_E'(M) )^2 + E ( v_E(M) )^2 \right] \inty{0}{M}{ u_E^2 (y) }{y} } \end{pmatrix} + O(M e^{- 4 k M}),
\end{equation}
as desired.

\section{Further generalizations} \label{sec:generalizations} 

In this section we describe two further generalizations of our main result. In Section \ref{sec:trunc_bdstates} we study the case where a periodic structure hosting a defect state with $E < 0$ is truncated far from the defect, creating a new bound state whose associated eigenvalue is exponentially close to $E$. In Section \ref{sec:gen_edge} we study the bound state created when a semi-infinite periodic structure hosting an edge state is truncated far from its edge. In remarks \ref{rem:half-line} and \ref{rem:quasi-1d} we discuss potential further generalizations to half-line operators and quasi-one-dimensional operators, respectively.

\subsection{One-dimensional defect states with negative energy $E < 0$ perturb to bound states of the truncated structure} \label{sec:trunc_bdstates}

We have so far assumed that the defect state eigenvalue is positive $E > 0$ throughout. In this section we again consider \eqref{eq:Schro} in the simplest case , i.e. under Assumptions \ref{as:V_assump} and \ref{as:parity}, but assume $H$ has a bound state as in \eqref{eq:bound} but with negative energy $E < 0$ (the generalization when parity symmetry does not hold is straightforward along the same lines as when $E > 0$). Our aim is now to prove that when the structure modeled by $H$ is truncated sufficiently far from $x = 0$, the resulting structure supports a bound state. More specifically, we will prove that the operator $H_{\rm trunc}$ defined by \eqref{eq:H_trunc}-\eqref{eq:V_trunc} has a bound state with energy $E^* < 0$ nearby to $E$ for $M$ sufficiently large (in particular, such that $M > \rho$, where $\rho$ is the support of the defect region). 

Let $u_z(x)$ and $X_1(z), X_2(z)$ be as in \eqref{eq:eq} and \eqref{eq:X}, and then define
\begin{equation} \label{eq:thet_bd}
    \Theta(z) := X_2(z) - i \sqrt{z} X_1(z).
\end{equation}
In contrast to \eqref{eq:theta}, we now assume $z \in \field{C} \setminus [0,\infty)$ and choose the branch of the square root such that $\Im \sqrt{z} > 0$. With these definitions, $E^* < 0$ is a bound state whenever $\Theta(E^*) = 0$.

The counterpart of Theorem \ref{th:main_theorem} in this context is as follows. Note that the theorem requires an additional assumption \eqref{eq:extra}.
\begin{theorem} \label{th:main_theorem_bound}
Let $V(x)$ satisfy Assumptions \ref{as:V_assump} and \ref{as:parity}, let $\Phi$ be a bound state with eigenvalue $E < 0$ as in \eqref{eq:bound}, and let $v_z(x)$ be as in \eqref{eq:eq_v}. Then there exists a $k > 0$ and an $M_0 > \rho \geq 0$ such that for all $M \geq M_0$ such that
\begin{equation} \label{eq:extra}
    v_E'(M) - i \sqrt{E} v_E(M) \neq 0,
\end{equation}
then
\begin{enumerate}
\item $\Theta(z)$ given by \eqref{eq:theta} has a unique root $E^*$ in the ball
\begin{equation}
    \Omega_M = \left\{ z : |z - E| \leq \frac{1}{M^2} e^{- k M} \right\}.
\end{equation}
\item The root $E^*$ is real, and can be precisely characterized as 
\begin{equation} \label{eq:asymp_real}
    E^* = E - \frac{ \Theta(E) }{ \de_z \Theta(E) } + O(e^{- 4 k M}).
\end{equation}
\end{enumerate}
\end{theorem}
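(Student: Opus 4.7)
The plan is to replicate the fixed-point architecture of Section \ref{sec:fixed} applied to the map
\begin{equation*}
    \Psi(z) := z - \frac{\Theta(z)}{\de_z \Theta(E)}
\end{equation*}
on the ball $\Omega_M = \{z : |z - E| \leq M^{-2} e^{-kM}\}$, where $\Theta$ is now defined by \eqref{eq:thet_bd} with the branch $\Im \sqrt{z} > 0$ on $\mathbb{C} \setminus [0, \infty)$. Once the analogues of Lemmas \ref{lem:bound_de_z_thet} and \ref{lem:bound_de_z_2_thet} are established in this regime, the existence and uniqueness of a root $E^* \in \Omega_M$ will follow from the Banach fixed-point theorem and the asymptotic expansion \eqref{eq:asymp_real} from the telescoping argument of Section \ref{sec:asymp} without modification. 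The two genuinely new points are: showing that $E^*$ is real (so that it corresponds to a true bound state, not a resonance), and recovering the lower bound $|\de_z \Theta(E)| \geq C e^{kM}$ in a setting where the indefiniteness of the relevant quadratic form obstructs the Wronskian trick used for $E > 0$.

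Reality of $E^*$ will follow from a symmetry observation. For real $z < 0$, the chosen branch gives $\sqrt{z} = i \sqrt{|z|}$, so $-i \sqrt{z} = \sqrt{|z|} \in \mathbb{R}$, and since \eqref{eq:eq} has real coefficients and real initial data at such $z$, both $u_z(M)$ and $u_z'(M)$ are real. Hence $\Theta(z) = u_z'(M) + \sqrt{|z|} u_z(M)$ is real for $z < 0$, and differentiating shows $\de_z \Theta(E) \in \mathbb{R}$ as well. Therefore $\Psi$ maps the real line near $E$ to itself. Starting the Banach iteration at the real point $z^{(0)} = E$ produces iterates that all remain real and inside $\Omega_M$, so the limit $E^* = \lim_{n \to \infty} \Psi^n(E)$ is real. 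The external ansatz $\Phi^*(x) = u_{E^*}(M) e^{i \sqrt{E^*}(|x| - M)}$ for $|x| > M$ then decays exponentially because $\Im \sqrt{E^*} > 0$, yielding a genuine $L^2$ eigenfunction of $H_{\rm trunc}$.

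For the quantitative bounds, $|\Theta(E)| \leq C e^{-kM}$ follows from the Floquet decay of $u_E$ as in Lemma \ref{lem:bound_uz}, and the upper bound $\sup_{z \in \Omega_M} |\de_z^2 \Theta(z)| \leq C e^{kM}$ is obtained from the Picard-iteration argument of Section \ref{sec:bound_de_z_2_thet}, which is insensitive to the sign of $E$. The delicate estimate is the lower bound on $|\de_z \Theta(E)|$. Repeating the variation-of-parameters computation from Section \ref{sec:ests_2} verbatim gives
\begin{equation*}
    \de_z \Theta(E) = - \left( \inty{0}{M}{u_E^2(y)}{y} \right) \bigl( v_E'(M) - i \sqrt{E} v_E(M) \bigr) + O(M e^{-kM}),
\end{equation*}
where the error absorbs the $- \tfrac{i}{2\sqrt{E}} u_E(M)$ piece together with the exponentially small contributions of $u_E(M)$ and $u_E'(M)$. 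The prefactor is bounded below by $\int_0^\rho u_E^2 > 0$ uniformly in $M$, so the whole estimate reduces to bounding $|v_E'(M) - i \sqrt{E} v_E(M)|$ from below by $C e^{kM}$. Writing $v_E(x) = e^{k(x - \rho)} q(x)$ with $q$ smooth and $1$-periodic (as in \eqref{eq:sols_2}),
\begin{equation*}
    v_E'(M) - i \sqrt{E} v_E(M) = e^{k(M - \rho)} \bigl[ (k - i \sqrt{E}) q(M) + q'(M) \bigr],
\end{equation*}
so assumption \eqref{eq:extra} is precisely the nondegeneracy of the periodic bracket at the chosen $M$ and delivers the required exponential lower bound.

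The main obstacle is precisely this lower bound. In the $E > 0$ case the identity $|v_E'(M) - i \sqrt{E} v_E(M)|^2 = (v_E'(M))^2 + E (v_E(M))^2$ combined with the Wronskian $u_E v_E' - u_E' v_E = 1$ produced the estimate for free (see \eqref{eq:v_bd_bel}), but when $E < 0$ the quadratic form becomes indefinite and can vanish for exceptional truncation lengths $M$. Assumption \eqref{eq:extra} is exactly what excludes this degenerate situation; with it in force the three estimates feed into the contraction criterion \eqref{eq:for_fixed} and the map-into-itself computation at the end of Section \ref{sec:fixed} without alteration, and the telescoping bound \eqref{eq:tele}--\eqref{eq:almost_2} yields the $O(e^{-4kM})$ remainder in \eqref{eq:asymp_real}. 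Combined with the reality argument, this completes both assertions of the theorem.
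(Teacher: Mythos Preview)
Your proposal is correct and follows essentially the same approach as the paper. The paper omits the proof, noting only that it parallels Theorem~\ref{th:main_theorem} with two modifications: condition~\eqref{eq:extra} replaces the automatic non-vanishing that held when $E>0$ (since then $i\sqrt{E}$ was purely imaginary while $v_E$ was real), and reality of $E^*$ follows because $\Psi$ maps $\field{R}\to\field{R}$ for $z<0$; you have identified and justified both of these points, and your explanation of why the Wronskian trick \eqref{eq:v_bd_bel} breaks down---the quadratic form $(v_E')^2+E(v_E)^2$ becoming indefinite---is in fact more explicit than the paper's remark.
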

Since the proof of Theorem \ref{th:main_theorem_bound} is so similar to that of Theorem \ref{th:main_theorem} we omit it, although we point out the following:
\begin{itemize}
\item The condition \eqref{eq:extra} is also necessary for the proof of Theorem \ref{th:main_theorem} when $E > 0$, but holds automatically because $v_E(x)$ is real for all $x$ and in that case $i \sqrt{E}$ is purely imaginary. When $E < 0$, $i \sqrt{E} = i \sqrt{-|E|} = - \sqrt{|E|}$ is purely real.
\item To see that the root $E^* < 0$ must be real, note that $\frac{\Theta(z)}{\de_z \Theta(z)}$ is real for real and negative $z$. and hence the map $\Psi$ \eqref{eq:psi_z} actually maps $\field{R} \rightarrow \field{R}$ in this case.
\end{itemize}
Corollaries \ref{cor:imag_asymp} and \ref{cor:summary} directly generalize to this context. Since the results and proofs are identical after subtituting $- \sqrt{|E|}$ for $i \sqrt{E}$, we omit them.

We finally remark on the case $E = 0$.
\begin{remark} \label{rem:E=0}
The case $E = 0$ requires a specialized analysis which we do not undertake here. To see why our methods do not generalize, note for example that with $\Theta(z)$ defined by \eqref{eq:theta}, the derivative $\de_z \Theta(E)$ which appears in the definition of $\Psi$ \eqref{eq:psi_z}, and then in the formula for $z^*$ \eqref{eq:asymp}, is not defined when $E = 0$.
\end{remark}

\subsection{Edge states persist under truncation away from the edge} \label{sec:gen_edge}

In recent years, bound states which decay away from the physical edge of a periodic structure known as edge states have attracted significant attention, see e.g. \cite{fefferman_leethorp_weinstein_memoirs,2017FeffermanLee-ThorpWeinstein_2,2018FeffermanWeinstein,2019Drouot_2,2018DrouotFeffermanWeinstein_pre,2019Lee-ThorpWeinsteinZhu,2019DrouotWeinstein}. These works generally assume the existence of a semi-infinite periodic structure adjoining the edge. Without loss of generality we assume the semi-infinite periodic structure extends in the positive $x$ direction, and consider the case where this adjoining semi-infinite periodic structure is truncated at $x = M$ where $M$ is large.

Specifically, we consider the model equation \eqref{eq:Schro} as before, but replace the regularity and translation symmetry assumption (Assumption \ref{as:V_assump}) by the following.
\begin{assumption}[Regularity and translation symmetry of $V$] \label{as:V_assump_2}
We first assume $V$ is smooth: $V(x) \in C^\infty(\field{R})$. Then, we assume that $V$ can be written
\begin{equation}
    V(x) = \begin{cases} V_{\rm per}(x) + V_{\rm def}(x) & x \geq 0 \\ 0 & x < 0 \end{cases},
\end{equation}
where $V_{\rm per}$ is periodic, i.e. $V_{\rm per}(x+1) = V_{\rm per}(x)$ for all $x \geq 0$, and $V_{\rm def}(x)$ is compactly supported, i.e. there exists $\rho \geq 0$ such that 
\begin{equation} \label{eq:rho_again}
    x \geq \rho \implies V_{\rm def}(x) = 0.
\end{equation}
\end{assumption}
Just as in \eqref{eq:bound}, we suppose that $H$ has a bound state $\Phi(x) \in L^2(\field{R})$ with negative energy $E < 0$,  and consider the operator
\begin{equation} \label{eq:H_trunc_again}
    H_{\rm trunc} := D_x^2 + V_{\rm  trunc}(x)
\end{equation}
where
\begin{equation}
    V_{\rm trunc}(x) := \begin{cases} V(x) & - \infty \leq x \leq M \\ 0 & \phantom{-} M < x. \end{cases}
\end{equation}
Following the proof in the case of a regular defect state when parity symmetry does not hold (see Section \ref{sec:gen_no_parity}), and adopting the conventions in Section \ref{sec:trunc_bdstates}, we have that $H_{\rm trunc}$ has a bound state with energy $E^* = E + O(e^{- 2 k M})$, where the first-order correction can be computed explicitly in terms of solutions of the ODEs \eqref{eq:eq_nosymmetry_phi_non-zero}-\eqref{eq:eq_nosymmetry_phiprime_non-zero} as in Corollary \ref{cor:correction_noparity}. Note that the bound state of the truncated structure may not be exponentially small at the truncation, see \cite{2018ThickeWatsonLu}.

\begin{remark}
Note that the operator $H$ \eqref{eq:Schro} with $V$ as in Assumption \ref{as:V_assump_2} cannot have positive energy bound states since solutions of the eigenequation \eqref{eq:bound} with $E > 0$ cannot decay as $x \rightarrow - \infty$.
\end{remark}

We close this section by remarking on one further immediate generalization of our result, and on one potential generalization which does not follow immediately from our methods.

\begin{remark} \label{rem:half-line}
Our results would generalize easily to the setting of Schr\"odinger operators on the half-line. Specifically, we can consider the operator \eqref{eq:Schro} acting on $L^2(0,\infty)$, where $V(x)$ satisfies Assumption \ref{as:V_assump_2}, while imposing a Robin boundary condition at $x = 0$. Assume $H$ has a bound state with eigenvalue $E$. Then the operator $H_{\rm trunc}$, where the potential $V(x)$ is set equal to zero for $x \geq M$, would have a resonance $z^*$ satisfying $z^* - E = O(e^{- 2 k M})$ whenever $E > 0$, or an eigenvalue $E^*$ satisfying $E^* - E = O(e^{- 2 k M})$ whenever $E < 0$.
\end{remark}

\begin{remark} \label{rem:quasi-1d}
Our result could potentially be generalized to the setting of quasi-one-dimensional truncated structures: $d > 1$ dimensional structures which are invariant under translations in $d - 1$ directions. Such a structure can confine waves along $d-1$-dimensional defect regions. When $d = 2$ or $3$, such regions are known as line defects and plane defects, respectively. 

If the structure is truncated in the direction orthogonal to the defect region, we expect that the truncated structure will support a resonance exponentially close to the defect state eigenvalue. Imposing Bloch-periodicity of the wave-function in all directions orthogonal to the direction of truncation, the condition for a resonance reduces to a condition on the wave-function along a quasi-one-dimensional strip. However, unless the strip is \emph{exactly} one-dimensional, our methods (being ODE-based) cannot be directly applied. 

An example of an operator where the reduced problem would be precisely one-dimensional is the operator $- \de_{x}^2 - \de_{y}^2 + V_{per}(x) + V_{def}(x) + W(y)$, where $V_{per}, V_{def},$ and $W$ are real functions, with $V_{per}$ periodic and $V_{def}$ having compact support. By expanding the solution in a basis of eigenfunctions of $- \de_{y}^2 + W(y)$, the reduced problem for a resonance becomes precisely one-dimensional. Since this model seems unrealistic as a model of a structure with a localized defect, we do not consider this case further.
\end{remark}


\section{Conclusions and perspectives}
\label{sec:conjecture}

In this work we have studied the resonances created when periodic structures in one dimension hosting defect states are truncated far from the defect. Our results are restricted to one dimension because our methods rely on ODE theory, but there is no reason to expect that our results will not generalize to higher dimensions. Similarly, although our results are restricted to structures which are periodic away from a defect region because our methods rely on Floquet theory, we also expect our results will generalize to non-periodic systems under appropriate spectral gap assumptions. 

We can formulate a general conjecture regarding the resonances produced when a structure hosting a bound state is truncated away from the bound state maximum. Note that the existence of defect states in dimensions higher than $1$ has been established across various models, see e.g. \cite{1976Simon_2,1986DeiftHempel,1997FigotinKlein,Figotin-Klein:98,Hoefer-Weinstein:11}.

\begin{conjecture*}
In $\field{R}^d$, where $d$ is an arbitrary positive integer, let $H$ denote a Schr\"odinger operator $H := - \Delta + V$, where $V(\vec{x})$ has unbounded support. Assume that $H$ hosts a bound state $\Phi$ whose eigenvalue $E$ is isolated from the rest of the spectrum, i.e. $\sigma(H) \cap B_{\rho}(E) = \{E\}$, where $B_{\rho}(E)$ denotes a ball centered at $E$ with positive radius $\rho > 0$. Let $\vec{\mu} := \inty{\field{R}^d}{}{ \vec{x} | \Phi(\vec{x}) |^2 }{\vec{x}}$ denote the center of mass of the bound state. Define $V_{trunc}(\vec{x})$ by
\begin{equation}
    V_{trunc}(\vec{x}) = \begin{cases} V(\vec{x}) & \vec{x} \in B_M(\vec{\mu}) \\ 0 & \vec{x} \notin B_M(\vec{\mu}), \end{cases}
\end{equation}
where $M > 0$, and define $H_{trunc} := - \Delta + V_{trunc}$. Then, for sufficiently large $M$, and assuming that $\Phi$ is non-zero for some $\vec{x} \notin B_M(\vec{\mu})$ (if not, $\Phi$ is a bound state of $H_{trunc}$), $H_{trunc}$ will have a resonance $z$ in the lower half of the complex plane such that $|z - E|$ is exponentially small in $M$.
\end{conjecture*}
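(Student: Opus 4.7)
The plan is to replace the one-dimensional Floquet/ODE machinery of Sections \ref{sec:bound_de_z_thet}--\ref{sec:bound_de_z_2_thet} by an abstract Feshbach--Grushin reduction, and to extract the fixed-point structure of Section \ref{sec:fixed} from operator estimates alone. The one ingredient that survives unchanged in arbitrary dimension is exponential decay of the bound state: since by hypothesis $E$ is isolated from the rest of $\sigma(H)$, a Combes--Thomas argument yields constants $C,k>0$, with $k$ determined by the spectral gap $\rho$, such that $|\Phi(\vec{x})| + |\nabla \Phi(\vec{x})| \leq C e^{-k|\vec{x}-\vec{\mu}|}$. This plays the role of the Floquet exponential bound \eqref{eq:sols_2}.

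Step one is to construct an approximate resonant state by cutoff. Let $\chi_M \in C^\infty_c(\field{R}^d)$ equal $1$ on $B_M(\vec{\mu})$ and vanish outside $B_{M+1}(\vec{\mu})$, and set $\Phi_M := \chi_M \Phi / \|\chi_M \Phi\|$. Since $V_{\rm trunc} = V$ on $\text{supp } \chi_M$,
\begin{equation}
(H_{\rm trunc} - E) \Phi_M = \frac{1}{\|\chi_M \Phi\|}\left( -2 \nabla \chi_M \cdot \nabla \Phi - (\Delta \chi_M) \Phi \right),
\end{equation}
which is supported in the annulus $M \leq |\vec{x}-\vec{\mu}| \leq M+1$ and hence has $L^2$ norm $O(e^{-kM})$ by the Combes--Thomas estimate. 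Thus $\Phi_M$ is a quasi-mode for $H_{\rm trunc}$ with exponentially small defect.

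Step two is to promote this quasi-mode to an actual resonance via a Grushin problem. Since $V_{\rm trunc}$ has compact support, $H_{\rm trunc}$ is a black-box perturbation of $-\Delta$ in the sense of Sj\"ostrand--Zworski (see Chapter 4 of \cite{DyatlovZworski}), so $(H_{\rm trunc}-z)^{-1}$ admits a meromorphic continuation across $(0,\infty)$ into the lower half-plane (or a logarithmic cover when $d$ is even). Setting
\begin{equation}
\mathcal{P}(z) := \begin{pmatrix} H_{\rm trunc} - z & R_- \\ R_+ & 0 \end{pmatrix}, \quad R_- c := c\,\Phi_M, \quad R_+ \psi := \langle \psi, \Phi_M \rangle,
\end{equation}
one must verify that $\mathcal{P}(z)$ is invertible uniformly for $z$ in a fixed neighborhood of $E$; the resonances of $H_{\rm trunc}$ near $E$ are then precisely the zeros of the scalar Schur complement $E_{-+}(z)$. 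Pairing the quasi-mode identity with $\Phi_M$ gives $E_{-+}(E) = O(e^{-kM})$, while $\partial_z E_{-+}(E) = -1 + O(e^{-2kM})$, so Rouch\'e's theorem (or the contraction argument of Lemma \ref{lem:contract} applied to $z \mapsto z - E_{-+}(z)/\partial_z E_{-+}(E)$) produces a unique resonance $z^*$ in the disk $\{|z-E| \leq M^{-2} e^{-kM}\}$ with $|z^* - E| = O(e^{-kM})$.

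The principal obstacle is establishing the $O(1)$ invertibility of $H_{\rm trunc} - z$ on the orthogonal complement of $\Phi_M$: in one dimension this corresponded to the explicit lower bound $|\partial_z \Theta(E)| \geq C e^{kM}$ of Lemma \ref{lem:bound_de_z_thet}, proved via Floquet theory, whereas in higher dimensions one must argue abstractly. A natural route is to compare the spectral projections of $H$ and $H_{\rm trunc}$ near $E$ through the second resolvent identity, exploiting exponential decay of $\Phi$ to control $(V - V_{\rm trunc})(H-z)^{-1}$ when acting on states supported near $\vec{\mu}$; this in effect transfers the spectral gap of $H$ to $H_{\rm trunc}$ on the complement of the quasi-mode. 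Secondary difficulties include handling the logarithmic cover of the resolvent when $d$ is even and extending the asymptotic expansion \eqref{eq:asymp} past leading order by iterating the Grushin reduction; neither alters the qualitative conclusion.
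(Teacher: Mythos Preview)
This statement is a \emph{conjecture} in the paper, not a theorem: the paper offers no proof, only the heuristic remarks following the conjecture pointing to Combes--Thomas/Agmon decay and to the ``quasimodes to resonances'' machinery of Chapter~7 of \cite{DyatlovZworski}. Your proposal is precisely an attempt to implement that suggested program, and the overall architecture --- exponentially good quasi-mode $\Phi_M$, black-box meromorphic continuation, Grushin reduction to a scalar $E_{-+}(z)$, then Rouch\'e or the contraction of Lemma~\ref{lem:contract} --- is the natural higher-dimensional analogue of the matching argument of Sections~\ref{sec:fixed}--\ref{sec:bound_de_z_2_thet}. So there is no ``paper's proof'' to compare against; you are attacking an open problem along the line the authors themselves indicate.

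The gap you flag, however, is genuine and is exactly why the statement remains conjectural. The claimed estimates $E_{-+}(E)=O(e^{-kM})$ and $\partial_z E_{-+}(E)=-1+O(e^{-2kM})$ already \emph{presuppose} uniform-in-$M$ invertibility of $\mathcal{P}(z)$; without that there is no Schur complement to expand. Your proposed cure via the second resolvent identity faces two concrete obstructions. First, $(H-z)^{-1}$ has a pole at $z=E$, so one must pass to the reduced resolvent of $H$ on $\mathrm{span}(\Phi)^\perp$; but that is a bounded operator on $L^2(\field{R}^d)$, whereas for $E>0$ the Grushin problem for $H_{\rm trunc}$ lives on the weighted/outgoing spaces of black-box scattering, and there is no evident intertwining between the two settings. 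Second, $V-V_{\rm trunc}$ has \emph{unbounded} support (it equals $V$ on all of $\field{R}^d\setminus B_M(\vec{\mu})$), so it is not a compact perturbation in any scattering sense and the resolvent comparison does not close. In one dimension both issues were bypassed by the explicit Floquet representation \eqref{eq:sols_2}, which gave direct pointwise control of the growing solution $v_E$ and hence of the reduced resolvent; no substitute for this is available in $d\geq 2$. Note also that the generic quasimode-to-resonance theorems (Tang--Zworski, Stefanov) only locate a resonance within a \emph{power} of the quasi-mode error, with the exponent degraded by a priori resolvent bounds that themselves grow with the diameter $M$ of $\mathrm{supp}\,V_{\rm trunc}$ --- so one cannot sidestep the Grushin bound and still obtain the exponentially small distance $|z^*-E|$ the conjecture demands.
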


Although the proofs in the present work rely heavily on ODE theory, there is good reason to expect that such a result can be proved. First, exponential decay of bound states in gaps of essential spectrum in arbitrary dimensions follows under general hypotheses from exponential decay of the Green's function (see e.g. \cite{1965Agmon,1973CombesThomas}). Second, in geometric scattering theory there are semiclassical methods developed for proving that classical orbits on manifolds or exterior to obstacles lead to resonances in the absence of any periodic background potential (see, e.g. Chapter 7 ``From quasimodes to resonances'' of Dyatlov-Zworski \cite{DyatlovZworski}).  The methods developed there generalize the fixed point ODE tools used to demonstrate how eigenstates become resonances in the truncated harmonic oscillator that we have applied here in the periodic setting. To study the defects to resonances question in higher dimensional truncated lattice problems, one needs to develop similar tools to the present context, with the existence of a bound state of the infinite structure taking the place of the existence of trapped classical orbits in the semiclassical problem.

\printbibliography

\end{document}